\numberwithin{equation}{section}
\newtheorem{thm}{Theorem}[section]
\newtheorem{prop}[thm]{Proposition}
\newtheorem{lem}[thm]{Lemma}
\newtheorem{cor}[thm]{Corollary}
\newcommand{\nn}{\nonumber}
\theoremstyle{definition}
\newtheorem{Example}[thm]{Example}
\newtheorem{rem}[thm]{Remark}
\newtheorem{Notation}[thm]{Notation}
\newcommand{\ket}[1]{{| #1 \rangle}}
\newcommand{\C}{{\mathbb C}}
\newcommand{\Z}{{\mathbb Z}}
\newcommand{\cC}{{\mathcal C}}
\newcommand{\E}{{\mathcal E}}
\newcommand{\cH}{\mathcal H}
\newcommand{\cK}{\mathcal{K}}
\newcommand{\GS}{\mathfrak{S}}
\newcommand{\gl}{\mathfrak{gl}}
\newcommand{\ve}{\varepsilon}
\newcommand{\bE}{\mathbf{E}}
\newcommand{\bG}{\mathbf{G}}
\newcommand{\bK}{\mathbf{K}}
\newcommand{\bu}{\mathbf{u}}
\newcommand{\bv}{\mathbf{v}}
\newcommand{\bx}{\mathbf{x}}
\newcommand{\by}{\mathbf{y}}
\newcommand{\bz}{\mathbf{z}}
\newcommand{\ssB}{\textsf{B}}
\newcommand{\ssD}{\textsf{D}}
\newcommand{\End}{\mathop{\rm End}}
\newcommand{\id}{{\rm id}}
\renewcommand{\Im}{\mathop{\rm Im}}
\newcommand{\Res}{\mathop{\rm res}}
\newcommand{\Sym}{\mathop{\mathrm{Sym}}}
\newcommand{\mc}{\mathcal}
\newcommand{\pdeg}{\mathop{\rm pdeg}}
\newcommand{\cont}[2]{\contraction[1ex]{}{#1}{}{#2} #1 #2}
\newcommand{\ft}{\tilde{f}}
\newcommand{\ovu}{\overline{u}}
\newcommand{\ovv}{\overline{v}}
\newcommand{\bbar}[1]{\bar{\bar{#1}}}
\begin{document}
\allowdisplaybreaks

\newcommand{\arXivNumber}{2112.14631}

\renewcommand{\PaperNumber}{051}

\FirstPageHeading

\ShortArticleName{Quantum Toroidal Comodule Algebra of Type $A_{n-1}$ and Integrals of Motion}

\ArticleName{Quantum Toroidal Comodule Algebra of Type $\boldsymbol{A_{n-1}}$\\ and Integrals of Motion}

\Author{Boris FEIGIN~$^{\rm ab}$, Michio JIMBO~$^{\rm c}$ and Evgeny MUKHIN~$^{\rm d}$}

\AuthorNameForHeading{B.~Feigin, M.~Jimbo and E.~Mukhin}

\Address{$^{\rm a)}$~National Research University Higher School of Economics,\\
\hphantom{$^{\rm a)}$}~20 Myasnitskaya Str., Moscow, 101000, Russia}
\Address{$^{\rm b)}$~Landau Institute for Theoretical Physics, 1a Akademika Semenova Ave.,\\
\hphantom{$^{\rm b)}$}~Chernogolovka, 142432, Russia}
\EmailD{\href{mailto:bfeigin@gmail.com}{bfeigin@gmail.com}}

\Address{$^{\rm c)}$~Department of Mathematics, Rikkyo University, Toshima-ku, Tokyo 171-8501, Japan}
\EmailD{\href{mailto:jimbomm@rikkyo.ac.jp}{jimbomm@rikkyo.ac.jp}}

\Address{$^{\rm d)}$~Department of Mathematics, Indiana University Purdue University Indianapolis,\\
\hphantom{$^{\rm d)}$}~402 N.~Blackford St., LD 270, Indianapolis, IN 46202, USA}
\EmailD{\href{mailto:emukhin@iupui.edu}{emukhin@iupui.edu}}

\ArticleDates{Received March 02, 2022, in final form June 27, 2022; Published online July 07, 2022}

\Abstract{We introduce an algebra $\mathcal{K}_n$ which has a structure of a left comodule over the quantum toroidal algebra of type $A_{n-1}$. Algebra $\mathcal{K}_n$ is a higher rank generalization of $\mathcal{K}_1$, which provides a uniform description of deformed $W$ algebras associated with Lie (super)al\-gebras of types BCD. We show that $\mathcal{K}_n$ possesses a family of commutative subalgebras.}

\Keywords{quantum toroidal algebras; comodule; integrals of motion}

\Classification{81R10; 81R12; 17B69; 17B80}

\section{Introduction}%\label{sec:intro}

Integrable systems originated in quantum toroidal algebras have been drawing a lot of attention in recent years,
see \cite{AO,FJMM1, FJM2,FKSW,FOS1, FOS2,GG,KZ, LV1, LV2},
to name a few.
In this paper we continue the study launched in~\cite{FJMV} concerning
deformed $W$ algebras and
a quantum toroidal version of integrable models with reflections in the spirit of \cite{Sk}.

The work \cite{FJMV} is based on a new algebra $\mc K_1$ generated by
current $E(z)$, Heisenberg half-cur\-rents~$K^{\pm}(z)$, and a central element $C$.
In various representations the current $E(z)$ recovers the fundamental current of %all
known deformed $W$ algebras of types BCD as well as their supersymmetric analogs.
It was shown that $\mc K_1$ has a family of commuting elements $\{\mathbf{I}_N\}_{N=1}^\infty$ called integrals of motion.
The element $\mathbf{I}_N$ is written as an $N$-fold integral of the product
$\mathbf{E}(z_N)\cdots \mathbf{E}(z_1)$ with the explicit elliptic kernel of~\cite{Ru}.
Here $\mathbf{E}(z)$ is the dressed current of the form $\mathbf{E}(z)=E(z)\mathbf{K}^+(z)^{-1}$,
where $\mathbf{K}^+(z)$ is given in terms of $K^+(z)$. In the representations corresponding to deformed~$W$ algebras,
the integrals of motion $\mathbf{I}_N$ are deformations of local integrals of motion in the $W$ algebras, see \cite{FF}.

In the present paper we generalize this construction by introducing algebras $\mc K_n$ for $n>1$.
The algebra $\mc K_n$ depends on parameters $q_1,q_2=q^2$, and is generated by currents
$E_1(z),\dots,E_n(z)$, Heisenberg half-currents $K_1^{\pm}(z),\dots, K_n^\pm(z)$, and a central element $C$.
We show that it has the following key properties:\pagebreak
\begin{itemize}\itemsep=1pt
\item Algebra $\mc K_n$ has
essentially the same size as the Borel subalgebra of quantum toroidal $\mathfrak{gl}_n$ algebra $\mc E_n$;
see Remark \ref{rem:size}.
\item For $m<n$, algebra $\mc K_m$ can be obtained as a subalgebra of $\mc K_n$ using the fusion procedure
in the same way as $\mc E_m$ is found as a subalgebra of $\mc E_n$ in \cite{FJMM3};
see Proposition \ref{prop:fusion}.
\item Algebra $\mc K_n$ is a left comodule over quantum toroidal algebra $\mc E_n$;
see Theorem \ref{thm:comod}.
\item Algebra $\mc K_n$ has two ``boundary" modules $\mathbb{F}^D$ of level $C=q^{-1}$ and $\mathbb{F}^B$ of level $C=q^{1/2}$;
see Lemmas \ref{lem:typeCD} and~\ref{lem:typeB}.
\item Algebra $\mc K_n$ has a family of commuting integrals of motion $\{\mathbf{I}_N\}_{N=1}^\infty$,
such that $\mathbf{I}_N$ is an $nN$-fold integral of a current given by an ordered product of the form
$\prod_{1\le i \le n}^{\curvearrowleft}\prod_{1\le a\le N}^{\curvearrowleft}\mathbf{E}_i(z_a)$ together
with the kernel of \cite{KS}; see \eqref{IM1} and Theorem \ref{thm:main}.
Here the dressed currents have the form $\mathbf{E}_i(z)=\mathbf{K}^-_i(z)^{-1} E_i(z)$, where $\mathbf{K}^-_i(z)$ is given in terms of
$K^-_i(z)$.\footnote{We change the convention slightly from \cite{FJMV}.}
\end{itemize}

Here we should make a disclaimer: the relations of $\mc K_n$ involve infinite sums, so that
an appropriate completion is necessary. The same is true with the fusion procedure.
Also the comodule map is defined at the level of a class of representations called admissible ones.

These properties ensure that $\mc K_n$ has a rich representation theory.
At least, we have representations of the form $V\otimes W$, where $W$ is one of the two boundary modules and
$V$ is any admissible $\mc E_n$ module.
Here by an admissible module we mean a homogeneously graded module with the degree bounded from above;
for example any tensor product of Fock modules (or, more generally, modules with highest weight
with respect to the ``rotated'' set of generators) is admissible.
Then, under a mild restriction on parameters $q_1$, $q_2$ and the level $C$,
in any $\mc K_n$ module we have an integrable system given by action of $\{\mathbf{I}_N\}$. This is the principal result of this paper.
For $n=1$, this system is related to the system with reflections, see~\cite[Section 5.1]{FJMV}, and the Bethe ansatz for
it is studied in \cite{LV2} in the setting of affine Yangians.
Although the expressions of $\{\mathbf{I}_N\}$ are non-local, we shall sometimes refer to them as
``local'' integrals of motion, since in the case $n=1$ they are deformations of local ones as already mentioned.

We do not completely understand the nature of algebras~$\mc K_n$.
The~$\mc K_n$ algebras we discuss here are of A type.
Similar algebras can be defined in the same way for arbitrary simply laced types,
though the construction of integrals of motion is more challenging.
At first glance, algebra~$\mc K_n$ bears resemblance to $\imath$quantum groups;
formula \eqref{delta-E} for comodule structure is reminiscent of the Chevalley generators~\cite[formula~(5.1)]{Ko},
and the defining rela\-tions~\mbox{\eqref{KK1}--\eqref{Serre_3}} of~$\cK_n$
look similar to the Drinfeld realization given in~\cite[Theorem~5.1]{LW}.
However, the details seem to be very different.
For example, the relations in $\cK_n$ are invariant under sca\-ling $z\to cz$ but
such homogeneity does not exist in those of \cite{LW}.
The current appearing in the residues in the right-hand side of~(5.5) in~\cite{LW}
is commutative, whereas our current $K_i^-(z)K_i^+\big(C^2z\big)$ in~\eqref{EE} is not.
The $\imath$quantum groups are coideals in the corresponding quantum groups, while
we are not aware of inclusion of~$\mc K_n$ to $\mc E_n$.
We do not know if $\imath$quantum groups have interesting families of integrals of motion in general.
Natural candidates which could be compared to~$\mc K_n$ are the reflection algebras and, in particular,
quantum twisted Yangians, see~\cite{MRS}.
The quantum twisted Yangians are coideal subalgebras behind integrable systems with reflections.
However, the exact relation to~$\mc K_n$ algebras is unclear. The quantum twisted Yangians are given in terms of the
$R$-matrix realization (as opposed to the Drinfeld-type rea\-li\-za\-tion of~$\mc K_n$)
and the formulas for the generators of the coideal are quadratic in generators of quantum group.\looseness=1

In quantum toroidal algebras $\mc E_n$, the integrals of motion originate from the standard construction of transfer matrices
and depend on $n$ arbitrary parameters.
They commute for a simple reason following from the Yang--Baxter equation.
Then a computation provides the expression for the integrals of motion in the form of integrals.
The integrals of motion in the $\mc K_n$ algebra are given virtually by the same formula as in $\mc E_n$,
except that one of the parameters (the elliptic nome) is fixed by the central element $C$.
But we do not have any general reason for their commutativity; we simply check it directly.

It is known that in type A there is a natural duality of such models of $(\mathfrak{gl}_m, \mathfrak{gl}_{n})$ type, see~\cite{FJM3}.
Under that duality, the algebras $\mc E_m$ and $\mc E_{n}$ act on the same space in such a way that the screening operators
for one algebra are given in terms of generating currents of the other.
Furthermore the integrals of motion of both algebras commute with each other.
For $\mc K_1$ acting in a representation, we have a system of screening operators, see~\cite{FJMV}.
We expect that the algebras generated by the corresponding screening currents also possess a commutative family
which should be called ``non-local" integrals of motion, and that ``local'' and ``non-local'' integrals of motion commute.

Similarly, we expect that algebras $\mc K_n$ acting in representations commute with a family of screening operators.
Moreover the algebra generated by the screening currents does not depend on $n$.
For the case of the tensor product of $\ell$ Fock $\mc E_n$ modules with a boundary
$\mc K_n$ module, we anticipate $\ell+1$ screening currents each given as a sums of $n$ vertex operators.
The cor\-res\-pon\-ding ``non-local'' integrals of motion should commute with the ``local''
integrals of motion $\{\mathbf{I}_N\}$ constructed in this paper.
We plan to address this issue in future publications.

The text is organized as follows.

In Section \ref{sec:prel} we introduce our convention about the quantum toroidal algebra $\E_n$ of type $\gl_n$.
In Section \ref{sec:Kn} we define algebra $\cK_n$. Working with admissible representations, we rephrase the Serre relations as
zero conditions (called wheel conditions) on matrix coefficients of products of generating currents.
We show that on admissible representations,
$\cK_n$ may be viewed as a left comodule over $\E_n$.
We use this fact to construct representations starting with elementary ones.
In Section \ref{sec:IM} we introduce integrals of motion. They are integrals of products of generating currents of $\cK_n$
obtained by a dressing procedure.
The kernel functions entering integrals of motion are precisely the same as those used for those of $\E_{n}$.
We then prove that integrals of motion mutually commute by direct computation.

For simplicity of presentation, we treat the case $n\ge3$ in the body of the text.
We shall mention the modifications necessary for $n=1,2$ in Section \ref{sec:n12}.

In Section \ref{sec:fusion} we show that,
for any $k=1,\dots,n-1$, algebra $\cK_n=\cK_n(q_1,q_2,q_3)$ with parameters $q_1$, $q_2$, $q_3$
contains mutually commuting subalgebras $\cK_k(\bar q_1,\bar q_2,\bar q_3)$ and $\cK_{n-k}(\bbar q_1,\bbar q_2,\bbar q_3)$
with appropriate parameters $\bar q_i$, $\bbar q_i$.

In Appendix \ref{sec:identity}, we give a proof of the theta function identities used in the commutativity of integrals of motion.

\begin{Notation}
Throughout the text we fix parameters $q^{1/2},d^{1/2}\in\C^{\times}$ and define
\begin{align*}
&q_1=q^{-1}d,\qquad q_2=q^2,\qquad q_3=q^{-1}d^{-1},
\end{align*}
so that $q_1q_2q_3=1$.
We assume that $q_1^iq_2^j=1$ for $i,j\in\Z$ implies $i=j=0$.
\end{Notation}

For a positive integer $N$, we write $\delta^{(N)}_{i,k}=1$ if $i\equiv k\bmod N$ and $\delta^{(N)}_{i,k}=0$ otherwise.

We use the standard symbols for infinite products
\begin{align*}
&(z_1,\dots,z_r;p)_\infty=\prod_{i=1}^r\prod_{k=0}^\infty\big(1-z_i p^k\big),
\qquad
\Theta_p(z)=\big(z,pz^{-1},p;p\big)_\infty.
\end{align*}
We use also the formal series $\delta(z)=\sum_{k\in\Z}z^k$.\pagebreak

\section{Preliminaries}%\label{sec:preliminaries}
\label{sec:prel}
In this section we fix our convention regarding the quantum toroidal algebra $\E_n$ of type $\gl_n$.

\subsection[Quantum toroidal algebra E\_n]{Quantum toroidal algebra $\boldsymbol{\E_n}$}%\label{sec:EnDef}

Fix a positive integer $n\ge3$.
We define $g_{i,j}(z,w)$, $G_{i,j}(x)$ for $i,j\in\Z/n\Z$ as follows\vspace{-1ex}
\begin{gather*}
g_{i,i}(z,w)=z-q^2w, \qquad G_{i,i}(x)=q^2\frac{1-q^{-2}x}{1-q^2x},
\\
g_{i,i\pm1}(z,w)=z-q^{-1}d^{\pm1}w, \qquad G_{i,i\pm 1}(x)=q^{-1}\frac{1-qd^{\pm1}x}{1-q^{-1}d^{\pm1}x},
\\
g_{i,j}(z,w)=z-w, \qquad G_{i,j}(x)=1\qquad \text{if}\quad j\not \equiv i,i\pm1.
\end{gather*}
Here $G_{i,j}(x)$ should be understood as power series expansion in $x$.
As rational function we have
\begin{gather*}
G_{i,j}(w/z)=-\frac{g_{j,i}(w,z)}{d_{i,j}g_{i,j}(z,w)}=G_{j,i}(z/w)^{-1},
\end{gather*}
where $d_{i,j}=d^{\pm 1}$, $i\equiv j\pm1$, and $d_{i,j}=1$ otherwise.

Let $P$ be a free $\Z$ module with basis $\{\ve_i\mid i\in\Z/n\Z\}$, equipped with a symmetric bilinear form $(~,~)\colon P\times P\to\Z$
such that $\ve_i$ are orthonormal. We write $\bar\alpha_i=\ve_{i-1}-\ve_{i}$.

The quantum toroidal algebra $\E_n$ of type $\gl_n$ is a unital associative algebra generated by
\begin{gather*}
e_{i,k},\quad f_{i,k},\quad h_{i,r},\quad i\in\Z/n\Z,\ k\in \Z,\ r\in\Z\backslash\{0\},
\qquad q^{h},\quad h\in P, \qquad C^{\pm1}.
\end{gather*}
In terms of the generating series
\begin{gather*}
e_i(z)=\sum_{k\in\Z}e_{i,k}z^{-k},\qquad
f_i(z)=\sum_{k\in\Z}f_{i,k}z^{-k},
\\
\psi^{\pm}_i(z)=\psi_{i,0}^{\pm} \exp\bigg({\pm}\big(q-q^{-1}\big)\sum_{r>0}h_{i,\pm r}z^{\mp r}\bigg),\qquad \psi_{i,0}^\pm=q^{\pm\bar\alpha_i},
\end{gather*}
the defining relations read as follows
\begin{gather*}
q^hq^{h'}=q^{h+h'},\quad h,h'\in P,\qquad q^0=1,\qquad \text{$C$ is central},
\\
q^he_i(z)q^{-h}=q^{(h,\bar \alpha_i)}e_i(z),\quad
q^hf_i(z)q^{-h}=q^{-(h,\bar \alpha_i)}f_i(z),\quad
q^h\psi^\pm_i(z)q^{-h}=\psi_i^\pm(z),\quad h\in P,
\\[1ex]
\psi^\pm_i(z)\psi^\pm_j (w) = \psi^\pm_j(w)\psi^\pm_i (z),
\\
\psi^+_i(z)\psi^-_j (w)=\psi^-_j(w)\psi^+_i (z) G_{i,j}(C w/z)^{-1}G_{i,j}\big(C^{-1} w/z\big),
\\[1ex]
\psi_i^+(z)e_j(w)=e_j(w)\psi_i^+(z) G_{i,j}\big(C^{-1}w/z\big),\qquad
e_j(w)\psi_i^-(z)=\psi_i^-(z)e_j(w) G_{j,i}(z/w),
\\
\psi_i^+(z)f_j(w)=f_j(w)\psi_i^+(z) G_{i,j}(w/z)^{-1},\qquad
f_j(w)\psi_i^-(z)=\psi_i^-(z)f_j(w) G_{j,i}(C z/w)^{-1},
\\[1ex]
[e_i(z),f_j(w)]=\frac{\delta_{i,j}}{q-q^{-1}}
(\delta\bigl(C{w}/{z}\bigr)\psi_i^+(w)
-\delta\bigl(C{z}/{w}\bigr)\psi_i^-(z)),
\\
[e_i(z),e_j(w)]=0, \qquad [f_i(z),f_j(w)]=0, \qquad i\not\equiv j,j\pm1,
\\
d_{i,j}g_{i,j}(z,w)e_i(z)e_j(w)+g_{j,i}(w,z)e_j(w)e_i(z)=0,
\\
d_{j,i}g_{j,i}(w,z)f_i(z)f_j(w)+g_{i,j}(z,w)f_j(w)f_i(z)=0,
\\[1ex]
\Sym_{{z_1,z_2}}[e_i(z_1),[e_i(z_2),e_{i\pm1}(w)]_q]_{q^{-1}}=0,
\\
\Sym_{{z_1,z_2}}[f_i(z_1),[f_i(z_2),f_{i\pm1}(w)]_q]_{q^{-1}}=0.
\end{gather*}
Here $[A,B]_p=AB-pBA$, and the symbol $\mathrm{Sym}$ stands for symmetrization:
\begin{gather*}
\Sym_{x_1,\dots,x_N}\ f(x_1,\dots,x_N) =\frac{1}{N!}
\sum_{\sigma\in\GS_N} f(x_{\sigma(1)},\dots,x_{\sigma{(N)}}).
\end{gather*}

The relations above imply in particular
\begin{gather}
[h_{i,r},h_{j,s}]=\delta_{r+s,0} \cdot a_{i,j}(r)\frac{C^r-C^{-r}}{q-q^{-1}},\qquad r,s\neq 0,\nn
\\
a_{i,j}(r)=\frac{1}{r}\frac{q^r-q^{-r}}{q-q^{-1}}
\big((q^r+q^{-r})\delta^{(n)}_{i,j}-d^r \delta^{(n)}_{i,j-1}-d^{-r}\delta^{(n)}_{i,j+1}\big).
\label{E-hh}
\end{gather}

Algebra $\E_n$ has a $\Z$ grading which we call homogeneous grading, given by
\begin{gather*}
\deg x_{i,r}=r,\quad x=e,f,h,\qquad \deg q^h=\deg C=0.
\end{gather*}

\subsection{Coproduct}%\label{sec:EnCopro}
Quite generally, let $\mc A$ be a $\Z$ graded algebra with a central element $C$.
The completion of $\mc A$ in the positive direction is the algebra $\tilde{\mc A}$, linearly spanned by products
of series of the form $\sum_{k=M}^\infty f_k g_k$, where $M\in \Z$, $f_k,g_k\in\mc A$ and $\deg g_k=k$.
We call an $\mc A$ module $V$ admissible if $C$ is diagonalizable,
and if $V$ is $\Z$ graded with finite dimensional components of degree bounded from above:
$V=\oplus_{k=-\infty}^{N} V_k$, where $V_k=\{v\in V\mid \deg v=k\}$,
$\dim V_k<\infty$.
The completion $\tilde {\mc A}$ acts on all admissible modules.

Let $\E_n\tilde \otimes \E_n$ be the tensor algebra $\E_n\otimes \E_n$ completed in the positive direction.
We use the topological coproduct $\Delta\colon \E_n\to \E_n\tilde \otimes \E_n$
\begin{gather*}
\Delta e_i(z)=e_i(C_2z) \otimes \psi^{-}_i(z)+ 1 \otimes e_i(z),
\\
\Delta f_i(z)=f_i(z)\otimes 1 +\psi^{+}_i(z)\otimes f_i(C_1z),
\\
\Delta \psi^{+}_i(z)=\psi^{+}_i(z)\otimes \psi^{+}_i(C_1z),
\\
\Delta \psi^{-}_i(z)=\psi^{-}_i(C_2z)\otimes \psi^{-}_i(z),
\\
\Delta x=x\otimes x,\qquad x=q^{h}, C.
\end{gather*}
Here $C_1=C\otimes 1$ and $C_2=1\otimes C$.

Later on we shall use the elements in the completed algebra $\tilde\E_n$
\begin{gather*}
\ft_i(z)=S^{-1}\bigl(f_i(z)\bigr) =-f_i\big(C^{-1}z\big)\psi_i^+\big(C^{-1}z\big)^{-1},
\end{gather*}
where $S$ denotes the antipode.
The $\tilde{f}_i(z)$'s satisfy the same quadratic relations and the Serre relations as do $e_i(z)$'s. In addition
\begin{gather}
d_{j,i}g_{j,i}(w,z)e_i(z)\ft_j(w)+g_{i,j}(z,w)\ft_j(w)e_i(z)\nn
\\ \qquad
{}=-\delta_{i,j}\frac{g_{i,i}(w,z)}{q-q^{-1}}
\bigl(\delta(w/z)
-\delta\big(C^2 z/w\big)\psi_i^-(z){\psi_i^+(C z)}^{-1}\bigr).\label{ftil2}
\end{gather}

\subsection{Fock modules}%\label{sec:Fock}

It is well known \cite{Sa} that $\E_n$ has admissible representations given in terms of vertex operators.
Consider a vector space with basis $\ket{\mathbf{m}}$ labeled by $n$ tuples of integers
$\mathbf{m}=(m_0,m_1,\dots,m_{n-1})\allowbreak\in\Z^n$. We define linear operators
${\rm e}^{\pm\epsilon_i}$ and $\partial_i$, $0\le i\le n-1$, by %setting
\begin{gather*}
{\rm e}^{\pm\epsilon_i}\ket{\mathbf{m}}=(-1)^{\sum_{s=0}^{i-1}m_s}\ket{\mathbf{m}\pm \mathbf{1}_i},\qquad
\partial_i\ket{\mathbf{m}}=m_i \ket{\mathbf{m}},
\end{gather*}
with $\mathbf{1}_i=(\delta_{i,j})_{0\le j\le n-1}$. We have
${\rm e}^{\epsilon_i}{\rm e}^{\epsilon_j}=-{\rm e}^{\epsilon_j}{\rm e}^{\epsilon_i}$ for $i\neq j$.
We set
\begin{gather*}
\mathbb{F}=\C[\{h_{i,-r}\}_{r>0,0\le i\le n-1}]\otimes \bigg(\bigoplus_{\mathbf{m}\in\Z^n}\C\ket{\mathbf{m}}\bigg).
\end{gather*}
We write operators $h_{i,\pm r}\otimes\id$, $\id\otimes {\rm e}^{\pm\epsilon_i}$ and $\id\otimes \partial_i$
simply as $h_{i,\pm r}$, ${\rm e}^{\pm\epsilon_i}$ and $\partial_i$. We extend the range of the suffix $i$ by periodicity,
e.g., $\partial_{i+n}=\partial_i$. Define a vertex operator which acts on $\mathbb{F}$
\begin{gather*}
V_i(z)={\rm e}^{-\epsilon_{i}}{\rm e}^{\epsilon_{i-1}}z^{\partial_{i-1}-\partial_i}d^{(\partial_{i-1}+\partial_i)/2}
\exp\bigg(\sum_{r>0}\frac{q^{-r}}{[r]}h_{i,-r}z^r\bigg)
\exp\bigg({-}\sum_{r>0}\frac{1}{[r]}h_{i,r}z^{-r}\bigg).
\end{gather*}

For $u\in\C^\times$, the following assignment gives $\mathbb{F}$ a structure of an admissible
$\E_n$ module
denoted by $\mathbb{F}(u)$,
\begin{gather*}
e_i(z)\mapsto u^{-\delta_{i,0}}z V_i(z),\qquad
\tilde{f}_i(z)\mapsto -q^{-1}u^{\delta_{i,0}}z\, {:}V_i(z)^{-1}{:},
\\
h_{i,r}\mapsto h_{i,r},\quad r\neq0,\qquad
q^{\varepsilon_i}\mapsto q^{\partial_i},\qquad C\mapsto q.
\end{gather*}
Here we use the usual normal ordering symbol to bring ${\rm e}^{\pm\epsilon_i}$, $h_{i,-r}$, $r>0$,
to the left and $\partial_i$, $h_{i,r}$, $r>0$, to the right, except that the order between
${\rm e}^{\pm\epsilon_i}$'s is kept unchanged.
The $\E_n$ module~$\mathbb{F}(u)$ is a direct sum of irreducible submodules, see \cite[Lemma 3.2]{FJM3}.

\section[Comodule algebra K\_n]{Comodule algebra $\boldsymbol{\cK_n}$}\label{sec:Kn}

In this section we introduce an algebra $\cK_n$,
and show that it has the structure of an $\E_n$ comodule in the sense to be made precise below.
Algebra $\cK_1$ was first introduced and studied in \cite{FJMV} in order to
give a uniform description of deformed $W$ algebras associated with simple Lie (super)algebras of types BCD.

\subsection[Algebra K\_n]{Algebra $\boldsymbol{\cK_n}$}%\label{sec:KnDef}
We introduce an algebra $\cK_n$ through generating series
\begin{gather*}
E_i(z)=\sum_{k\in\Z} E_{i,k}z^{-k},\qquad
K^{\pm}_i(z)=K^{\pm}_{i,0}\exp\biggl(\pm\big(q-q^{-1}\big)\sum_{\pm r>0} H_{i,r}z^{-r}\biggr),
\qquad i\in\Z/n\Z,
\end{gather*}
and an invertible central element $C$.
The defining relations read as follows
\begin{gather}
K_i^{\pm}(z)K_j^\pm(w)=K_j^\pm(w)K_i^{\pm}(z),
\label{KK1}
\\
K_i^+(z)K^-_j(w)=K^-_j(w)K_i^+(z)G_{i,j}(w/z)G_{i,j}\big(C^{2}w/z\big),
\label{KK2}
\\
K_i^{+}(z)E_j(w)=E_j(w)K_i^{+}(z)G_{i,j}(w/z),
\quad
E_j(w)K_i^{-}(z)=K_i^{-}(z)E_j(w)G_{j,i}(z/w),\label{KK3}
\\
d_{i,j}g_{i,j}(z,w)E_i(z)E_j(w)+g_{j,i}(w,z)E_j(w)E_i(z)
\label{EE}
\\ \quad
{}=\!\frac{\delta_{i,j}}{q\!-\!q^{-1}}
\big(g_{i,i}(z,w)\delta\bigl(C^2z/w\bigr)K^-_i(z)K^+_i\big(C^2z\big)
\!+\!g_{i,i}(w,z)\delta\bigl(C^2w/z\bigr)K^-_i(w)K^+_i\big(C^2w\big)\big),\nn
\\
E_i(z)E_j(w)=E_j(w)E_i(z),\qquad i\not\equiv j,\, j\pm1.\label{EE2}
\end{gather}
In addition, we impose the Serre relations
\begin{align}
\mathop{\Sym}_{z_1,z_2}\,[E_i(z_1),[E_i(z_2),E_{i\pm 1}(w)]_q]_{q^{-1}}
=
-\mathop{\Sym}_{z_1,z_2}&\bigg\{\bigg(\frac{1}{1-q^{-1}d^{\mp1}z_1/w}
+\frac{q^{-1}d^{\pm1}w/z_2}{1-q^{-1}d^{\pm1}w/z_2}\bigg)\nn
\\
&\times \delta\big(C^2z_1/z_2\big)K_i^-(z_1)E_{i\pm 1}(w)K_i^+(z_2)\bigg\}.\label{Serre_3}
\end{align}
In the right-hand side, the rational functions of the form $1/(1-a z_1/w)$ or $1/(1-b w/z_2)$
stand for their expansions in non-negative powers of $z_1/w$ or $w/z_2$, respectively.

In the right-hand sides of \eqref{EE} and \eqref{Serre_3}, the Fourier coefficients of $K_i^-(z)K_i^+\big(C^{2}z\big)$ are infinite series.
Therefore the relations require some justification. We proceed as follows.
We~define a $\Z$ grading of generators by
\begin{gather*}
\deg E_{i,k}=k,\qquad \deg H_{i,\pm r}=\pm r,\qquad \deg K_{i,0}^{\pm}=\deg C=0.
\end{gather*}
We call it the homogeneous grading.
Consider the free algebra $\mc A$ generated by $E_{i,k}$, $H_{i,\pm r}$, $K^\pm_{i,0}$,~$C^{\pm1}$.
Let $\widetilde {\mc A}$ be the completion of $\mc A$ with respect to the grading in the positive direction.
Then we define $\cK_n$ to be the graded quotient algebra of $\widetilde {\mc A}$ under the relations \eqref{KK1}--\eqref{Serre_3}.

The quadratic relations \eqref{EE} imply
\begin{align}
d_{i,j}\hat{g}_{i,j}(z,w)E_i(z)E_j(w)+\hat{g}_{j,i}(w,z)E_j(w)E_i(z)=0,
\label{ghatEE}
\end{align}
where
\begin{gather*}
\hat{g}_{i,j}(z,w)=
\begin{cases}
\big(z-C^2w\big)\big(z-C^{-2}w\big)\big(z-q^2w\big), &i=j,
\\
g_{i,j}(z,w), &i\neq j.
\end{cases}
\end{gather*}
The factor $\big(z-C^2w\big)\big(z-C^{-2}w\big)$ for $i=j$ is chosen to kill the delta functions in the right-hand side of~\eqref{EE}.

\begin{rem}\label{rem:size}
Algebra $\cK_n$ has a filtration defined by
$\pdeg E_i(z)=1$, $\pdeg K_i^\pm(z)=0$ for all~$i$, which we call
the principal filtration.
In the associated graded algebra, the subalgebra gene\-ra\-ted by $\{E_i(z)\}$ is isomorphic to
the subalgebra of the quantum toroidal algebra $\E_n$ generated by~$\{e_i(z)\}$.

The relations for $K^\pm_i(z)$ are slightly different from those of $\psi^{\pm}_i(z)$ in $\E_n$,
since the right-hand side of \eqref{KK2} contains a product of $G_{i,j}$'s as opposed to the ratio.
In particular, while the elements $\psi^\pm_{i,0}\in \E_n$ are inverse to each other,
the elements $K_{i,0}^{\pm}\in\cK_n$ are not mutually commutative,
\begin{align*}
&K_{i,0}^+ K_{j,0}^-=q^{2(\bar \alpha_i,\bar\alpha_j)} K_{j,0}^-K_{i,0}^+ .
\end{align*}
We have the relations
\begin{align*}
[H_{i,r},H_{i,s}]=-\delta_{r+s,0} \cdot a_{i,j}(r)\frac{1+C^{2r}}{q-q^{-1}},\qquad r>0,
\end{align*}
which are to be compared with \eqref{E-hh}.
In the limit $q\to 1$ with $C=q^c$, algebra $\E_n$ reduces to the enveloping algebra of a Lie algebra
of matrix valued difference operators, see~\cite[Section 3.7]{FJMM3}.
In~contrast, algebra $\cK_n$ does not seem to have a reasonable limit due to the presence of the sum~$1+C^{2r}$.
\end{rem}

\subsection{Wheel conditions}%\label{subsec:wheel}

All representations of $\E_n$ or $\cK_n$ considered in this paper are admissible representations.
In this setting, we rewrite the formal series relations of generating currents
in the language of matrix coefficients.

Let $V=\!\oplus_{k=-\infty}^{k_0} \!V_k$ be a graded vector space with degrees bounded from above and \mbox{$\dim V_k\!<\!\infty$}.
We denote by $V^*=\oplus_{k=-\infty}^{k_0} V^*_k$ the restricted dual space.
Suppose that we are given a set of formal series $\big\{E_i(z)=\sum_{k\in\Z}E_{i,k}z^{-k}\mid i\in\Z/n\Z\big\}$, such that
$E_{i,k}\in\End V$, $E_{i,k}V_l\subset V_{k+l}$,
sa\-tisfying the relations
\eqref{EE2} and \eqref{ghatEE} with some $C\in \C^\times$.
We assume that the zeros of $\hat{g}_{i,i}(z,w)$ are distinct:
\begin{gather*}
C^2\neq \pm 1, q^{\pm2}.%\label{distinct}
\end{gather*}
It follows from \eqref{EE2} and \eqref{ghatEE} that all matrix coefficients
\begin{gather*}
\langle w, E_{i_1}(z_1)\cdots E_{i_N}(z_N) v\rangle,\qquad v\in V,\quad w\in V^*,
\end{gather*}
converge in the region $|z_1|\gg\cdots\gg|z_N|$
to rational functions which have at most simple poles at $\hat{g}_{i_r,i_s}(z_{r},z_s)=0$, where
$r<s$ and $i_r\equiv i_s,i_s\pm1$.

Quite generally, let $A_1(z)$, $A_2(z)$ be operator-valued
formal series and $r_{i,j}(x)$ be rational functions.
We shall say that an exchange relation
\begin{gather*}
r_{1,2}(z_2/z_1)A_1(z_1)A_2(z_2)=r_{2,1}(z_1/z_2)A_2(z_2)A_1(z_1)
\end{gather*}
holds as rational functions,
if an arbitrary matrix coefficient of each side converges (in regions $|z_1|\gg |z_2|$ and $|z_2|\gg |z_1|$ respectively)
to the same rational function.
Similarly we shall use the term ``exchange relation as meromorphic functions''.

Equations \eqref{EE2} and \eqref{ghatEE} imply that we have relations as rational functions
\begin{gather}
\lambda^0_{i,j}(w/z)E_i(z)E_j(w)=\lambda^0_{j,i}(z/w)E_j(w)E_i(z),
\label{f0EE}
\end{gather}
where we set
\begin{gather*}
\lambda_{i,i}^0(x)=\frac{1-C^2x}{1-x}\frac{1-C^{-2}x}{1-x}\frac{1-q^2x}{1-x},
%\label{f0ii}
\\
\lambda_{i,i\pm1}^0(x)=d^{\mp1/2}\frac{1-q^{-1}d^{\pm1}x}{1-x},
%\label{f0iipm1}
\end{gather*}
and $\lambda^0_{i,j}(x)=1$ for $j\not\equiv i,i\pm1$.

Set
\begin{gather}
E_{i_1,\dots,i_N}(z_1,\dots, z_N)=\prod_{1\le r<s\le N}\lambda^0_{i_r,i_s}(z_{s}/z_{r}) E_{i_1}(z_1)\cdots E_{i_N}(z_N).
\label{E^N}
\end{gather}
It follows from \eqref{f0EE} that
\begin{gather}
E_{i_{\sigma(1)},\dots,i_{\sigma(N)}}(z_{\sigma(1)},\dots,z_{\sigma(N)}) =E_{i_1,\dots,i_N}(z_1,\dots,z_N),\qquad
\sigma\in \mathfrak{S}_N.
\label{E-symm}
\end{gather}
In particular \eqref{E^N} is symmetric with respect to $z_r$ and $z_s$ when they have the same ``color'' $i_r=i_s$.
Moreover all matrix coefficients of \eqref{E^N} have the form
\begin{gather}
\frac{P(z_1,\dots,z_N)}{\prod_{r<s}(z_{r}-z_s)^{N_{r,s}}},
\qquad
P(z_1,\dots,z_N)\in\C\big[z_1^{\pm1},\dots,z_N^{\pm1}\big],
\label{Laurent}
\end{gather}
where $N_{r,s}=2$ if $i_r=i_s$, $N_{r,s}=1$ if $i_r\equiv i_s\pm 1$, and $N_{r,s}=0$ otherwise.
Note that, when $i_r=i_s$,
third order poles are absent due to the symmetry in variables of the same color.

The following is easy to see.
\begin{prop}\label{prop:EErel}
Under the conditions \eqref{EE2} and \eqref{ghatEE}, the condition
\eqref{EE} holds if and only if
\begin{gather}
E_{i,i}\big(z,C^2z\big)=\frac{1}{q-q^{-1}}\frac{1+C^2}{1-C^2}\frac{1-q^2C^2}{1-C^2}\, K^-_i(z)K_i^+\big(C^2z\big).\label{quad-res}
\end{gather}
\end{prop}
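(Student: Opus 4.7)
The plan is to recast both sides of \eqref{EE} as formal sums of two terms proportional to $\delta\bigl(C^{\pm 2}w/z\bigr)$ and read off \eqref{quad-res} by matching coefficients. For the LHS I would use that $g_{i,i}(z,w)E_i(z)E_i(w)=\mu(z,w)E_{i,i}(z,w)$ as a formal series in $|z|>|w|$, where
\[
\mu(z,w):=\frac{g_{i,i}(z,w)}{\lambda_{i,i}^0(w/z)}=\frac{(z-w)^3}{(z-C^2w)(z-C^{-2}w)}.
\]
A direct check gives the skew-symmetry $\mu(w,z)=-\mu(z,w)$, and together with the symmetry \eqref{E-symm} of $E_{i,i}$ this rewrites the LHS of \eqref{EE} as $(\iota_{z,w}-\iota_{w,z})\bigl[\mu(z,w)E_{i,i}(z,w)\bigr]$, the formal difference of the two Laurent expansions (in $|z|>|w|$ and $|w|>|z|$, denoted $\iota_{z,w}$ and $\iota_{w,z}$) of a single operator-valued rational function.

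Next I would analyze the pole structure of $\mu(z,w)E_{i,i}(z,w)$. By \eqref{ghatEE} and the convergence statement preceding the proposition, $E_{i,i}(z,w)$ is regular at $z=C^{\pm 2}w$ and $z=q^2w$, while any pole at $z=w$ is killed by the cubic zero of $\mu$ there. Under the assumption $C^2\neq\pm 1,q^{\pm 2}$ the remaining poles are therefore simple and located at the two distinct points $z=C^{\pm 2}w$. Partial fractions in $z$ combined with the formal distribution identity $(\iota_{z,w}-\iota_{w,z})(z-aw)^{-1}=z^{-1}\delta(aw/z)$ then give
\[
\text{LHS of \eqref{EE}}=\sum_{\varepsilon=\pm 1}\frac{1}{z}\Bigl(\Res_{z=C^{2\varepsilon}w}\mu(z,w)E_{i,i}(z,w)\Bigr)\,\delta\bigl(C^{2\varepsilon}w/z\bigr),
\]
and an explicit residue computation, together with the symmetry $E_{i,i}(C^{2\varepsilon}w,w)=E_{i,i}(w,C^{2\varepsilon}w)$, expresses each coefficient as an explicit scalar times the specialization $E_{i,i}(\zeta,C^2\zeta)$ at $\zeta=w$ or $\zeta=C^{-2}w$.

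Finally, I would expand the RHS of \eqref{EE} in the same form by using $\delta(C^2z/w)=\delta(C^{-2}w/z)$ and evaluating $g_{i,i}(z,w)$ and $g_{i,i}(w,z)$ on the support of each delta. Matching coefficients of $\delta(C^2w/z)$ gives \eqref{quad-res} directly, while matching coefficients of $\delta(C^{-2}w/z)$ reproduces the same identity with $z$ replaced by $C^{-2}z$ and is therefore equivalent. The converse follows from the same calculation read backwards: \eqref{quad-res} fixes both residues of $\mu(z,w)E_{i,i}(z,w)$ at $z=C^{\pm2}w$ and hence the full formal difference of its two expansions, reconstructing \eqref{EE}. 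The main obstacle is the arithmetic bookkeeping of constants — the $C^{\pm 2k}$ factors picked up from $\mu$, from $z^{-1}\delta(aw/z)=(aw)^{-1}\delta(aw/z)$, and from specializing $g_{i,i}$ — which must, after division by $q-q^{-1}$, assemble into precisely the prefactor $\frac{1+C^2}{1-C^2}\frac{1-q^2C^2}{1-C^2}$ appearing in \eqref{quad-res}.
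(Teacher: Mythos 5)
Your proposal is correct and is essentially the argument the paper has in mind (the paper marks this proposition as ``easy to see'' and omits the proof, but the same delta-function extraction from $1/\lambda^0_{i,i}$, with exactly the reciprocal of the prefactor in \eqref{quad-res} as the coefficient of $\delta\big(C^2z_2/z_1\big)$, appears explicitly in the proof of Proposition \ref{prop:wheel}). The arithmetic checks out: the residue of $\mu(z,w)E_{i,i}(z,w)$ at $z=C^2w$ yields $w\frac{(C^2-1)^2}{C^2+1}E_{i,i}\big(w,C^2w\big)$ against $\frac{w(1-q^2C^2)}{q-q^{-1}}K_i^-(w)K_i^+\big(C^2w\big)$ on the right, giving \eqref{quad-res}, and the $\delta\big(C^{-2}w/z\big)$ coefficients reproduce the same identity at $C^{-2}w$, so both directions of the equivalence follow as you describe.
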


We have also
\begin{prop}\label{prop:wheel}
Under the conditions \eqref{EE2} and \eqref{ghatEE},
the Serre relations \eqref{Serre_3} are satisfied
if and only if
the following zero conditions hold:
\begin{gather}
E_{i,i,i\pm1}\big(z,q^2 z,q d^{\mp1}z\big)=0,\label{wheel1}
\\
E_{i,i,i\pm1}\big(z,C^2 z, C^2q^{-1}d^{\mp1}z\big)=0.\label{wheel2}
\end{gather}
If it is the case then we have also\vspace{-.5ex}
\begin{gather}
E_{i,i,i\pm1}\big(z,C^{-2}z,C^{-2}q d^{\mp1}z\big)=0.
\label{wheel3}
\end{gather}
\end{prop}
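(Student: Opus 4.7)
The plan is to work on an admissible representation $V$, so that matrix coefficients of $E_{i_1}(z_1)\cdots E_{i_N}(z_N)$ are rational functions with the pole structure \eqref{Laurent}, and both sides of \eqref{Serre_3} can be compared as operator-valued meromorphic functions. First, expand the nested $q$-commutator on the LHS of \eqref{Serre_3} into the four three-fold products $E_i(z_1)E_i(z_2)E_{i\pm 1}(w)$, $E_{i\pm 1}(w)E_i(z_2)E_i(z_1)$, $-q\,E_i(z_1)E_{i\pm 1}(w)E_i(z_2)$, $-q^{-1}\,E_i(z_2)E_{i\pm 1}(w)E_i(z_1)$. Using \eqref{f0EE} to reorder each into the reference form $E_i E_i E_{i\pm 1}$ with an explicit scalar factor, every term becomes a rational multiple of the symmetric function $E_{i,i,i\pm 1}(z_1,z_2,w)$ of \eqref{E^N}. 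After applying $\Sym_{z_1,z_2}$, the LHS acquires the compact form $\mathcal R(z_1,z_2,w)\,E_{i,i,i\pm 1}(z_1,z_2,w)$, where $\mathcal R$ is a scalar rational function symmetric in $z_1,z_2$.

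Next, I analyze the poles of $\mathcal R$. Using the explicit form of $\lambda^0_{i,i}$ and $\lambda^0_{i,i\pm 1}$, the candidate singularities at $z_2=z_1$, $z_2=q^{\pm 2}z_1$, and $w=q^{\pm 1}d^{\pm 1}z_j$ all cancel in the symmetrized combination, leaving only simple poles at $z_2=C^{\pm 2}z_1$ — exactly where the delta support $\delta(C^2 z_1/z_2)$ sits on the RHS of \eqref{Serre_3}. Extract the residue at $z_2=C^2 z_1$: by \eqref{quad-res}, $E_{i,i}(z_1,C^2 z_1)$ is a constant multiple of $K_i^-(z_1)K_i^+(C^2 z_1)$, and commuting $E_{i\pm 1}(w)$ through $K_i^+(C^2 z_1)$ via \eqref{KK3} yields $K_i^-(z_1)E_{i\pm 1}(w)K_i^+(C^2 z_1)$ times an explicit rational factor in $w/z_1$. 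Equating this with the $\delta(C^2 z_1/z_2)$ term on the RHS of \eqref{Serre_3} reduces, after cancelling the common $K^-EK^+$ operator, to the vanishing of a specific scalar at $w=C^2 q^{-1}d^{\mp 1}z_1$, which is precisely \eqref{wheel2}. The companion residue at $z_2=C^{-2}z_1$, arising from the $z_1\leftrightarrow z_2$ symmetrization of the RHS, is handled in parallel and furnishes \eqref{wheel3}.

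Once the delta contributions are matched, what remains is $\mathcal R_{\text{reg}}(z_1,z_2,w)\cdot E_{i,i,i\pm 1}(z_1,z_2,w)=0$ with $\mathcal R_{\text{reg}}$ regular at $z_2=C^{\pm 2}z_1$. Combined with the pole bound \eqref{Laurent} for $E_{i,i,i\pm 1}$, the zeros of $\mathcal R_{\text{reg}}$ along the curve $(z,q^2 z,qd^{\mp 1}z)$ and its $\GS_2$-image force the vanishing of $E_{i,i,i\pm 1}$ there, which is \eqref{wheel1}. For the converse, reading this argument backwards shows that if \eqref{wheel1} and \eqref{wheel2} hold (and \eqref{wheel3} follows from \eqref{wheel2} under the $\GS_2$-action on the two residues), the rewritten LHS of \eqref{Serre_3} produces exactly the required delta-function singularities with no regular obstruction.

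The hard part will be the detailed bookkeeping in the pole analysis: first, verifying that the spurious poles of $\mathcal R$ at $z_1=z_2$, $z_2=q^{\pm 2}z_1$, and $w=q^{\pm 1}d^{\pm 1}z_j$ indeed cancel after $\Sym_{z_1,z_2}$; second, checking that the residue at $z_2=C^2 z_1$ reproduces the exact rational combination $\frac{1}{1-q^{-1}d^{\mp 1}z_1/w}+\frac{q^{-1}d^{\pm 1}w/z_2}{1-q^{-1}d^{\pm 1}w/z_2}$ appearing on the RHS of \eqref{Serre_3}. It is this matching that pins down the precise specializations of $E_{i,i,i\pm 1}$ which must vanish, and hence identifies \eqref{wheel2} and \eqref{wheel3} as the correct complements to the classical quantum-toroidal wheel condition \eqref{wheel1}.
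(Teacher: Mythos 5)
Your overall strategy---collapse every ordering in the nested $q$-commutator to a rational multiple of $E_{i,i,i\pm1}(z_1,z_2,w)$ via \eqref{f0EE} and then hunt for the singular contributions---is the same as the paper's, but the mechanism you propose for extracting the wheel conditions does not work, and the problem is not mere bookkeeping. The two sides of \eqref{Serre_3} cannot be compared ``as operator-valued meromorphic functions'': the right-hand side is supported entirely on delta functions, so its meromorphic content is zero. The six orderings on the left are six \emph{different} expansions of one rational function, and the entire content of the relation lies in the discrepancies between those expansions. The paper therefore substitutes $1/(1-z)=\delta(z)-z^{-1}/(1-z^{-1})$ into each reordering factor and compares the coefficient of every \emph{product} of delta functions separately; your single object $\mathcal R\cdot E_{i,i,i\pm1}$ discards exactly the information being tested.

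Concretely, your derivation of \eqref{wheel1} is logically backwards: a zero of the scalar prefactor $\mathcal R_{\rm reg}$ along $(z,q^2z,qd^{\mp1}z)$ imposes no condition on the operator factor it multiplies---it makes the product vanish for free. Once the delta contributions are separated, the residual rational identity among the $\lambda^0$'s holds identically (the paper's cases (1)--(3)) and forces nothing. Condition \eqref{wheel1} in fact appears as the coefficient of the double delta $\delta(q_3z_1/w)\delta\big(q^2z_1/z_2\big)$, i.e., a simultaneous collision of a $z$--$z$ pole and a $z$--$w$ pole, which a residue in the single variable $z_2$ never detects. The same issue infects \eqref{wheel2}: the simple residue at $z_2=C^2z_1$ matches the right-hand side identically via \eqref{quad-res} (case (4)) with no condition imposed, and \eqref{wheel2} only emerges from the further double-delta term where $w$ simultaneously hits $q_3 z_1$ (case (6)). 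Finally, \eqref{wheel3} is not the $\GS_2$-image of \eqref{wheel2}: swapping the first two arguments of \eqref{wheel2} and rescaling gives third argument $q^{-1}d^{\mp1}z$, not $C^{-2}qd^{\mp1}z$; the paper obtains \eqref{wheel3} by redoing the whole expansion in the opposite region $|z_1|\ll|z_2|\ll|w|$. Since the forward direction breaks at \eqref{wheel1}, the ``read it backwards'' converse does not stand either.
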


We call \eqref{wheel1}--\eqref{wheel3} ``wheel conditions''.
\begin{proof}
The left-hand side and the right-hand side of the Serre relations \eqref{Serre_3}
read respecti\-vely~as
\begin{align*}
{\rm LHS}=\!{}&\Sym_{z_1,z_2}\bigl(E_i(z_1)E_i(z_2)E_{i+1}(w)\!-\!\big(q\!+\!q^{-1}\!\big)E_i(z_1)E_{i+1}(w)E_i(z_2) \!+\!E_{i+1}(w)E_i(z_1)E_i(z_2)\bigr),
\\
{\rm RHS}=\!{}&-\Sym_{z_1,z_2}\biggl(-\frac{q_3^{-1}w/z_1}{1-q_3^{-1} w/z_1}+\frac{q_1w/z_2}{1-q_1w/z_2}
+\delta(q_3z_1/w)\biggr)\delta\big(C^2z_1/z_2\big)K_i^-(z_1)
\\
&\times E_{i+1}(w)K^+_i(z_2).
\end{align*}
The case involving $E_i(z_1)$, $E_i(z_2)$ and
$E_{i-1}(w)$ can be treated by interchanging $q_3$ with $q_1$.

Using the definition \eqref{E^N}, {\rm LHS} can be rewritten as
\begin{align*}
{\rm LHS}={}&E_{i,i,i+1}(z_1,z_2,w)\bigg(
\frac{1}{\lambda^0_{i,i}\big(\frac{z_2}{z_1}\big)\lambda^0_{i,i+1}\big(\frac{w}{z_1}\big)\lambda^0_{i,i+1}\big(\frac{w}{z_2}\big)}
-\frac{q+q^{-1}}{\lambda^0_{i,i}\big(\frac{z_2}{z_1}\big)\lambda^0_{i,i+1}\big(\frac{w}{z_1}\big) \underline{\lambda^0_{i+1,i}\big(\frac{z_2}{w}\big)}}
\\
&+\frac{1}{\lambda^0_{i,i}\big(\frac{z_2}{z_1}\big) \underline{\lambda^0_{i+1,i}\big(\frac{z_1}{w}\big)\lambda^0_{i+1,i}\big(\frac{z_2}{w}\big)}}
+\frac{1}{\underline{\lambda^0_{i,i}\big(\frac{z_1}{z_2}\big)}\lambda^0_{i,i+1} \big(\frac{w}{z_1}\big)\lambda^0_{i,i+1}\big(\frac{w}{z_2}\big)}
\\
&-\frac{q+q^{-1}}{\underline{\lambda^0_{i,i}\big(\frac{z_1}{z_2}\big)\lambda^0_{i+1,i} \big(\frac{z_1}{w}\big)}\lambda^0_{i,i+1}\big(\frac{w}{z_2}\big)}
+\frac{1}{\underline{\lambda^0_{i,i}(\frac{z_1}{z_2})\lambda^0_{i+1,i} \big(\frac{z_1}{w}\big)\lambda^0_{i+1,i}\big(\frac{z_2}{w}\big)}}\bigg).
\end{align*}
The matrix coefficients of $E_{i,i,i+1}(z_1,z_2,w)$ are Laurent polynomials up to poles on $z_1=z_2$ or~$z_i=w$,
which are cancelled by the zeros of $1/\lambda^0_{i,j}$.

This is a sum of six formal series, each expanded in a different region.
In order to bring them to expansions in a common region, we use the identity $1/(1-z)=\delta(z)-z^{-1}/\big(1-z^{-1}\big)$,
where $1/(1-z)$ stands for its expansion in non-negative powers of $z$, while $z^{-1}/\big(1-z^{-1}\big)$ means the one in negative powers of $z$.
In the underlined factors, we substitute
\begin{gather*}
\frac{1}{\lambda^0_{i+1,i}\big(\frac{z}{w}\big)}=\text{(expansion in $|z|\gg|w|$)}
+d^{-1/2}\big(1-q_3^{-1}\big)\delta(q_3{z}/{w}),
\\
\frac{1}{\lambda^0_{i,i}\big(\frac{z_1}{z_2}\big)}=\text{(expansion in $|z_1|\gg|z_2|$)}
-\frac{q^{-2}\big(1-q^2\big)^3}{\big(1-C^{-2}q^2\big)\big(1-C^2q^2\big)}\delta\big(q^{2}{z_1}/{z_2}\big)
\\ \hphantom{\frac{1}{\lambda^0_{i,i}(\frac{z_1}{z_2})}=}
{}+\frac{1-C^2}{1+C^{2}}\frac{1-C^{2}}{1-q^2C^{2}}\delta\big(C^{2}{z_2}/{z_1}\big)
+
\frac{1-C^{-2}}{1+C^{-2}}\frac{1-C^{-2}}{1-q^2C^{-2}}\delta\big(C^{-2}{z_2}/{z_1}\big)
\end{gather*}
and bring all terms into the sum of their expansions in $|z_1|\gg|z_2|\gg|w|$
and additional delta functions.
We then compare the coefficients of each product of delta functions.

Up to the symmetry $z_1\leftrightarrow z_2$, there are the following cases to consider:
\begin{align*}
(1)\quad& 1,&(2)\quad& \delta(q_3 z_1/w), &(3)\quad& \delta\big(q^2z_1/z_2\big), \\
(4)\quad& \delta\big(C^2z_2/z_1\big),&(5)\quad& \delta(q_3 z_1/w)\delta\big(q^2z_1/z_2\big), &(6)\quad& \delta(q_3 z_1/w)\delta\big(C^2z_2/z_1\big),
\\
(7)\quad& \delta(q_3 z_1/w)\delta\big(C^{-2}z_2/z_1\big), &(8)\quad& \delta(q_3 z_1/w) \delta(q_3 z_2/w).
\end{align*}

In cases (1), (2) and (3), the coefficients vanish due to the identities of rational functions
\begin{align*}
0={}&\Sym_{z_1,z_2}\bigg\{
\frac{1}{\lambda^0_{i,i}\big(\frac{z_2}{z_1}\big)\lambda^0_{i,i+1}\big(\frac{w}{z_1}\big)\lambda^0_{i,i+1}\big(\frac{w}{z_2}\big)}
-\frac{q+q^{-1}}{\lambda^0_{i,i}\big(\frac{z_2}{z_1}\big)\lambda^0_{i,i+1}\big(\frac{w}{z_1}\big)
\lambda^0_{i+1,i}\big(\frac{z_2}{w}\big)}
\\
&+\frac{1}{\lambda^0_{i,i}\big(\frac{z_2}{z_1}\big)\lambda^0_{i+1,i}\big(\frac{z_1}{w}\big)\lambda^0_{i+1,i}\big(\frac{z_2}{w}\big)}
\bigg\},
\\
0={}&\frac{1}{\lambda^0_{i,i}(z_2/z_1)}\frac{1}{\lambda^0_{i+1,i}(z_2/w)}
\!-\!\big(q\!+\!q^{-1}\big)\frac{1}{\lambda_{i,i}^0(z_1/z_2)}\frac{1}{\lambda^0_{i,i+1}(w/z_2)}
\!+\!\frac{1}{\lambda^0_{i,i}(z_1/z_2)}\frac{1}{\lambda^0_{i+1,i}(z_2/w)},
\\
0={}&
\frac{1}{\lambda^0_{i,i+1}(w/z_1)}\frac{1}{\lambda^0_{i,i+1}(w/z_2)}
-\big(q+q^{-1}\big)\frac{1}{\lambda^0_{i+1,i}(z_1/w)}\frac{1}{\lambda^0_{i,i+1}(w/z_2)}
\\
&+\frac{1}{\lambda^0_{i+1,i}(z_1/w)}\frac{1}{\lambda^0_{i+1,i}(z_2/w)}.
\end{align*}
In case (8) the coefficient vanishes on $z_1=z_2$ due to the triple zero of $\lambda^0_{i,i}(z_1/z_2)^{-1}$ there.

In case (4), the relevant terms are the 3 terms containing ${\lambda^0_{i,i}(z_1/z_2)}^{-1}$.
Using the relation~\eqref{quad-res}, we find that the coefficient of $\delta\big(C^2z_2/z_1\big)$ is
\begin{gather*}
\frac{1-C^2}{1+C^2}\frac{1-C^2}{1-q^2C^2}
E_{i,i}\big(C^2z_2,z_2\big)E_{i+1}(w) \lambda^0_{i,i+1}(w/z_1)\lambda^0_{i,i+1}(w/z_2)
\\ \qquad
{}\times \bigg(
\frac{1}{\lambda^0_{i,i+1}(w/z_1)}\frac{1}{\lambda^0_{i,i+1}(w/z_2)}
-\big(q+q^{-1}\big)\frac{1}{\lambda^0_{i+1,i}(z_1/w)}\frac{1}{\lambda^0_{i,i+1}(w/z_2)}
\\ \qquad\hphantom{\times \bigg(}
+\frac{1}{\lambda^0_{i+1,i}(z_1/w)}\frac{1}{\lambda^0_{i+1,i}(z_2/w)}\bigg)
\\ \qquad
{}=-\frac{d(w/z_1-q^2w/z_2)}{\big(1-q_3^{-1}w/z_1\big)\big(1-q_3^{-1}w/z_2\big)}
K_i^-(z_2)K^+_i\big(C^2z_2\big)E_{i+1}(w),
\end{gather*}
which coincides with the corresponding term in {\rm RHS}.

Likewise, in case (7) the coefficient of $\delta\big(C^2z_2/z_1\big)\delta\big(q_3^{-1}w/z_1\big)$ comes from
\begin{gather*}
E_{i,i,i+1}(z_1,z_2,w)\frac{1}{\lambda^0_{i,i}(z_1/z_2)}\frac{1}{\lambda^0_{i+1,i}(z_2/w)}
 \bigl({-}\big(q+q^{-1}\big)\lambda^0_{i,i+1}(w/z_2)+\lambda^0_{i+1,i}(z_2/w)\bigr).
\end{gather*}
Computing similarly we find that it matches with the corresponding term in {\rm RHS}.

It remains to consider (5) and (6). The coefficients of the product of delta functions are proportional (with non-zero multipliers)
to the left-hand side of \eqref{wheel1} and \eqref{wheel2}, respectively.
Since there are no terms that come from {\rm RHS},
we have shown that the Serre relations are equivalent to \eqref{wheel1} and \eqref{wheel2}.

Instead of rewriting terms into expansions in $|z_1|\gg|z_2|\gg|w|$,
one can equally well proceed to the opposite region $|z_1|\ll |z_2|\ll |w|$. Computing in the same way we obtain
 \eqref{wheel3} in place of \eqref{wheel2}.
\end{proof}

\subsection{Comodule structure}

In this section we show that, when we restrict to admissible modules, $\cK_n$ may be viewed as a~left comodule of
$\E_n$. To be precise we prove the following.\footnote{Formula for $\Delta$ is slightly changed from the one used in \cite{FJMV}.
The $\E_1$ comodule structure discussed there should be understood at the level of admissible representations.}

\begin{thm}\label{thm:comod}
Let $V$ be an admissible $\E_n$ module and $W$ an admissible $\cK_n$ module, such that~$C$ acts as scalar $C_1$, $C_2$ respectively. We set $C_{12}=C_1C_2$ and assume that
\begin{gather*}
C_{2}^2\neq \pm1, q^{\pm 2},\qquad
C_{12}^2\neq \pm1, q^{\pm 2}.
\end{gather*}
Then
$V\otimes W$ is given the structure of an admissible $\cK_n$ module by the action of the following currents:
\begin{gather}
\Delta E_i(z)=e_i(C_2z)\otimes K^-_i(z) + 1\otimes E_i(z) +\ft_i\big(C_2^{-1}z\big)\otimes K^+_i(z),
\label{delta-E}
\\
\Delta K_i^+(z)= \psi_i^+\big(C_1^{-1}C_2^{-1}z\big)^{-1} \otimes K_i^+(z),
\label{delta-Kp}
\\
\Delta K_i^-(z)= \psi_i^{-}(C_2 z) \otimes K_i^-(z),
\label{delta-Km}
\\
\Delta C=C_{12}.
\label{delta-C}
\end{gather}

Moreover the following ``coassociativity'' and ``counit property'' hold:
\begin{gather*}
(V_1\otimes V_2)\otimes W=V_1\otimes (V_2\otimes W),\\ \C\otimes W=W.
\end{gather*}
Here $V_1$, $V_2$ are admissible $\E_n$ modules and $\C$ denotes the trivial $\E_n$ module.
\end{thm}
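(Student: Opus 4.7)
The plan is to verify that \eqref{delta-E}--\eqref{delta-C} define operators on $V\otimes W$ satisfying the defining relations \eqref{KK1}--\eqref{Serre_3} of $\cK_n$ with $C$ acting as the scalar $C_{12}$; admissibility of $V\otimes W$ is immediate from admissibility of $V$ and $W$ and the additivity of the grading. The counit property follows by direct substitution: on the trivial $\E_n$-module one has $e_i\mapsto 0$, $\ft_i\mapsto 0$, $\psi_i^\pm\mapsto 1$, $C\mapsto 1$, so $\Delta E_i(z)$ collapses to $1\otimes E_i(z)$ and likewise for $\Delta K_i^\pm$ and $\Delta C$. Coassociativity reduces to a direct check that iterating the $\cK_n$-coaction and applying the coaction with $V_1\otimes V_2$ viewed as an $\E_n$-module via $\Delta_{\E_n}$ yield the same operators; this follows from the $\E_n$-coproduct formulas for $e_i$, $\ft_i$, $\psi_i^\pm$ together with compatibility of the central-element shifts in the arguments.

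The relations \eqref{KK1}--\eqref{KK3} and \eqref{EE2} reduce to routine calculations. For \eqref{KK2}, one inverts the $\psi_i^+\psi_j^-$ exchange in $\E_n$, substitutes the shifted arguments $C_{12}^{-1}z$ and $C_2 w$, and combines with \eqref{KK2} on the $\cK_n$ side: the two factors $G_{i,j}(C_2^2 w/z)^{\pm 1}$ cancel and leave exactly $G_{i,j}(w/z)G_{i,j}(C_{12}^2 w/z)$. For \eqref{KK3}, commute $\psi_i^\pm$ past $e_j$ and $\ft_j$ on the $\E_n$ side and $K_i^\pm$ past $E_j$ on the $\cK_n$ side; the argument shifts are arranged so that the $G$-factors match. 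Relation \eqref{EE} is the first substantial check: expanding $\Delta E_i(z)\Delta E_j(w)$ into nine tensor products, the three pure-type pairs satisfy the quadratic exchange \eqref{ghatEE} on each factor and produce the regular part, while the two mixed $e\otimes K^-$ with $\ft\otimes K^+$ type pairs (nonzero only for $i=j$) are reduced via \eqref{ftil2}, producing delta-function contributions on the $\E_n$ factor combined with products of $K_i^\pm$ on the $\cK_n$ factor. By Proposition \ref{prop:EErel} it suffices to verify the residue identity \eqref{quad-res} for $\Delta E$, which follows from \eqref{quad-res} in $\cK_n$ combined with the $\psi\otimes K$ contribution from \eqref{ftil2}; the assumptions $C_2^2,C_{12}^2\neq\pm 1,q^{\pm 2}$ ensure that all denominators appearing in the specializations are nonzero.

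The main obstacle is the Serre relation \eqref{Serre_3}, which by Proposition \ref{prop:wheel} we verify in the form of the wheel conditions \eqref{wheel1}--\eqref{wheel3} for the dressed triple products $\Delta E_{i,i,i\pm 1}(z_1,z_2,w)$. Expanding each factor produces a sum of tensor products of words in $\{e_i,1,\ft_i\}$ on the left with words in $\{K_i^-,E_i,K_i^+\}$ on the right. Since the Serre identities for $e_i$ and $\ft_i$ in $\E_n$ have the standard quadratic shape, the corresponding dressed triples on the $\E_n$ side vanish at the standard wheel $(z,q^2z,qd^{\mp 1}z)$, which handles \eqref{wheel1}. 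At the additional wheels \eqref{wheel2} and \eqref{wheel3} the $\E_n$-side triples no longer vanish automatically, and one must exploit the $\cK_n$-side wheel conditions together with the explicit zeros acquired by the $G_{i,i\pm 1}$ factors produced when moving $K_i^\pm$ past $E_{i\pm 1}$ via \eqref{KK3}, which land precisely on the $\cK_n$-side wheel specialization. The combinatorial bookkeeping of these three classes of cancellations across all three wheels, together with the coupling between the central-element shift by $C_{12}$ and the delta-function contributions from \eqref{ftil2}, is the core technical step.
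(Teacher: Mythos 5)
Your overall architecture coincides with the paper's: check the defining relations directly, reduce \eqref{EE} to the residue identity \eqref{quad-res} via Proposition \ref{prop:EErel}, reduce \eqref{Serre_3} to the wheel conditions via Proposition \ref{prop:wheel}, and expand $\Delta E_{i,i,i\pm1}(z_1,z_2,w)$ into its $27$ constituent tensor products. The treatment of \eqref{KK1}--\eqref{KK3}, \eqref{EE2}, the counit and coassociativity is fine, and your account of \eqref{EE} is essentially the paper's computation (though note that the delta-function part of \eqref{quad-res} for $\Delta E_i$ at $w=C_{12}^2z$ comes entirely from the $\psi_i^-\psi_i^{+\,-1}$ term of \eqref{ftil2}; the role of \eqref{quad-res} on $W$ is rather to cancel the spurious poles of $1\otimes E_i(z)E_i(w)$ at $w=C_2^{\pm2}z$ against the $-1/(1-w/z)$ part of the $e\ft$ contraction).

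There are, however, genuine gaps in your verification of the wheel conditions. First, your claim that \eqref{wheel1} is ``handled'' because the $\E_n$ Serre relations give vanishing of the left tensor factors at $(z,q^2z,qd^{\mp1}z)$ only applies to the two terms of principal degree $\pm3$; the bulk of the $27$ terms (e.g.\ $\phi\,e_i(C_2z_1)\otimes K_i^-(z_1)E_i(z_2)E_{i\pm1}(w)$ or $\phi\,1\otimes E_i(z_1)E_i(z_2)E_{i\pm1}(w)$) carry no triple of $e$'s or $\ft$'s, and their vanishing at \eqref{wheel1} must instead be extracted from zeros of the prefactor $\phi$, from zeros of the $G_{i,i}$ and $G_{i,i\pm1}$ factors generated by \eqref{KK3}, and, for the term $1\otimes E_iE_iE_{i\pm1}$, from the wheel condition on $W$ (which is legitimate only because $C_2^2\neq q^{\pm2}$, since $W$'s wheels sit at $C_2$ while the prefactor $\phi$ is built with $C_{12}$). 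Second, for \eqref{wheel2} the relevant specialization $z_2=C_{12}^2z_1$ is \emph{not} the $\cK_n$-side wheel of $W$ (which is at $C_2^2$), so the mechanism you invoke does not apply; what actually happens is that $\phi$ vanishes at $z_2=C_{12}^2z_1$ and kills every term except the single one carrying a compensating pole from the $e_i\ft_i$ contraction at $C_{12}^2$ ($A_2$, resp.\ $B_1$ in the paper's notation), and that surviving term is then annihilated by the zero of $\big(z_2-q^{-1}d^{\pm1}w\big)G_{i,i\pm1}(w/z_2)$ at $w=q^{-1}d^{\mp1}z_2$. Third, before any term-by-term evaluation at the wheels is legitimate, one must check that the sum of the $27$ terms has no poles other than $z_1=z_2$, $z_i=w$: individual terms do have poles at $z_2=C_2^{\pm2}z_1$ coming from \eqref{singE3} and \eqref{singK}, and these must be shown to cancel in pairs. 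This step is both what makes Proposition \ref{prop:wheel} applicable to $\Delta E$ and what justifies evaluating the decomposition at the specialized points; it is absent from your proposal.
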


\begin{proof}
On each vector of an admissible module, the currents of $\E_n$ or $\cK_n$
comprise only a finite number of negative powers in $z$.
Hence each Fourier coefficient of \eqref{delta-E}--\eqref{delta-Km} is a well-defined operator on $V\otimes W$.

It is easy to verify the relations \eqref{KK1}--\eqref{KK3} and \eqref{EE2}
by direct calculation. With the aid of~\eqref{ftil2}, one can also check \eqref{EE} for $i\neq j$.

Let us check \eqref{EE} for $i=j$.
We shall write $A_1(z,w)\equiv A_2(z,w)$ if each matrix coefficient of $A_1(z,w)-A_2(z,w)$ is a Laurent polynomial.
Then we have on $V$
\begin{gather}
g_{i,j}(z,w)e_i(z)e_j(w)\equiv 0,\qquad
g_{i,j}(z,w)\ft_i(z)\ft_j(w)\equiv 0,\label{singE1}
\\
e_i(z)\ft_j(w)\equiv \frac{\delta_{i,j}}{q-q^{-1}}\biggl(
{-}\frac{1}{1-w/z}+\frac{1}{1-C^{-2}w/z}\psi_i^-(z) \psi_i^+\big(C^{-1}w\big)^{-1}\biggr),
\label{singE3}
\end{gather}
and on $W$
\begin{gather}
g_{i,j}(z,w)E_i(z)E_j(w)\nn
\\ \qquad
{}\equiv
\frac{\delta_{i,j}}{q-q^{-1}}\bigg(
\frac{z-q^2w}{1-C^{-2}w/z}K_i^-(z)K_i^+(w)
+\frac{w-q^2z}{1-C^{2}w/z}K_i^-(w)K_i^+(z)\bigg).
\label{singK}
\end{gather}
Using these we obtain
\begin{gather*}
\big(q-q^{-1}\big)\big(z-q^2w\big)\Delta E_i(z)\Delta E_i(w)
\\ \qquad
{}\equiv\big(q-q^{-1}\big) \big(z-q^2w\big)\bigl(
e_i(C_2z)\ft_i\big(C_2^{-1}w\big)\otimes K_i^-(z)K_i^+(w)
\\ \qquad \hphantom{\equiv}
{}+\ft_i\big(C_2^{-1}z\big)e_i(C_2w) \otimes K_i^+(z)K_i^-(w)
+1\otimes E_i(z)E_i(w)\bigr)
\\ \qquad
{}\equiv -\biggl(\frac{z-q^2w}{1-C_2^{-2}w/z} -\frac{z-q^2w}{1-C_{12}^{-2}w/z}\psi^-_i(C_2z)\psi_i^+\big(C_{12}^{-1}w\big)^{-1}\biggr)
\otimes K_i^-(z)K_i^+(w)
\\ \qquad \hphantom{\equiv}
{}+\bigg(\frac{w-q^2z}{1-C_2^{-2}z/w}-\frac{w-q^2z}{1-C_{12}^{-2}z/w} \psi^-_i(C_2w)\psi_i^+\big(C_{12}^{-1}z\big)^{-1}\bigg) \otimes K_i^-(w)K_i^+(z)
\\ \qquad \hphantom{\equiv}
{}+\frac{z-q^2w}{1-C_2^{-2}w/z}1\otimes K_i^-(z)K_i^+(w)+\frac{w-q^2z}{1-C_2^{2}w/z}1\otimes K_i^-(w)K_i^+(z)
\\ \qquad
{}\equiv \frac{z-q^2w}{1-C_{12}^{-2}w/z}\psi_i^-(C_2z)\psi_i^+\big(C_{12}^{-1}w\big)^{-1}\otimes K_i^-(z)K_i^+(w)
\\ \qquad \hphantom{\equiv}
{}- \frac{w-q^2z}{1-C_{12}^{-2}z/w}\psi_i^-(C_2w)\psi_i^+\big(C_{12}^{-1}z\big)^{-1}\otimes K_i^-(w)K_i^+(z).
\end{gather*}
Interchanging $z$ with $w$ and summing these, we arrive at \eqref{EE} for $i=j$.

Next let us verify the Serre relations \eqref{Serre_3}. By Proposition \ref{prop:wheel}, it suffices to show
that the current $\Delta E_{i,i,i\pm1}(z_1,z_1,w)$ satisfies the
wheel conditions \eqref{wheel1}--\eqref{wheel2}. It is comprised of~27~terms which we group together according to the principal grading
in the first component, defined by
$\pdeg e_i(z)=1$, $\pdeg f_i(z)=-1$, $\pdeg \psi^\pm_i(z)=\pdeg C=0$.
For the terms of principal degrees~$\pm3$ and $\pm2$, the wheel conditions are easily checked.

Let us verify the case of principal degree $1$. There are 6 terms coming from
$\Delta E_{i,i,i\pm1}(z_1,z_2,w)$:
\begin{gather*}
A_1=\phi\, e_i(C_2z_1)e_i(C_2z_2)\ft_{i\pm1}\big(C_2^{-1}w\big)\otimes K_i^-(z_1) K_i^-(z_2) K_{i\pm1}^+(w),
\\
A_2=\phi\, e_i(C_2z_1)e_{i\pm1}(C_2 w)\ft_i\big(C_2^{-1}z_2\big)\otimes K_i^-(z_1)K_{i\pm1}^-(w) K_i^+(z_2) G_{i,i\pm1}(w/z_2),
\\
A_3=\phi\,e_i(C_2z_2)e_{i\pm1}(C_2w)\ft_i\big(C_2^{-1}z_1\big)\otimes K_i^-(z_2) K_{i\pm1}^-(w)K_i^+(z_1) G_{i,i}(z_2/z_1)G_{i,i\pm1}(w/z_1),
\\
A_4=\phi\,e_i(C_2z_1)\otimes K_i^-(z_1)E_i(z_2)E_{i\pm1}(w),
\\
A_5=\phi\,e_i(C_2z_2)\otimes K^-_i(z_2)E_i(z_1)E_{i\pm1}(w)G_{i,i}(z_2/z_1),
\\
A_6=\phi\,e_{i\pm1}(C_2w)\otimes K^-_{i\pm1}(w)E_i(z_1)E_i(z_2)G_{i,i\pm1}(w/z_1)G_{i,i\pm1}(w/z_2),
\end{gather*}
where
\begin{gather*}
\phi=\frac{z_1-C_{12}^2 z_2}{z_1-z_2}\frac{z_1-C_{12}^{-2}z_2}{z_1-z_2}\frac{z_1-q^2z_2}{z_1-z_2}
\frac{z_1-q^{-1}d^{\pm1}w}{z_1-w}\frac{z_2-q^{-1}d^{\pm1}w}{z_1-w}.
\end{gather*}
From \eqref{singE1}--\eqref{singK}, we see that the sum $\sum_{i=1}^6A_i$ has no poles other than $z_1=z_2$, $z_i=w$.
In~fact, the only possible poles at $z_2=C_2^2 z_1$ (resp. $z_2=C_2^{-2}z_1$) arise from $A_2$ and $A_6$
(resp.,~$A_3$ with $A_6$), and they cancel by virtue of \eqref{singE3} and \eqref{singK}.

Let us check \eqref{wheel1}.
Under the specialization $z_2=q^2z_1$, $w=q d^{\mp1}z_1=q^{-1} d^{\mp1}z_2$, each term has a zero due to
the vanishing factors
\begin{gather*}
\big(z_1-q^2z_2\big)G_{i,i}(z_2/z_1),\qquad
z_1-q^{-1}d^{\pm1}w,\qquad
\big(z_2-q^{-1}d^{\pm1}w\big)G_{i,i\pm1}(w/z_2).
\end{gather*}

To check \eqref{wheel2}, set $z_2=C_{12}^2z_1$.
All $A_i$'s vanish at $z_2=C_{12}^2z_1$ with the exception of $A_2$.
Since the latter has the factor $\big(z_2-q^{-1}d^{\pm1}w\big)G_{i,i\pm1}(w/z_2)$,
it also vanishes by setting further $w=q^{-1} d^{\mp1}z_2$.

In the case of principal degree $0$, there are 7 terms:
\begin{gather*}
B_0=\phi\,1\otimes E_i(z_1)E_i(z_2)E_{i\pm1}(w),
\\
B_1=\phi\,e_i(C_2z_1)\ft_i\big(C_2^{-1}z_2\big)\otimes K_i^-(z_1)E_{i\pm1}(w)K^+_i(z_2) G_{i,i\pm1}(w/z_2),
\\
B_2=\phi\,e_i(C_2z_2)\ft_i\big(C_2^{-1}z_1\big)\otimes K^-_i(z_2)E_{i\pm1}(w)K^+_i(z_1) G_{i,i}(z_2/z_1)G_{i,i\pm1}(w/z_1),
\\
B_3=\phi\,e_i(C_2z_1)\ft_{i\pm1}\big(C_2^{-1}w\big)\otimes K_i^-(z_1)E_i(z_2)K^+_{i\pm1}(w),
\\
B_4=\phi\,e_i(C_2z_2)\ft_{i\pm1}\big(C_2^{-1}w\big)\otimes K_i^-(z_2)E_i(z_1)K^+_{i\pm1}(w) G_{i,i}(z_2/z_1),
\\
B_5=\phi\,e_{i\pm1}(C_2w)\ft_i\big(C_2^{-1}z_2\big)\otimes K^-_{i\pm1}(w)E_i(z_1)K^+_i(z_2) G_{i,i\pm1}(w/z_1)G_{i,i\pm1}(w/z_2),
\\
B_6=\phi\,e_{i\pm1}(C_2w)\ft_i\big(C_2^{-1}z_1\big)\otimes K^-_{i\pm1}(w)E_i(z_2)K^+_i(z_1)
G_{i,i}(z_2/z_1) G_{i,i\pm1}(w/z_1)G_{i,i\pm1}(w/z_2).
\end{gather*}
Again possible poles at $z_2=C_2^{\pm2}z_1$ are cancelled among $B_0$, $B_1$, $B_2$ and the sum $\sum_{i=0}^6 B_i$ has no poles
other than $z_1=z_2$, $z_i=w$.

Consider $z_2=q^2z_1$, $w=q d^{\mp1}z_1=q^{-1} d^{\mp1}z_2$. By the assumption on $C_2^2$, $B_0$ vanishes.
The rest of the terms are zero for the same reason as in principal degree $1$.
Next let $z_2=C_{12}^2z_1$. The only term which may survive is $B_1$.
Due to the factor $\big(z_2-q^{-1}d^{\pm1}w\big)G_{i,i\pm1}(w/z_2)$
it also vanishes by setting further $w=q^{-1} d^{\mp1}z_2$.

The case of principal degree $-1$ being similar to principal degree $1$, we omit the details.
Finally the coassociativity and counit property can be readily checked by using \eqref{delta-E}--\eqref{delta-C}.
\end{proof}

\subsection{Representations}%\label{sec:Rep}

Theorem \ref{thm:comod} allows us to construct a large family of admissible representations of $\cK_n$ starting from simple ones.

Algebra $\cK_n$ has a Heisenberg subalgebra $\cH_n$
generated by $H_{i,r}$, $i\in\Z/n\Z$, $r\neq0$, $K^{\pm}_{i,0}$, and~$C^{\pm1}$.
Let $\mathbb{F}^{\ssD}$ be an irreducible representation of $\cH_n$ on which $C$ acts as $q^{-1}$.

\begin{lem}\label{lem:typeCD}
With the following action, $\mathbb{F}^{\ssD}$ is an admissible $\cK_n$ module:
\begin{gather*}
E_i(z)\mapsto 0,\qquad K_i^\pm(z)\mapsto K_i^\pm(z),\qquad
C\mapsto q^{-1}.
\end{gather*}
\end{lem}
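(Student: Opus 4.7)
The plan is to verify the defining relations \eqref{KK1}--\eqref{Serre_3} directly on $\mathbb{F}^{\ssD}$ by exploiting the fact that $E_i(z)$ acts as zero, so most relations collapse, and by showing the specific value $C = q^{-1}$ is precisely what is needed to kill the residual delta-function term in \eqref{EE}.

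First, I would observe that the relations \eqref{KK1} and \eqref{KK2}, which involve only the $K_i^\pm(z)$'s, hold tautologically because by hypothesis $\mathbb{F}^{\ssD}$ is an irreducible representation of the Heisenberg subalgebra $\cH_n \subset \cK_n$; these relations are built into $\cH_n$. Next, I would dispose of all the relations involving at least one $E_i(z)$: relations \eqref{KK3} and \eqref{EE2} hold trivially since both sides vanish, and the Serre relation \eqref{Serre_3} also holds trivially because the left-hand side is zero and the right-hand side contains the factor $E_{i\pm 1}(w) = 0$ in its middle.

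The one nontrivial check is relation \eqref{EE}. For $i \neq j$ both sides vanish. For $i = j$ the left-hand side is zero, so I must show that the right-hand side vanishes as an operator on $\mathbb{F}^{\ssD}$. With $C = q^{-1}$, one has $C^2 = q^{-2}$, and the delta functions $\delta(C^2 z/w) = \delta(q^{-2} z/w)$ and $\delta(C^2 w/z) = \delta(q^{-2} w/z)$ are supported on $z = q^2 w$ and $w = q^2 z$ respectively. On the former locus the scalar prefactor $g_{i,i}(z,w) = z - q^2 w$ vanishes, and on the latter locus $g_{i,i}(w,z) = w - q^2 z$ vanishes. Since the coefficients of both delta functions are zero, the entire right-hand side of \eqref{EE} acts as zero on $\mathbb{F}^{\ssD}$. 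This is the key calculation and the reason the level is forced to be $C = q^{-1}$; it is the only mildly substantive step.

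Finally, I would verify admissibility: the Heisenberg Fock module $\mathbb{F}^{\ssD}$ inherits a $\Z$-grading from the generators $H_{i,-r}$ (assigning it weight $-r$) and a highest-weight decomposition, with finite-dimensional graded components bounded from above by the top degree of the vacuum. This matches the definition of admissible module given in Section 2.2, and the action of $K_i^\pm(z)$ and of the zero operator $E_i(z)$ preserves the grading. The main obstacle is really only the cancellation in \eqref{EE}, which turns on the specific choice $C^2 = q^{-2}$; everything else is formal.
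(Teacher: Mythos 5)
Your proof is correct and follows exactly the same route as the paper's (which simply notes that all relations except \eqref{EE} are obvious and that $g_{i,i}\big(z,C^2z\big)=0$ when $C=q^{-1}$). Your spelled-out verification of the vanishing of the delta-function coefficients on the right-hand side of \eqref{EE} is precisely the paper's one-line observation made explicit.
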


\begin{proof}
The defining relations are obviously satisfied except \eqref{EE}. To see the latter
it suffices to note that $g_{i,i}\big(z,C^{2}z\big)=0$ when $C=q^{-1}$.
\end{proof}

In the next case we need to adjoin $\big(K^{\pm}_{i,0}\big)^{1/2}$ to $\cH_n$ and $\cK_n$. We define $\tilde{K}_i^{\pm}(z)$ by
\begin{gather*}
\tilde{K}_i^{\pm}(z)\tilde{K}_i^{\pm}(q z)=K^\pm_i(z).
\end{gather*}
Let $\mathbb{F}^{\ssB}$ be an irreducible representation of $\cH_n$ on which $C$ acts as $q^{1/2}$.

\begin{lem}\label{lem:typeB}
With the following action, $\mathbb{F}^{\ssB}$ is an admissible $\cK_n$ module:
\begin{gather*}
E_i(z)\mapsto k\tilde{K}^-_i(z)\tilde{K}^+_i(q z),\qquad
K_i^\pm(z)\mapsto K_i^\pm(z),\qquad
C\mapsto q^{1/2},
\end{gather*}
where $k=1/(q^{1/2}-q^{-1/2})$.
\end{lem}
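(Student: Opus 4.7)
The proof parallels that of Lemma \ref{lem:typeCD} but is substantially more involved because $E_i(z)$ now acts nontrivially. Since the Heisenberg subalgebra $\cH_n$ acts on $\mathbb{F}^{\ssB}$ by construction, relations \eqref{KK1} and \eqref{KK2} are immediate. The remaining relations \eqref{KK3}, \eqref{EE}, \eqref{EE2} and \eqref{Serre_3} involve the vertex-like operator $E_i(z) = k \tilde K^-_i(z) \tilde K^+_i(qz)$ and require explicit OPE computations on $\mathbb{F}^{\ssB}$, where $C^2 = q$.

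The first step is to derive exchange relations among the square-root currents $\tilde K^\pm_i(z)$. From the defining factorization $\tilde K^\pm_i(z) \tilde K^\pm_i(qz) = K^\pm_i(z)$ together with the OPE \eqref{KK2} specialized to $C^2 = q$, one extracts a rational function $\tilde G_{i,j}(x)$ satisfying
\[
\tilde K^+_i(z) \tilde K^-_j(w) = \tilde G_{i,j}(w/z)\, \tilde K^-_j(w) \tilde K^+_i(z),
\]
together with a consistency identity of the shape $\tilde G_{i,j}(x)^2 \tilde G_{i,j}(x/q) \tilde G_{i,j}(qx) = G_{i,j}(x) G_{i,j}(qx)$. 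With this in hand, I would verify \eqref{KK3} by moving $K^\pm_i(z) = \tilde K^\pm_i(z)\tilde K^\pm_i(qz)$ across the two factors of $E_j(w)$; the telescoping product of $\tilde G$-factors collapses precisely to $G_{i,j}(w/z)$. Relation \eqref{EE2} is then immediate, since for $i \not\equiv j,j\pm 1$ all relevant OPE factors are trivial.

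For \eqref{EE} I would invoke Proposition \ref{prop:EErel}: it suffices to check the quadratic relation \eqref{ghatEE} together with the residue identity \eqref{quad-res}. The former follows by a direct OPE computation using the exchange relation above. The latter is the crux of the proof: setting $w = C^2 z = qz$ in $E_i(z) E_i(w) = k^2 \tilde K^-_i(z) \tilde K^+_i(qz) \tilde K^-_i(w) \tilde K^+_i(qw)$, one finds a simple pole in the rearrangement of $\tilde K^+_i(qz)$ past $\tilde K^-_i(w)$, and its residue reconstructs $K^-_i(z) K^+_i(qz)$. The specific value $k = 1/(q^{1/2}-q^{-1/2})$ is then forced by matching $k^2$ times the residue scalar against the prescribed coefficient $\tfrac{1}{q-q^{-1}} \tfrac{1+C^2}{1-C^2} \tfrac{1-q^2 C^2}{1-C^2}$ at $C^2 = q$; the factorization $(q-q^{-1}) = (q^{1/2}-q^{-1/2})(q^{1/2}+q^{-1/2})$ is what makes this work.

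Finally, the Serre relations \eqref{Serre_3} I would handle via Proposition \ref{prop:wheel}, reducing to the wheel conditions \eqref{wheel1} and \eqref{wheel2}, which at $C^2 = q$ become zero conditions at $(z, q^2 z, q d^{\mp 1} z)$ and $(z, qz, d^{\mp 1} z)$ respectively. The symmetrized current $E_{i,i,i\pm 1}(z_1,z_2,w)$ evaluates on $\mathbb{F}^{\ssB}$ to a normal-ordered product of $\tilde K^\pm$'s multiplied by an explicit scalar OPE factor; the requisite vanishings follow from zeros of this scalar at the specialized arguments. The principal obstacle will be the third step: carefully tracking the pole structure in the $\tilde K^+\tilde K^-$ rearrangement and verifying that the algebraic dependence on $q$ lines up exactly with \eqref{quad-res}, thereby pinning down the scalar $k$.
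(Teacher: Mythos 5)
Your plan follows essentially the same route as the paper's proof: factor $G_{i,j}(x)=\tilde G_{i,j}(x)\tilde G_{i,j}\big(q^{-1}x\big)$ to get the exchange relation $\tilde K^+_i(z)\tilde K^-_j(w)=\tilde K^-_j(w)\tilde K^+_i(z)\tilde G_{i,j}(w/z)$, verify \eqref{EE} through Proposition \ref{prop:EErel} by evaluating $E_{i,i}(z,w)$ at $w=C^2z=qz$ (which fixes $k$), and reduce the Serre relations via Proposition \ref{prop:wheel} to the wheel conditions, which hold because the scalar OPE prefactor of $E_{i,i,i\pm1}(z_1,z_2,w)$ vanishes at the specialized points. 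The details you sketch (the consistency identity for $\tilde G_{i,j}$, the telescoping in \eqref{KK3}, and the specializations $(z,q^2z,qd^{\mp1}z)$ and $(z,qz,d^{\mp1}z)$ at $C^2=q$) all check out against the paper.
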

\begin{proof}
We can write $G_{i,j}(x)=\tilde{G}_{i,j}(x)\tilde{G}_{i,j}(q^{-1}x)$, where
\begin{gather*}
\tilde{G}_{i,i}(x)=q\frac{1-q^{-1}x}{1-x}\frac{1-qx}{1-q^2x},\qquad
\tilde{G}_{i,i\pm 1}(x)=q^{-1/2}\frac{1-qd^{\pm1}x}{1-d^{\pm1}x},
\end{gather*}
and $\tilde{G}_{i,j}(x)=1$ otherwise.
The assertion can be checked by noting
\begin{gather*}
\tilde{K}^+_i(z)\tilde{K}_j^-(w)=\tilde{K}_j^-(w)\tilde{K}^+_i(z)\tilde{G}_{i,j}(w/z).
\end{gather*}
We use also
\begin{gather*}
E_{i,i}(z,w)\mapsto
q k^2\tilde{K}^-_i(z)\tilde{K}^-_i(w)\tilde{K}^+_i(qz)\tilde{K}^+_i(qw)
\frac{1-q^{-2}w/z}{1-w/z}\frac{1-q^{2}w/z}{1-w/z}\,
\end{gather*}
and Proposition \ref{prop:EErel} to check the coefficients of delta functions in \eqref{EE},
and use Proposition \ref{prop:wheel} along with
\begin{align*}
E_{i,i,i\pm1}(z_1, z_2, w) \mapsto {}&
k^3\tilde{K}^-_i(z_1)\tilde{K}^-_i(z_2)\tilde{K}^-_{i\pm1}(w)
\tilde{K}^+_i(qz_1)\tilde{K}^+_i(qz_2)\tilde{K}^+_{i\pm1}(qw)
\\ \qquad
&\times \frac{1-q^{-2}z_2/z_1}{1-z_2/z_1}\frac{1-q^{2}z_2/z_1}{1-z_2/z_1}
\frac{1-d^{\pm1}w/z_1}{1-w/z_1}\frac{1-d^{\pm1}w/z_2}{1-w/z_2}\,
\end{align*}
for the Serre relations \eqref{Serre_3}.
\end{proof}

Together with Theorem \ref{thm:comod}, the above lemmas imply
\begin{cor}%\label{cor:typeCD}
Let $V$ be an admissible $\E_n$ module, where $C$ acts as scalar $C_1$ with $q^{-2}C_1^2\neq \pm1,q^{\pm2}$.
Then $V\otimes \mathbb{F}^{\ssD}$ is an admissible $\cK_n$ module with the action
\begin{gather*}
\Delta E_i(z)=e_i\big(q^{-1}z\big)\otimes K_i^-(z)+ \ft_i(qz)\otimes K^+_i(z),
\\
\Delta K_i^+(z)=\psi_i^+\big(q C_1^{-1}z\big)^{-1}\otimes K_i^+(z),
\qquad
\Delta K_i^-(z)=\psi_i^-\big(q^{-1}z\big)\otimes K_i^-(z),
\\
\Delta C= C_1q^{-1}.
\end{gather*}
\end{cor}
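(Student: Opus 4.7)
The plan is to obtain the corollary as a direct specialization of Theorem \ref{thm:comod} with $W = \mathbb{F}^{\ssD}$. By Lemma \ref{lem:typeCD}, $\mathbb{F}^{\ssD}$ is an admissible $\cK_n$ module on which each current $E_i(z)$ acts as zero and the central element $C$ acts as $C_2 = q^{-1}$. Hence $C_{12} = C_1 C_2 = C_1 q^{-1}$, and the hypothesis $C_{12}^2 \neq \pm 1, q^{\pm 2}$ of Theorem \ref{thm:comod} becomes precisely the standing assumption $q^{-2} C_1^2 \neq \pm 1, q^{\pm 2}$ of the corollary. Substituting $C_2 = q^{-1}$ and $E_i(z) \mapsto 0$ on the second tensor factor into \eqref{delta-E}--\eqref{delta-C} immediately yields the claimed formulas.

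The only subtlety is that Theorem \ref{thm:comod} as stated also imposes $C_2^2 \neq \pm 1, q^{\pm 2}$, which fails here because $C_2^2 = q^{-2}$. I would argue that this condition is unnecessary once $E_i(z)$ annihilates $W$. Tracing through the proof of Theorem \ref{thm:comod}, the restriction on $C_2^2$ enters only in order to (i) apply Proposition \ref{prop:EErel} to the action of $E_i(z) E_i(w)$ on $W$ and thereby realize \eqref{EE}, and (ii) control the principal-degree-zero term $B_0 = \phi \cdot 1 \otimes E_i(z_1) E_i(z_2) E_{i \pm 1}(w)$ in the wheel-condition computation. Both uses become vacuous for $W = \mathbb{F}^{\ssD}$: the $E_i E_i$ side of \eqref{EE} on $W$ is identically zero, and $B_0$ vanishes automatically.

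The remaining checks in the proof of Theorem \ref{thm:comod}---the relations \eqref{KK1}--\eqref{KK3}, the mixed quadratic relation \eqref{EE} for $i \neq j$ (which used \eqref{ftil2}), the $i = j$ case of \eqref{EE} now reduced to two surviving cross-terms of the form $e_i \otimes K_i^-$ paired with $\ft_i \otimes K_i^+$, and the wheel conditions at principal degrees $\pm 3, \pm 2, \pm 1$ together with the surviving contributions at degree zero---go through verbatim, since they depend only on the $\E_n$ action on $V$, the quadratic relations \eqref{singE1}--\eqref{singE3}, and the $K_i^\pm$ relations \eqref{KK1}--\eqref{KK3} on $W$. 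Finally, the coassociativity and the counit property transfer from Theorem \ref{thm:comod} without change.

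The main---indeed the only---obstacle is the careful bookkeeping in the previous two paragraphs: one must verify that every step of the proof of Theorem \ref{thm:comod} which invokes $C_2^2 \neq \pm 1, q^{\pm 2}$ either survives or trivializes under the hypothesis $E_i(z)|_{\mathbb{F}^{\ssD}} = 0$. Once this verification is in hand, Theorem \ref{thm:comod} applies with the weakened hypothesis and the corollary follows immediately.
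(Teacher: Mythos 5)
Your proposal is correct and follows essentially the same route as the paper: the paper's proof likewise invokes Theorem \ref{thm:comod} with $W=\mathbb{F}^{\ssD}$, notes that the hypothesis $C_2^2\neq\pm1,q^{\pm2}$ fails since $C_2^2=q^{-2}$, and observes that the only place this hypothesis is genuinely needed is the term $B_0$ in the principal-degree-zero part of the wheel-condition check, which vanishes identically because $E_i(z)$ acts as zero on $\mathbb{F}^{\ssD}$. Your additional bookkeeping (e.g., that the application of Proposition \ref{prop:EErel} on $W$ also trivializes) is consistent with, and slightly more explicit than, the paper's one-line justification.
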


\begin{proof}
Since $C_2^2=q^{-2}$, Theorem \ref{thm:comod} does not literally apply. Nevertheless,
in the proof there, the only place where this matters is the term $B_0$ in degree $0$.
Since this term is identically zero in $\mathbb{F}^{\ssD}$, the same proof works.
\end{proof}

\begin{cor}%\label{cor:typeB}
Let $V$ be an admissible $\E_n$ module, where $C$ acts as scalar $C_1$ with $q C_1^2\neq \pm1,q^{\pm2}$.
Then $V\otimes \mathbb{F}^{\ssB}$ is an admissible $\cK_n$ module with the action
\begin{gather*}
\Delta E_i(z)= e_i\big(q^{1/2}z\big)\otimes K_i^-(z)
+1\otimes k\tilde{K}^-_i(z) \tilde{K}^+_i(qz)
+\ft_i\big(q^{-1/2}z\big)\otimes K^+_i(z),
\\
\Delta K_i^+(z)= \psi_i^+\big(q^{-1/2}C_1^{-1}z\big)^{-1}\otimes K_i^+(z),
\qquad
\Delta K_i^-(z)=\psi_i^-\big(q^{1/2}z\big)\otimes K_i^-(z),
\\
\Delta C=C_1q^{1/2},
\end{gather*}
where $\tilde{K}^\pm_i(z)$ and $k$ are as in Lemma~$\ref{lem:typeB}$.
\end{cor}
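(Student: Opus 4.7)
The plan is to apply Theorem~\ref{thm:comod} directly, taking $W=\mathbb{F}^{\ssB}$ with the admissible $\cK_n$ module structure supplied by Lemma~\ref{lem:typeB}. On that module $C$ acts as the scalar $C_2=q^{1/2}$, so that $C_{12}=C_1q^{1/2}$ reproduces the action of $C$ stated in the corollary. The content of the proof is then essentially just a substitution.

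First I verify that the two generic hypotheses of Theorem~\ref{thm:comod} are in force. The condition $C_{12}^2\neq \pm1,q^{\pm2}$ is literally the assumption $qC_1^2\neq \pm1,q^{\pm2}$ of the corollary. The condition $C_2^2\neq \pm1,q^{\pm2}$ becomes $q\neq \pm1,q^{\pm2}$, and all four exclusions follow from the standing Notation: $q=\pm 1$ forces $q_2=q^2=1$, $q=q^{2}$ forces $q=1$, and $q=q^{-2}$ forces $q_2^{3}=q^{6}=1$, each contradicting the requirement that $q_1^iq_2^j=1$ implies $i=j=0$. Thus, unlike the preceding corollary for $\mathbb{F}^{\ssD}$ where $C_2^2=q^{-2}$ sat on the forbidden locus, here no edge case in Theorem~\ref{thm:comod} needs to be revisited.

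Next I substitute the action of $E_i(z)$, $K^{\pm}_i(z)$ on $\mathbb{F}^{\ssB}$ from Lemma~\ref{lem:typeB} into the coproduct formulas \eqref{delta-E}--\eqref{delta-C}. With $C_2=q^{1/2}$, the outer summands in \eqref{delta-E} turn into $e_i(q^{1/2}z)\otimes K_i^-(z)$ and $\ft_i(q^{-1/2}z)\otimes K_i^+(z)$, while the middle summand $1\otimes E_i(z)$ becomes $1\otimes k\tilde K_i^-(z)\tilde K_i^+(qz)$ by Lemma~\ref{lem:typeB}. Formulas \eqref{delta-Kp}--\eqref{delta-C} specialize immediately and match the displayed expressions in the corollary. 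Admissibility of $V\otimes\mathbb{F}^{\ssB}$ as a graded module with degrees bounded from above and finite-dimensional components is standard and already implicit in the setting of Theorem~\ref{thm:comod}.

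I do not expect a real obstacle: all the heavy lifting—the compatibility with \eqref{KK1}--\eqref{EE2}, the verification of the quadratic $EE$-relation via \eqref{singE1}--\eqref{singK}, and the seven- and six-term cancellations that yield the wheel conditions \eqref{wheel1}--\eqref{wheel2}—was carried out in the proof of Theorem~\ref{thm:comod}. The only place that could in principle demand extra care is the degree~$0$ wheel check, where the factor $\phi$ must genuinely kill the term $B_0$ under the two specializations; but with $C_{12}^2,C_2^2$ avoiding $\pm1,q^{\pm2}$ as above, the argument applies verbatim. The corollary therefore follows as a direct specialization.
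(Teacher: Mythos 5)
Your proposal is correct and follows exactly the route the paper intends: the corollary is stated as an immediate consequence of Theorem~\ref{thm:comod} applied with $W=\mathbb{F}^{\ssB}$ from Lemma~\ref{lem:typeB}, and your substitution with $C_2=q^{1/2}$ reproduces the displayed formulas. Your explicit check that $C_2^2=q$ avoids the forbidden values $\pm1,q^{\pm2}$ (so that, unlike the $\mathbb{F}^{\ssD}$ case, no special treatment of the term $B_0$ is needed) is a point the paper leaves implicit, and it is argued correctly from the nondegeneracy assumption on $q_1$, $q_2$.
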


\begin{Example}
Consider the $\cK_n$ module
\begin{gather*}
\mathbb{F}(u_1)\otimes\cdots\otimes\mathbb{F}(u_\ell)\otimes \mathbb{F}^{\ssD},
\qquad \ell\ge3.
\end{gather*}
The central element acts as $C^2=q^{2\ell-2}$.
The current $E_i(z)$ is represented as a sum of $2\ell$ vertex operators:
\begin{gather}
z^{-1}E_i(z)=
-\sum_{k=1}^\ell q^{k-\ell}u_k\Lambda_{i, k}(z)
+\sum_{k=1}^\ell q^{-k+\ell-1}u_k^{-1}\Lambda_{i,\bar k}(z),
\label{sumLambda}
\end{gather}
where
\begin{gather*}
\Lambda_{i,k}(z)=\overbrace{1\otimes \cdots\otimes 1}^{k-1}\otimes
{:}V_i\big(q^{k-\ell+1}z\big)^{-1}{:}\otimes \psi_i^+\big(q^{k-\ell+1}z\big) ^{-1}\otimes
\cdots \otimes \psi^+_i(z)^{-1}\otimes K_i^+(z),
\\
\Lambda_{i,\bar k}(z)=\overbrace{1\otimes \cdots\otimes 1}^{k-1}\otimes
V_i\big(q^{-k+\ell-1}z\big)\otimes\psi_i^-\big(q^{-k+\ell-2}z\big)\otimes
\cdots \otimes \psi^-_i\big(q^{-1}z\big)\otimes K_i^-(z).
\end{gather*}
In the right-hand side,
$\psi^\pm_i(z)$ or $K^\pm_i(z)$ stand for their action on $\mathbb{F}(u_i)$ or $\mathbb{F}^{\ssD}$, respectively.

In what follows we introduce an ordering $1\!\prec \cdots \!\prec \ell\!\prec \bar\ell\!\prec \cdots\!\prec \bar1$
to the set $\{1,\dots,\ell,\bar\ell,\dots,\bar 1\}$. We use letters $a,b,\dots$
for elements of the latter, with the convention that $\bar{\bar a}=a$.
Each $\Lambda_{i,a}(z)$ is a product of oscillator part $\Lambda^{\rm osc}_{i,a}(z)$ and the zero mode part.
The contractions of the former are given by the following table.

\begin{table}[H]\renewcommand{\arraystretch}{1.7}\setlength{\tabcolsep}{2.5pt}
\centering
\caption{Contractions $\cont{\Lambda^{\rm osc}_{i,a}(z)}{\Lambda^{\rm osc}_{j,b}(w)}$ with $x=w/z$}
{\small\begin{tabular}{c|c|c|c|c|c}
\hline
& $\begin{array}{c}a\prec b\\[-2ex] b\neq \bar a\end{array}$
& $a=b$
&\!\!$\begin{array}{c}a\succ b \\[-2ex] b\neq \bar a\end{array}$\!\!
& $\begin{array}{c}a\prec b\\[-2ex] b=\bar a\end{array}$
& $\begin{array}{c}a\succ b\\[-2ex] b=\bar a\end{array}$
\\
\hline
$i\equiv j$ & $\dfrac{1\!-\!q_2^{-1}x}{1-q_2x}$ & $(1\!-\!x)(1\!-\!q_2^{-1}x)$ & $1$ &
\!\!$\begin{array}{c}\dfrac{1-q_2^{-1}x}{1-q_2x}\times
\\[1ex]
\dfrac{1}{(1!-\!C^2q_2^{1-a}x)(1\!-\!C^2q_2^{2-a}x)}\end{array}$\!\!
& $\dfrac{1}{(1\!-\!C^{-2}q_2^{b}x)(1\!-\!C^{-2}q_2^{b-1}x)}$
\\
\hline
$i\!+\!1\!\equiv\! j$ & ${\dfrac{1\!-\!q_3^{-1}x}{1-q_1x}}$ & $\dfrac{1}{1-q_1x}$ & $1$ &
$\dfrac{1-q_3^{-1}x}{1-q_1x}$ $(1-C^2q_2^{1-a}q_1x)$
& $1-C^{-2}q_2^{b-1}q_3^{-1}x$
\\
\hline
\end{tabular}}
\end{table}

The contractions in the case $i-1\equiv j$ can be obtained by switching $q_1$ and $q_3$.

Using this table one can check directly the wheel conditions, namely that $E_{i,i,i+1}(z_1,z_2,w)$ represented by
\begin{gather*}
\frac{1-C^2z_2/z_1}{1-z_2/z_1}\frac{1-C^{-2}z_2/z_1}{1-z_2/z_1}\frac{1-q_2z_2/z_1}{1-z_2/z_1}
\frac{1-q_1w/z_1}{1-w/z_1}\frac{1-q_1w/z_2}{1-w/z_2}
\, \Lambda_{i,a}(z_1)\Lambda_{i,b}(z_2)\Lambda_{i+1,c}(w)
\end{gather*}
vanishes for all $a,b,c\in\{1,\dots,\ell,\overline{\ell},\dots,\bar 1\}$ under the following specializations:
\begin{enumerate}\itemsep=0pt
\item[$(i)$] $(z_1,z_2,w)=\big(z,q_{2}z,q_1^{-1}z\big)$,
\item[$(ii)$] $(z_1,z_2,w)=\big(z,C^{2}z,C^{2}q_3z\big)$,
\item[$(iii)$] $(z_1,z_2,w)=\big(z,C^{-2}z,C^{-2}q_1^{-1}z\big)$.
\end{enumerate}

\medskip
\noindent{\bf Case $\boldsymbol{(i)}$.} This follows from
\begin{gather*}
\cont{\Lambda_{i,a}(z_1)}{\Lambda_{i,b}(z_2)}\bigl|_{z_2=q_2 z_1}=0
\qquad \text{if}\quad a\preceq b,
\\
\cont{\Lambda_{i,b}(z_2)}{\Lambda_{i+1,c}(w)}\bigl|_{w=q_3 z_2}=0
\qquad \text{if}\quad b\prec c,
\\
(1-q_1w/z_1)\cont{\Lambda_{i,a}(z_1)}{\Lambda_{i+1,c}(w)}\bigl|_{w=q_1^{-1} z_1}=0
\qquad \text{if}\quad a\succ c.
\end{gather*}

\medskip
\noindent{\bf Case $\boldsymbol{(ii)}$.} This follows from
\begin{gather*}
\big(1-C^{-2}z_2/z_1\big)\cont{\Lambda_{i,a}(z_1)}{\Lambda_{i,b}(z_2)}\bigl|_{z_2=C^{2}z_1}=0
\qquad \text{if}\quad (a,b)\neq (\bar 1,1),
\\
\cont{\Lambda_{i,1}(z_2)}{\Lambda_{i+1,c}(w)}\bigl|_{w=q_3z_2}=0
\qquad \text{if}\quad c\neq 1,
\\
\cont{\Lambda_{i,\bar 1}(z_1)}{\Lambda_{i+1, 1}(w)}\bigl|_{w=C^{2}q_3z_1}=0.
\end{gather*}

\medskip
\noindent{\bf Case $\boldsymbol{(iii)}$.}
This follows from
\begin{gather*}
\big(1-C^2z_2/z_1\big)\cont{\Lambda_{i,a}(z_1)}{\Lambda_{i,b}(z_2)}\bigl|_{z_2=C^{-2}z_1}=0
\qquad \text{if}\quad (a,b)\neq (1,\bar 1),
\\
(1-q_1w/z_2)\cont{\Lambda_{i,\bar 1}(z_2)}{\Lambda_{i+1,c}(w)}\bigl|_{w=q_1^{-1}z_2}=0
\qquad \text{if}\quad c\neq \bar 1,
\\
\cont{\Lambda_{i,1}(z_1)}{\Lambda_{i+1,\bar 1}(w)}\bigl|_{w=C^{-2}q_1^{-1}z_1}=0.
\end{gather*}
Similarly for $E_{i,i,i-1}(z_1,z_2,w)$.
\end{Example}

\begin{rem}
As it turns out, the currents \eqref{sumLambda} commute with
a system of screening operators derived from quantum toroidal
algebra of type $D$. We hope to address this point elsewhere.
\end{rem}

\section{Integrals of motion}\label{sec:IM}
We continue to work with admissible representations of $\cK_n$ on which $C$ acts as a scalar.
Our goal in this section is to introduce a family of commuting operators acting on each such representation
(with mild restrictions on parameters),
which we call integrals of motion (IMs). Loosely speaking, we may think of them as generators of
a commutative subalgebra of $\cK_n$.

The IMs will depend on $q$, $d$, $C$ as well as additional arbitrary parameters
\begin{gather}
\mu_1,\dots,\mu_n\in\C,\qquad \sum_{i=1}^n\mu_i=0.\label{mui}
\end{gather}
In addition, they depend also on an ``elliptic'' parameter $p$. However,
our experience with the algebra $\cK_1$
tells that the IMs exist only when $p$ and $C$ are related in a special way.
Guided by the result in \cite{FJMV}, we shall impose the relation
\begin{gather*}
C^2=pq^2
\end{gather*}
and make the following assumption:
\begin{gather}
|p|<1,\qquad
\big|qd^{\pm1}\big|<1,\qquad \big|pq^{-2}\big|<1,\qquad \big|pq^{-1}d^{\pm1}\big|<1.
\label{assume-para}
\end{gather}
In particular $|q^2|<1$. We use parameters $\tau\in\C $ with $\Im \tau>0$ and $\beta,\gamma\in\C$ given by
\begin{gather}
p={\rm e}^{-2\pi{\rm i} /\tau},\qquad q^2=p^\gamma,\qquad d=p^\beta.
\label{gamma-beta}
\end{gather}

We use also $\theta(u)=p^{u^2/2-u/2}\Theta_p(p^u)$, which satisfies
\begin{gather}
\theta(u+1)=-\theta(u),\qquad \theta(u+\tau)=-{\rm e}^{-2\pi {\rm i} u-\pi {\rm i}\tau}\theta(u).
\label{theta-per0}
\end{gather}

\subsection{Dressed currents}%\label{sec:integralsIM}

A basic constituent of IMs is the dressed currents, obtained by modifying the currents $E_i(z)$ of~$\cK_n$.

In the following we %fix the logarithm of $p$ once for all, and
define operators $H_{i,0}$ by
$(K_{i,0}^-)^{-1}K_{i,0}^+=q^{2H_{i,0}}$.
The dressed currents of~$\cK_n$ are defined by
\begin{gather*}
\bE_i(z)=\bK_i^-(z)^{-1}E_i(z) z^{-\gamma H_{i,0}-\gamma+\mu_i},\\
\bK_i^-(z)=\exp\bigg({-}\big(q-q^{-1}\big)\sum_{r>0}\frac{H_{i,-r}}{1-p^r}z^r\bigg),
\end{gather*}
where $\gamma$ and $\mu_i$ are as in \eqref{gamma-beta} and \eqref{mui}.

The dressed currents have well-defined matrix coefficients on admissible representations.
They satisfy the exchange relations as meromorphic functions
\begin{gather*}
\lambda_{i,j}(z_2/z_1)\bE_i(z_1)\bE_j(z_2)=\lambda_{j,i}(z_1/z_2)\bE_j(z_2)\bE_i(z_1),
\end{gather*}
with
\begin{gather*}
\lambda_{i,i}(x)=x^{-\gamma}
\frac{1-C^2x}{1-x}\frac{1-C^{-2}x}{1-x}\frac{1-q^2x}{1-x}
\frac{\big(q^{-2}x;p\big)_\infty}{(q^{2}x;p)_\infty},%\label{fii}
\\
\lambda_{i,i\pm1}(x)=x^{\gamma/2}
d^{\mp1/2} \frac{1-q^{-1}d^{\pm1}x}{1-x}
\frac{\big(q d^{\pm1}x;p\big)_\infty}{\big(q^{-1}d^{\pm1}x;p\big)_\infty},
%\label{fiipm1}
\end{gather*}
and $\lambda_{i,j}(x)=1$ in all other cases.
Up to a power of $x$, $\lambda_{i,j}(x)$ specializes to $\lambda^0_{i,j}(x)$ at $p=0$.
Each matrix coefficient of products of dressed currents is a meromorphic function (up to an overall power function).
We have ``elliptic'' exchange relations as meromorphic functions
\begin{gather*}
\bE_{i}(z)\bE_i(w)=\bE_i(w)\bE_{i}(z)\frac{\theta(v-u+\gamma)}{\theta(v-u-\gamma)},
\\
\bE_{i}(z)\bE_{i\pm1}(w)=\bE_{i\pm1}(w)\bE_{i}(z)
q^{\pm 2\beta} \frac{\theta(v-u\pm \beta-\gamma/2)}{\theta(v-u\pm \beta+\gamma/2)},
\end{gather*}
where $z=p^u$, $w=p^v$.

As before we define
\begin{gather}
\bE_{i_1,\dots,i_N}(z_1,\dots, z_N)=\prod_{1\le r<s\le N} \lambda_{i_r,i_s}(z_s/z_r)
 \bE_{i_1}(z_1)\cdots \bE_{i_N}(z_N).
\label{btEiii}
\end{gather}
Then it has the same symmetry as \eqref{E-symm}, and we have
\begin{gather*}
\bE_{i_1,\dots,i_N}(z_1,\dots, z_N)=\prod_{r<s}z_s^{-\gamma(\bar\alpha_{i_r},\bar\alpha_{i_s})}
\prod_{s=1}^N \bK_{i_s}^-(z_{s})^{-1} E_{i_1,\dots,i_N}(z_1,\dots, z_N) \prod_{s=1}^N z_s^{-\gamma H_{i_s,0}-\gamma+\mu_{i_s}}.
\end{gather*}
The product $\prod_{s=1}^N \bK_{i_s}^-(z_{s})^{-1}$ contains only $\{H_{i,-r}\}$ with $r>0$.
Its contribution to any matrix coefficient of $\bE_{i_1,\dots,i_N}(z_1,\dots, z_N)$
is a Laurent polynomial in $z_s$'s for degree reasons.
Hence the equality above shows that, up to an overall power of $z_i$'s,
each matrix coefficient of operator~\eqref{btEiii}
has the same functional form \eqref{Laurent} as that of
$E_{i_1,\dots,i_N}(z_1,\dots, z_N)$. In particular the wheel conditions \eqref{wheel1}--\eqref{wheel3} are valid also for
\eqref{btEiii}.

\subsection{Integrals of motion}\label{subsec:IM}

{\sloppy
Given parameters \eqref{mui},
let ${X}_n={X}_n(\mu_1,\dots,\mu_n)$ denote the space of entire functions $\vartheta(u_1,\dots,u_n)$ satisfying
\begin{gather}
\vartheta(u_1,\dots,u_i+1,\dots,u_n)=\vartheta(u_1,\dots,u_i,\dots,u_n),
\label{theta-per1}
\\
\vartheta(u_1,\dots,u_i+\tau,\dots,u_n)={\rm e}^{-2\pi {\rm i}(2u_i-u_{i-1}-u_{i+1}-\mu_i+\tau)}\,
\vartheta(u_1,\dots,u_i,\dots,u_n), %\quad (p={\rm e}^{-2\pi {\rm i}/\tau}).
\label{theta-per2}
\end{gather}
where $u_{i+n}=u_i$.
We have $\dim X_n=n$.}

For $M\ge1$ and $\vartheta\in X_n$, we introduce the kernel functions
\begin{gather}
h_{M}(\bu_1,\dots, \bu_n;\vartheta)\nn
\\ \qquad
{}=\frac{\vartheta(\ovu_1,\dots,\ovu_n)
{\prod_{i=1}^{n}\prod_{1\le a<b\le M}\theta(u_{i,a}-u_{i,b})\theta(u_{i,a}-u_{i,b}-\gamma)}}
{{\prod_{i=1}^{n-1}\prod_{1\le a,b\le M}\theta\big(u_{i,a}-u_{i+1,b}-\beta-\frac{\gamma}{2}\big)
\prod_{1\le a,b\le M}\theta\big(u_{n,a}-u_{1,b}-\beta+\frac{\gamma}{2}\big)}},
\label{kernel_h}
\end{gather}
where
\begin{gather*}
\bu_i=(u_{i,1},\dots,u_{i,M}),\qquad \ovu_i=\sum_{a=1}^M u_{i,a}.
\end{gather*}
We define the integrals of motion $\bG_M(\vartheta)$ of $\cK_n$ by
\begin{gather}
\bG_M(\vartheta)=\int\!\!\cdots\!\!\int
\prod_{1\le i\le n}^{\curvearrowleft}\prod_{1\le a\le M}^{\curvearrowleft}\bE_{i}(x_{i,a})
\cdot
h_{M}(\bu_1,\dots, \bu_n;\vartheta)\ \prod_{i=1}^n\prod_{a=1}^M \frac{{\rm d}x_{i,a}}{x_{i,a}}.
\label{IM1}
\end{gather}
Here $x_{i,a}=p^{u_{i,a}}$, and
the symbol $\prod_{1\le i\le N}^{\curvearrowleft}$ stands for ordered product
\[
\prod_{1\le i\le N}^{\curvearrowleft}A_i=A_NA_{N-1}\cdots A_1.
\]
Even though individual factors contain fractional powers,
the integrand of \eqref{IM1} comprises only integer powers with respect to each $x_{i,a}$.
The integrals are taken over a common circle $|x_{i,a}|=R$. The result is independent of the choice of $R>0$.

Formally, \eqref{IM1} looks identical to that of the integrals of motion of type A \cite{FJM2,FKSW,KS}.
In~particular the kernel functions \eqref{kernel_h} are the same as those used there. The difference is hidden in the pole structure of
products of dressed currents.

For later use we rewrite the integrand in terms of the current in \eqref{btEiii},
\begin{gather*}
\bE^{(M)}(\bx_1,\dots,\bx_n)=\bE_{1,\dots,1,\dots,n,\dots,n}(\bx_1,\dots,\bx_n),
%\label{mtEM}
\end{gather*}
where $\bx_i=(x_{i,1},\dots,x_{i,M})$.
In the right-hand side, each index $i$ %in the suffix
occurs $M$ times.

\begin{prop}
Set $x=p^u$ and
\begin{gather*}
\xi(u)=p^{u^2/2}(1-x)\big(x,p^2q^2x,p;p\big)_\infty,
\\
\eta(u)=p^{u^2/2-(\beta+1/2)u}\frac{d^{-1/2}}{1-x^{-1}}\big(qd^{-1}x,qdx^{-1},p;p\big)_\infty.
\end{gather*}
Then the IM's can be expressed as
\begin{gather}
\bG_M(\vartheta)=c_M \int\!\!\cdots\!\!\int
\bE^{(M)}(\bx_1,\dots,\bx_n)\,k_M(\bu_1,\dots,\bu_n;\vartheta)
\prod_{i=1}^n\prod_{a=1}^M\frac{{\rm d} x_{i,a}}{x_{i,a}},
\label{IMEM}
\end{gather}
where $c_M$ is a constant, and
\begin{gather*}
k_M(\bu_1,\dots,\bu_n;\vartheta)=
\vartheta(\ovu_1,\dots,\ovu_n)\,
\frac{\prod_{i=1}^n\prod_{1\le a\neq b\le M}
\xi(u_{i,a}-u_{i,b})}
{\prod_{i=1}^n\prod_{1\le a,b\le M}
\eta(u_{i,a}-u_{i+1,b})},
%\label{kernel-k}
\\
x_{i,a}=p^{u_{i,a}},\qquad \ovu_i=\sum_{a=1}^M u_{i,a}.
\end{gather*}
Here and after, we set $u_{n+1,a}=u_{1,a}$.
\end{prop}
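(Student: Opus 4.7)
The plan is to treat the claim as a formal rewriting between two integrand expressions for the same IM. Using the definition \eqref{btEiii} together with the symmetry \eqref{E-symm} (which extends to the elliptic dressed currents, since they obey exchange relations of the same shape as \eqref{f0EE}), I first rewrite
\[
\bE^{(M)}(\bx_1,\ldots,\bx_n)=\Lambda(\bx)\cdot \prod_{1\le i\le n}^{\curvearrowleft}\prod_{1\le a\le M}^{\curvearrowleft}\bE_i(x_{i,a}),\qquad \Lambda(\bx):=\prod_{r<s}\lambda_{i_r,i_s}(z_s/z_r),
\]
with $(i_r,z_r)$ chosen to match the ordered product appearing in \eqref{IM1}. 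Substituting into \eqref{IM1} and comparing with \eqref{IMEM}, the proposition reduces to the scalar identity $h_M(\bu;\vartheta)=c_M\,\Lambda(\bx)\,k_M(\bu;\vartheta)$; the common factor $\vartheta(\ovu_1,\ldots,\ovu_n)$ cancels.

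Next I split $\Lambda(\bx)$ into contributions of three pair-types: same-color $\lambda_{i,i}$ pairs (when $i_r=i_s$), bulk adjacent $\lambda_{i+1,i}$ pairs (for $1\le i\le n-1$), and cyclic wrap-around $\lambda_{0,1}$ pairs (when $i_r=n$, $i_s=1$). Matching against the corresponding $\theta$-factors in $h_M$ and $\xi,\eta$ factors in $k_M$, the scalar identity reduces to three elementary theta identities, one per pair-type:
\[
\theta(u)\theta(u-\gamma)=C_1\,\lambda_{i,i}(p^u)\,\xi(u)\xi(-u),
\]
\[
\theta(u-\beta-\gamma/2)\,\lambda_{i+1,i}(p^u)=C_2\,\eta(u),
\]
\[
\theta(u-\beta+\gamma/2)\,\lambda_{0,1}(p^{-u})=C_3\,\eta(u),
\]
for constants $C_1,C_2,C_3$ independent of $u$. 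Each is verified by a direct expansion: write $\theta(u)=p^{u^2/2-u/2}(p^u,p^{1-u},p;p)_\infty$, expand the $\lambda$'s and $\xi,\eta$ as $p$-Pochhammer products, and use the relation $C^2=pq^2$, the recursion $(a;p)_\infty=(1-a)(pa;p)_\infty$, and Jacobi's inversion $\Theta_p(z^{-1})=-z^{-1}\Theta_p(z)$ (needed for the cyclic case). One then checks that the quadratic $p$-exponent collapses to a $u$-independent constant after combining the contributions from $\theta$ and the monomial prefactors $y^{-\gamma}$ or $y^{\pm\gamma/2}$ in the $\lambda$-factors.

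The main technical obstacle is the same-color identity, where one must track the interaction of $(q^{\pm2}y;p)_\infty$, $(pq^{\pm2}y;p)_\infty$, $(p^2q^2y;p)_\infty$ together with the monomial factors $(1-pq^2y)$, $(1-p^{-1}q^{-2}y)$, $(1-q^{-2}y)$ and $(1-y)$ appearing in $\lambda_{i,i}$, until everything collapses to a $u$-independent constant (which works out to $C_1=-p^{1+\gamma+\gamma(\gamma+1)/2}$). Once the three elementary identities are in hand, multiplying over all pairs $r<s$ and collecting constants into $c_M$ completes the proof: per unordered same-color pair $\{a,b\}$ one collects $\xi(u_{i,a}-u_{i,b})\xi(u_{i,b}-u_{i,a})$, so the same-color product reassembles as $\prod_{a\neq b}\xi(u_{i,a}-u_{i,b})$; the adjacent contributions give the full $\prod_i\prod_{a,b}\eta(u_{i,a}-u_{i+1,b})$ in $k_M$. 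The $\pm\gamma/2$ asymmetry in the adjacent theta factors of $h_M$ is absorbed automatically by the distinction between the ``$-$'' variant $\lambda_{j,j-1}$ and the ``$+$'' variant $\lambda_{i,i+1}$ in the definition of $\lambda_{i,j}$.
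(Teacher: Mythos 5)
Your proposal is correct and follows essentially the same route as the paper: both reduce \eqref{IMEM} to the per-pair identities $\lambda_{i,i}(p^u)\xi(u)\xi(-u)=-p^{-\gamma^2/2-3\gamma/2-1}\theta(u)\theta(u-\gamma)$ and $\lambda_{i,i\pm1}\big(p^{\mp u}\big)\theta(u\pm\gamma/2-\beta)=\mathrm{const}\cdot\eta(u)$, handling the ordering mismatch between \eqref{IM1} and $\bE^{(M)}$ via the symmetry/exchange relations, and your constant $C_1=-p^{1+\gamma+\gamma(\gamma+1)/2}$ agrees with the paper's. (The only blemish is a typo in your list of monomial factors: the factor from $\lambda_{i,i}$ is $\big(1-q^{2}y\big)$, not $\big(1-q^{-2}y\big)$.)
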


\begin{proof}
This follows from the relations
\begin{gather*}
\frac{\lambda_{i,i}(p^{-u})}{\lambda_{i,i}(p^u)}=\frac{\theta(u+\gamma)}{\theta(u-\gamma)},\qquad
\frac{\lambda_{i,i\mp1}(p^{-u})}{\lambda_{i,i\pm1}(p^u)}=
q^{\pm 2\beta} \frac{\theta(u\pm \beta-\gamma/2)}{\theta(u\pm \beta+\gamma/2)},
\\
\lambda_{i,i}(p^u)\xi(u)\xi(-u)=-p^{-\gamma^2/2-3\gamma/2-1}\theta(u)\theta(u-\gamma),
\\
\lambda_{i,i\pm 1}\big(p^{\mp u}\big)\theta(u\pm \gamma/2-\beta)=p^{(\gamma/2\mp \beta)^2/2-(\gamma/2-\beta)/2}\, \eta(u).\tag*{\qed}
\end{gather*}
\renewcommand{\qed}{}
\end{proof}

\subsection{Commutativity}%\label{sec:comm}
We are now in a position to state the commutativity of IMs.

\begin{thm}\label{thm:main}
For all $M,N\ge1$ and $\vartheta_1,\vartheta_2\in X_n$, the integrals of motion mutually commute:
\begin{gather*}
[\bG_M(\vartheta_1),\bG_N(\vartheta_2)]=0.
\end{gather*}
\end{thm}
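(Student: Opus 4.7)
The strategy is to rewrite the commutator as a single contour integral of the fully symmetric current $\bE^{(M+N)}(\bx_1,\dots,\bx_n)$ against a kernel function which, by an elliptic theta identity, is supported on the wheel loci of Proposition \ref{prop:wheel}. The vanishing then follows from the wheel conditions \eqref{wheel1}--\eqref{wheel3}.

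\textbf{Step 1: symmetrization of the product.} Starting from the form \eqref{IMEM}, the plan is to express both $\bG_M(\vartheta_1)\bG_N(\vartheta_2)$ and $\bG_N(\vartheta_2)\bG_M(\vartheta_1)$ as $n(M+N)$-fold integrals of the single symmetric current $\bE^{(M+N)}(\bx_1,\dots,\bx_n)$. Letting the $M$-variables of $\bG_M$ live on a large circle $|x|=R_1$ and the $N$-variables of $\bG_N$ on a small circle $|x|=R_2<R_1$, the $\lambda_{i,j}$-factors built into the definition \eqref{btEiii} absorb the exchange coefficients between the two blocks, so that the ordered product in \eqref{IM1} becomes $\bE^{(M+N)}$ multiplied by a kernel $K^{(R_1,R_2)}(\bu;\vartheta_1,\vartheta_2)$ that is a product of the two $k$-kernels of \eqref{IMEM} times extra $\lambda_{i,j}$-ratios coupling the two blocks. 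Reversing the order of the two factors produces the same current times a kernel $K^{(R_2,R_1)}$ with the roles of the two blocks exchanged.

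\textbf{Step 2: reduction to a theta identity.} Deforming both expressions to a common circle $|x|=R$, the commutator takes the shape
\begin{gather*}
[\bG_M(\vartheta_1),\bG_N(\vartheta_2)]
=\oint \bE^{(M+N)}(\bx_1,\dots,\bx_n)\,
D(\bu;\vartheta_1,\vartheta_2)
\prod_{i=1}^n\prod_{a=1}^{M+N}\frac{{\rm d}x_{i,a}}{x_{i,a}},
\end{gather*}
where $D=K^{(R_1,R_2)}-K^{(R_2,R_1)}$. The content of Appendix \ref{sec:identity} is a theta function identity of elliptic Cauchy (Kajihara--Noumi) type, which allows one to decompose $D$ as an alternating sum of terms each carrying a factor $\theta(\cdots)$ that vanishes when three of the integration variables are specialized to one of the wheel configurations
\begin{gather*}
(z,q^2z,qd^{\mp1}z),\qquad
\big(z,C^2z,C^2q^{-1}d^{\mp1}z\big),\qquad
\big(z,C^{-2}z,C^{-2}qd^{\mp1}z\big).
\end{gather*}
The relation $C^2=pq^2$ is crucial here: it identifies the $C^{\pm 2}$-wheels \eqref{wheel2}--\eqref{wheel3} with elliptic shifts by $\tau$ (resp.\ $-\tau$) in the variable $u=\log_p x$, matching the quasi-periodicity of $\theta$ in \eqref{theta-per0} and hence the structure of the kernel $h_M$ of \eqref{kernel_h}.

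\textbf{Step 3: wheel cancellation and contour issues.} Since by Proposition \ref{prop:wheel} every matrix coefficient of $\bE^{(M+N)}(\bx_1,\dots,\bx_n)$ vanishes at each of the wheel loci listed above (the argument there promotes from the triple $\bE_{i,i,i\pm 1}$ to arbitrary length via the symmetry \eqref{E-symm}), each summand in $D\cdot\bE^{(M+N)}$ is identically zero, so the total integral vanishes. The contour deformation $R_1\to R$ and $R_2\to R$ is legitimate because, thanks to the parameter assumption \eqref{assume-para} together with the quasi-periodicities \eqref{theta-per1}--\eqref{theta-per2} defining $X_n$, the integrand is a single-valued meromorphic function in each $x_{i,a}$ on a suitable annulus, and the relevant poles of $k_M, k_N$ (coming from $\eta$) and of the $\lambda_{i,j}$-ratios are precisely matched to the wheel loci already controlled above.

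\textbf{Main obstacle.} The non-trivial input is Step 2, i.e.\ the explicit decomposition of the kernel difference $D$ into summands supported on wheel loci. This is a rank-$n$ elliptic rearrangement identity whose verification is the technical core of the paper and is the reason Appendix \ref{sec:identity} is devoted solely to proving the relevant theta function identity; the present theorem reduces, via the clean wheel mechanism of Proposition \ref{prop:wheel}, to that purely combinatorial statement.
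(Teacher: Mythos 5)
There is a genuine gap: the central mechanism of your Step~2--3 does not work. You claim that the kernel difference $D$ decomposes into summands each carrying a theta factor vanishing on the wheel loci, so that each summand of $D\cdot\bE^{(M+N)}$ is \emph{identically} zero. But the wheel conditions \eqref{wheel1}--\eqref{wheel3} are zero conditions on codimension-two specializations of the variables; the vanishing of $\bE^{(M+N)}$ on such a locus cannot annihilate a product with a generic meromorphic kernel, and the identity proved in Appendix~\ref{sec:identity} is not a decomposition of $D$ of this kind --- it is an equality of \emph{symmetrizations}, $\Sym T=\Sym T'$, of the two cross-kernels. Relatedly, your Step~3 asserts that the contour deformation to a common circle is unobstructed because the relevant poles are ``matched to the wheel loci''; in fact the poles at $y_{i,b}=q_2^{-1}x_{i,a}$ and $y_{i,b}=p^{-1}q_2^{-1}x_{i,a}$ (resp.\ $q_2x_{i,a}$, $pq_2x_{i,a}$ for the opposite ordering) are \emph{not} cancelled by any wheel condition and genuinely contribute residues when the contour $\cC_R$ is brought to the unit circle. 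Matching these residue contributions between the two orderings is an essential and nontrivial part of the argument that your proposal omits entirely.

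The paper's actual proof runs as follows: one product is defined with the $y$-contours at $R\gg1$, the other at $R\ll1$; deforming both to the unit circle picks up the residues just listed. The wheel conditions enter only in a supporting role --- they guarantee that taking a residue in one variable does not create new poles in the remaining variables (e.g.\ at $y_{i\pm1,c}=q_2^{-1}q_1^{-1}x_{i,a}$), so that the iterated contour shifts are legitimate. The equality of the two orderings is then established in two pieces: the terms with no residues taken reduce, after symmetrization over the common circle, to the theta identity \eqref{KSid} of Theorem~\ref{thm:KSidentity}; the residue terms are matched by taking the corresponding residues of that same identity (the $\mathrm{Sym}^*$ computation). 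To repair your proposal you would need to abandon the ``integrand vanishes identically'' mechanism, account explicitly for the residue terms at $y_{i,b}=p^{k}q_2^{\mp1}x_{i,a}$, and invoke the appendix identity both directly and at the level of its residues.
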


Theorem \ref{thm:main} is the main result of this paper.
The rest of this section is devoted to its proof.

\begin{proof}
We start by examining the product of operators, dropping irrelevant constants
\begin{gather*}
\bG_M(\vartheta_1)=
\int\!\!\cdots\!\!\int_{\cC_1}
\bE^{(M)}(\bx_1,\dots,\bx_n)\,k_M(\bu_1,\dots,\bu_n;\vartheta_1)
\prod_{i=1}^n\prod_{a=1}^M\frac{{\rm d} x_{i,a}}{x_{i,a}},
\\
\bG_N(\vartheta_2)=
\int\!\!\cdots\!\!\int_{\cC_R}
\bE^{(N)}(\by_1,\dots,\by_n)\,k_N(\bv_1,\dots,\bv_n;\vartheta_2)
\prod_{i=1}^n\prod_{b=1}^N\frac{{\rm d} y_{i,b}}{y_{i,b}}.
\end{gather*}
Here $x_{i,a}=p^{u_{i,a}}$, $y_{i,b}=p^{v_{i,b}}$.
We choose the contours to be the unit circle $\cC_1\colon |x_{i,a}|=1$ for~$\bG_M(\vartheta_1)$, and
a circle $\cC_R\colon |y_{i,b}|=R$ of radius $R$ for $\bG_N(\vartheta_2)$.

Consider first the product
\begin{align}
\bG_N(\vartheta_2)\bG_M(\vartheta_1)
=\!\!\!{}&\int\!\!\cdots\!\!\int_{\cC_R}\int\!\!\cdots\!\!\int_{\cC_1}\!\!
\bE^{(N)}(\by_1,\dots,\by_n)\,
\bE^{(M)}(\bx_1,\dots,\bx_n)
\nn
\\
&\!\!\times k_N(\bv_1,\dots,\bv_n;\vartheta_2)\,k_M(\bu_1,\dots,\bu_n;\vartheta_1)
\prod_{i=1}^n\prod_{b=1}^N\frac{{\rm d} y_{i,b}}{y_{i,b}}\prod_{i=1}^n\prod_{a=1}^M\frac{{\rm d} x_{i,a}}{x_{i,a}}.
\label{G2G1}
\end{align}
We choose $R\gg 1$ to ensure that the product of operators in the integrand converges absolutely.
Introducing the combined variables $\bz_i=(z_{i,1},\dots,z_{i,M+N})$, where
$z_{i,a}=x_{i,a}$, $1\le a\le M$, $z_{i,a+M}=y_{i,a}$, and $u_{i,M+a}=v_{i,a}$, $1\le a\le N$,
and substituting
\begin{gather*}
\bE^{(N)}(\by_1,\dots,\by_n)\bE^{(M)}(\bx_1,\dots,\bx_n)
\\ \qquad
{}=\bE^{(M+N)}(\bz_1,\dots,\bz_n)
\prod_{i=1}^n\prod_{\substack{1\le a\le M\\ 1\le b\le N}}\lambda_{i,i}(x_{i,a}/y_{i,b})^{-1}
\prod_{i=1}^{n}\prod_{\substack{1\le a\le M\\ 1\le b\le N}}\lambda_{i+1,i}(x_{i,a}/y_{i+1,b})^{-1}
\\ \qquad \hphantom{=}
{}\times\prod_{i=1}^{n}\prod_{\substack{1\le a\le M\\ 1\le b\le N}}\lambda_{i-1,i}(x_{i,a}/y_{i-1,b})^{-1},
\end{gather*}
we find that the integrand of \eqref{G2G1} can be written as a product of two factors $ST$, with
\begin{gather}
S(\bz_1,\dots,\bz_n)\nn
\\ \qquad
{}=\bE^{(M+N)}(\bz_1,\dots,\bz_n)
\prod_{i=1}^n\prod_{1\le a\neq b\le M+N}\xi(u_{i,a}-u_{i,b})\,
\prod_{i=1}^n\prod_{1\le a, b\le M+N}\eta(u_{i,a}-u_{i+1,b})^{-1},\nn%\label{G2G1-int}
\\
T(\bu_1,\dots,\bu_n|\bv_1,\dots,\bv_n)\nn
\\ \qquad
{}=\vartheta_1(\ovu_1,\dots,\ovu_n)\vartheta_2(\ovv_1,\dots,\ovv_n)\nn
\\ \qquad\hphantom{=}
{}\times\prod_{i=1}^{n}\prod_{\substack{1\le a\le M\\ 1\le b\le N}}
\frac{\theta(u_{i,a}-v_{i+1,b}-\gamma/2-\beta)\theta(v_{i,b}-u_{i+1,a}+\gamma/2-\beta)}
{\theta(u_{i,a}-v_{i,b})\theta(u_{i,a}-v_{i,b}-\gamma)}.
\label{G2G1-int2}
\end{gather}

Starting from $R\gg1$, we bring the contours $\cC_R$ for $y_{i,b}$ to the unit circle.
For that purpose we need to locate the position of poles between the two circles $\cC_R$ and $\cC_1$.
The poles between $y$'s and $x$'s come from $\xi$, $\eta$ and $\theta$:
\begin{gather*}
\prod_{i=1}^n\prod_{\substack{1\le a\le M\\ 1\le b\le N}}
(1-x_{i,a}/y_{i,b})(1-y_{i,b}/x_{i,a})
\big(\cancel{x_{i,a}/y_{i,b}},p^2q^2 x_{i,a}/y_{i,b},
\overset{\scriptstyle 1-y_{i,b}/x_{i,a}}{\cancel{y_{i,b}/x_{i,a}}},p^2q^2 y_{i,b}/x_{i,a};p\big)_\infty
\\ \qquad
{}\times
\prod_{i=1}^n\prod_{\substack{1\le a\le M\\ 1\le b\le N}}
\frac{(1-y_{i+1,b}/x_{i,a})(1-x_{i+1,a}/y_{i,b})}
{\big(qd^{-1}x_{i,a}/y_{i+1,b},\cancel{qd y_{i+1,b}/x_{i,a}},\cancel{qd^{-1}y_{i,b}/x_{i+1,a}},
qd x_{i+1,a}/y_{i,b};p\big)_\infty}
\\ \qquad
{}\times
\prod_{i=1}^n\prod_{\substack{1\le a\le M\\ 1\le b\le N}}
\frac{\big(pq^{-1}d^{-1}x_{i,a}/y_{i+1,b},
\cancel{qd y_{i+1,b}/x_{i,a}},\cancel{q d^{-1}y_{i,b}/x_{i+1,a}}, pq^{-1}dx_{i+1,a}/y_{i,b};p\big)_\infty}
{\big(\cancel{x_{i,a}/y_{i,b}},\cancel{py_{i,b}/x_{i,a}},q^{-2}x_{i,a}/y_{i,b},pq^2y_{i,b}/x_{i,a};p\big)_\infty}.
\end{gather*}
Under our assumption on parameters \eqref{assume-para}, we see that
the only poles between $\cC_R$ and $\cC_1$ are
\begin{gather}
y_{i,b}=q_2^{-1}x_{i,a}, p^{-1}q_2^{-1}x_{i,a},
\label{pick-poles}
\qquad 1\le i\le n,\quad 1\le a\le M,\quad 1\le b\le N.
\end{gather}
{\samepage Including the poles between $y$'s which come from $\eta$, poles of the integrand with respect to each~$y_{i,b}$ are as follows
(see the figure below):
\begin{gather*}
y_{i,b}=p^{-k}q_1 y_{i+1,c},\qquad p^{-k}q_3 y_{i-1,c},\qquad k\ge 0,
\\
y_{i,b}=p^{k}q_3^{-1} y_{i+1,c},\qquad p^{k}q_1^{-1} y_{i-1,c},\qquad k\ge 0,
\\
y_{i,b}=p^{k}q_2^{-1}x_{i,a},\qquad k\ge -1,
\\
y_{i,b}=p^k q_1^{-1}x_{i-1,a},\qquad p^kq_3^{-1}x_{i+1,a},\qquad k\ge 0.
\end{gather*}

}

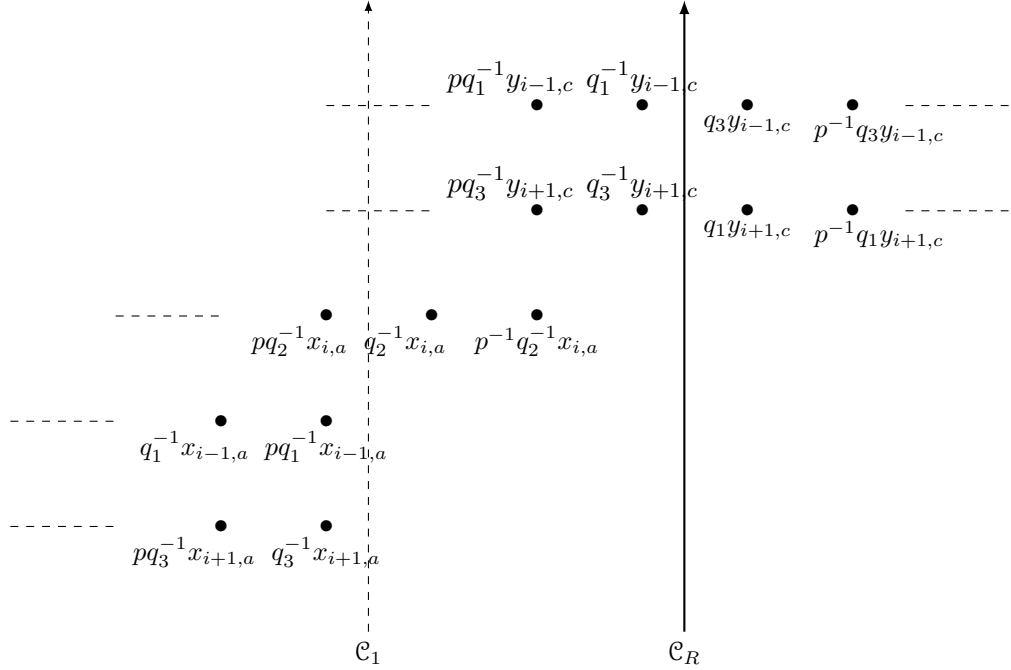
\begin{figure}[H]
%%%%%%%%%%% * * * %%%%%%%%%%%%%%%%%%%
\begin{center}
\begin{tikzpicture}[scale=0.7]

\draw[dashed] (-9,5)--(-7,5);
\node at (-5,5){$\bullet$};
\node at (-3,5){$\bullet$};
\node [above] at (-5.5,5){$pq_1^{-1}y_{i-1,c}$};
\node [above] at (-3,5){$q_1^{-1}y_{i-1,c}$};

\node at (-1,5){$\bullet$};
\node at (1,5){$\bullet$};
\draw[dashed] (2,5)--(4,5);
\node [below] at (-1,5) {\small $q_3 y_{i-1,c}$};
\node [below] at (1.5,5) {\small $p^{-1}q_3 y_{i-1,c}$};

\draw[dashed] (-9,3)--(-7,3);
\node at (-5,3){$\bullet$};
\node at (-3,3){$\bullet$};
\node [above] at (-5.5,3){$pq_3^{-1}y_{i+1,c}$};
\node [above] at (-3,3){$q_3^{-1}y_{i+1,c}$};

\node at (-1,3){$\bullet$};
\node at (1,3){$\bullet$};
\draw[dashed] (2,3)--(4,3);
\node [below] at (-1,3) {\small $q_1y_{i+1,c}$};
\node [below] at (1.5,3) {\small $p^{-1}q_1y_{i+1,c}$};

\draw[dashed] (-13,1)--(-11,1);
\node at (-9,1){$\bullet$};
\node at (-7,1){$\bullet$};
\node at (-5,1){$\bullet$};
\node [below] at (-9.5,1) {\small $pq_2^{-1}x_{i,a}$};
\node [below] at (-7.5,1) {\small $q_2^{-1}x_{i,a}$};
\node [below] at (-5,1) {\small $p^{-1}q_2^{-1}x_{i,a}$};

\draw[dashed] (-15,-1)--(-13,-1);
\node at (-11,-1){$\bullet$};
\node at (-9,-1){$\bullet$};
\node [below] at (-11.5,-1) {\small $q_1^{-1}x_{i-1,a}$};
\node [below] at (-9,-1) {\small $pq_1^{-1}x_{i-1,a}$};

\draw[dashed] (-15,-3)--(-13,-3);
\node at (-11,-3){$\bullet$};
\node at (-9,-3){$\bullet$};
\node [below] at (-11.5,-3) {\small $pq_3^{-1}x_{i+1,a}$};
\node [below] at (-9,-3) {\small $q_3^{-1}x_{i+1,a}$};

\node [below] at (-2.2,-5) {\small $\mathcal{C}_R$}; %{\small $|y_{i,b}|=R$};
\draw [-latex,thick] (-2.2,-5)--(-2.2,7);

\node [below] at (-8.2,-5) {\small $\mathcal{C}_1$}; %{\small $|y_{i,b}|=1$};
\draw [-latex,dashed] (-8.2,-5)--(-8.2,7);
\end{tikzpicture}
\caption{The contour in the $y_{i,b}$ plane. \label{Fig0}}
\end{center}
\end{figure}

We move the contour to the unit circle, picking up residues of poles \eqref{pick-poles} along the way.
At~first glance, taking residues in one variable $y_{i,b}=p^{-1}q_2^{-1}x_{i,a}$ or $y_{i,b}=q_2^{-1}x_{i,a}$ seems to produce new poles
in other variables,
\begin{gather*}
y_{i+1,c}=q_2^{-1}q_1^{-1}x_{i,a},\quad p^{-1}q_2^{-1}q_1^{-1}x_{i,a},
\\
y_{i-1,c}=q_2^{-1}q_3^{-1}x_{i,a},\quad p^{-1}q_2^{-1}q_3^{-1}x_{i,a}.
\end{gather*}
An important point is that these poles are cancelled by the zeros of $\bE^{(M+N)}(\bz_1,\dots,\bz_n)$
due to the wheel conditions \eqref{wheel1}--\eqref{wheel3}.

For the product in the opposite order
\begin{align}
\bG_M(\vartheta_1)\bG_N(\vartheta_2)
=\!\!\!{}&\int\!\!\cdots\!\!\int_{\cC_1} \int\!\!\cdots\!\!\int_{\cC_R}
\bE^{(M)}(\bx_1,\dots,\bx_n)\,
\bE^{(N)}(\by_1,\dots,\by_n)\nn
\\
&\times k_M(\bu_1,\dots,\bu_n;\vartheta_1)k_N(\bv_1,\dots,\bv_n;\vartheta_2)
\prod_{i=1}^n\prod_{a=1}^M\frac{{\rm d} x_{i,a}}{x_{i,a}}\prod_{i=1}^n\prod_{b=1}^N\frac{{\rm d} y_{i,b}}{y_{i,b}},\!
\label{G1G2}
\end{align}
we choose $R\ll 1$ for convergence.
The integrand can be obtained from the one for \eqref{G2G1}
by switching the roles of $\bx$, $M$ and $\by$, $N$.
It amounts to changing \eqref{G2G1-int2} to
{\samepage\begin{align*}
T'(\bu_1,\dots,\bu_n|\bv_1,\dots,\bv_n)={}&\vartheta_1(\ovu_1,\dots,\ovu_n)\vartheta_2(\ovv_1,\dots,\ovv_n)
%\label{G2G1-int3}
\\
& \times\prod_{i=1}^{n}\prod_{\substack{1\le a\le M\\ 1\le b\le N}}
\frac{\theta(v_{i,b}\!-\!u_{i+1,a}\!-\!\gamma/2\!-\!\beta)\theta(u_{i,a}\!-\!v_{i+1,b}\!+\!\gamma/2\!-\!\beta)}
{\theta(v_{i,b}-u_{i,a})\theta(v_{i,b}-u_{i,a}-\gamma)}.
\end{align*}}\noindent
Similarly as above, we start from $R\ll1$ and bring $\cC_R$ to the unit circle.
The relevant poles in between are
\begin{gather*}
y_{i,b}=q_2 x_{i,a},\, pq_2 x_{i,a},
\qquad 1\le i\le n,\quad 1\le a\le M, \quad 1\le b\le N.
\end{gather*}
Moving the contour we pick up residues of these poles.
Again the wheel conditions ensure that no new poles arise for the remaining variables.

In order to prove the commutativity, we must show that, after moving contours to the unit circle,
\eqref{G2G1} and \eqref{G1G2} give the same result.

First, let us compare terms that arise without taking residues.
Since all integrals are taken on the unit circle, we can symmetrize the integrand.
Taking into account the symmetry of $S(\bz_1,\dots,\bz_n)$, we see that
the equality of integrals reduces to the identity of theta functions
\begin{gather}
\Sym\,T(\bu_1,\dots,\bu_n|\bv_1,\dots,\bv_n)
=\Sym\, T'(\bu_1,\dots,\bu_n|\bv_1,\dots,\bv_n),
\label{KSid}
\end{gather}
where $\Sym$ stands for symmetrization in each group of variables
$\{u_{i,a}\}_{a=1}^M\cup\{v_{i,b}\}_{b=1}^N$, $1\le i\le n$.
This identity was stated in \cite{KS}.
Since their proof contains some gaps, we prove it in Appendix; see Theorem \ref{thm:KSidentity}
and Remark \ref{rem:gap} below.

In general, we compare terms obtained by taking residues with respect to some group of variables.
First consider terms coming from \eqref{G2G1}.
In view of symmetry and the zeros of $S(\bz_1,\dots,\bz_n)$ at $y_{i,a}=y_{i,b}$, it is enough to consider the case
\begin{gather}
y_{i,a}=p^{-1}q_2^{-1}x_{i,a},\qquad 1\le a\le k_i,\label{group1}\\
y_{i,b}=q_2^{-1}x_{i,b},\qquad k_i+1\le b\le l_i,\label{group2}
\end{gather}
with some $0\le k_i\le l_i\le \min(M, N)$, $1\le i \le n$.
We compare it with the corresponding residue coming from \eqref{G1G2}
\begin{gather}
y_{i,a}=pq_2x_{i,a},\qquad 1\le a\le k_i,\label{group3}\\
y_{i,b}=q_2x_{i,b},\qquad k_i+1\le b\le l_i,\label{group4}
\end{gather}
for $1\le i \le n$. We further rename variables $x_{i,a}$ to $p^{-1}q^{-1}_2x_{i,a}$ for \eqref{group3} and
to $q^{-1}_2x_{i,a}$ for~\eqref{group4}.
The contours for them change to $\big|p^{-1}q^{-1}_2x_{i,a}\big|=1$ and $\big|q^{-1}_2x_{i,a}\big|=1$, respectively.
We shift them back to the unit circle $|x_{i,a}|=1$, noting that the wheel conditions ensure
there are no poles which hinder the shift.

The factor $S(\bz_1,\dots,\bz_n)$ has no poles at \eqref{group1} and \eqref{group2} and specializes to
\begin{gather}
S\big(\dots, z_{i,a},\dots,z_{i,b},\dots, p^{-1}q_2^{-1}z_{i,a},\dots,q_2^{-1}z_{i,b},\dots\big).
\label{Ssp1}
\end{gather}
Similarly, at \eqref{group3} and \eqref{group4} it specializes to
\begin{gather}
S(\dots, z_{i,a},\dots,z_{i,b},\dots, pq_2z_{i,a},\dots,q_2z_{i,b},\dots).
\label{Ssp2}
\end{gather}
After the renaming, \eqref{Ssp2} is brought to \eqref{Ssp1}
because $S(\bz_1,\dots,\bz_n)$ is symmetric in \linebreak $\{z_{i,a}\}_{1\le a\le M+N}$ for each~$i$.

It remains to compare the residues of the functions $T$, $T'$.
Since they are periodic with period~$1$, their residues at $v_{i,a}=u_{i,a}-\gamma-1$ are the same as
those at $v_{i,a}=u_{i,a}-\gamma$.
Hence we are to show the equality
\begin{gather*}
\mathrm{Sym}^*
\Res_{\substack{v_{i,a}=u_{i,a}-\gamma,\\1\le a\le l_i,\, 1\le i\le n}}
T(\bu_1,\dots,\bu_n|\bv_1,\dots,\bv_n)
\\ \qquad
{}=\mathrm{Sym}^*\Big\{(-1)^{\sum_{i=1}^n l_i}
\Res_{\substack{v_{i,a}=u_{i,a}+\gamma,\\ 1\le a\le l_i,\, 1\le i\le n}} T'(\bu_1,\dots,\bu_n|\bv_1,\dots,\bv_n)\Big\}
\Big|_{\substack{u_{i,a}\to u_{i,a}-\gamma,\\ 1\le a\le l_i,\, 1\le i\le n}},
\end{gather*}
where $\mathrm{Sym}^*$ stands for the symmetrization with respect to the
remaining variables $\{u_{i,a}\}_{a=l_i+1}^M\cup\{v_{i,b}\}_{b=l_i+1}^N$, $1\le i\le n$.
Note that moving the contours $\mathcal{C}_R$ from $R<1$ to $\mathcal{C}_1$ we obtain a sign factor.

To check the equality, we start from the residues of the identity \eqref{KSid}
\begin{gather}
\Res_{\substack{v_{i,a}=u_{i,a}-\gamma,\\ 1\le a\le l_i,\, 1\le i\le n}} \!\!\!\!\! \Sym\,T(\bu_1,\dots,\bu_n|\bv_1,\dots,\bv_n)
\!=\!\!\!\!\Res_{\substack{v_{i,a}=u_{i,a}-\gamma,\\ 1\le a\le l_i,\, 1\le i\le n}}\!\!\!\!\!\Sym\, T'(\bu_1,\dots,\bu_n|\bv_1,\dots,\bv_n).\!
\label{KSid-res}
\end{gather}
Symmetrization $\Sym$ amounts to replacing variables $\bu_i$, $\bv_i$ as
\begin{gather*}
\bu_i\to \{u_{i,a}\}_{a\in I_i}\cup\{v_{i,b}\}_{b\in J'_i},\qquad
\bv_i\to \{u_{i,a}\}_{a\in I'_i}\cup\{v_{i,b}\}_{b\in J_i},
\end{gather*}
and sum over partitions of indices
\begin{gather*}
I_i\sqcup I_i'=\{1,\dots,M\},\qquad J_i\sqcup J_i'=\{1,\dots,N\},\qquad
|I_i'|=|J_i'|.
\end{gather*}
In order to have non-zero residues at $v_{i,a}=u_{i,a}-\gamma$ ($1\le a\le l_i$)
in the left-hand side of \eqref{KSid-res},
$I_i$ and $J_i$ must contain $\{1,\dots,l_i\}$.
Therefore the left-hand side of \eqref{KSid-res} reduces to
\begin{gather*}
\mathrm{Sym}^*
\Res_{\substack{v_{i,a}=u_{i,a}-\gamma,\\ 1\le a\le l_i,\, 1\le i\le n}}
T(\bu_1,\dots,\bu_n|\bv_1,\dots,\bv_n).
\end{gather*}
Similarly, in the right-hand side of \eqref{KSid-res}, non-zero residues appear only when
$J_i'$ and $I_i'$ contain $\{1,\dots,l_i\}$.
For each $i$ we have
\begin{gather*}
\Res_{\substack{v_{i,a}=u_{i,a}-\gamma,\\ 1\le a\le l_i}} T'(\dots,v_{i,1},\dots,v_{i,l_i},\dots|\dots,u_{i,1},\dots,u_{i,l_i},\dots)
\\ \qquad
{}=(-1)^{l_i}\Res_{\substack{u'_{i,a}=v'_{i,a}+\gamma,\\ 1\le a\le l_i}}
T'(\dots,v'_{i,1},\dots,v'_{i,l_i},\dots|\dots,u'_{i,1},\dots,u'_{i,l_i},\dots)
\Big|_{\substack{v'_{i,a}\to u_{i,a}-\gamma\\ 1\le a\le l_i}}.
\end{gather*}
Altogether we can rewrite the right-hand side of \eqref{KSid-res} as
\begin{gather*}
\mathrm{Sym}^*\Bigl\{(-1)^{\sum_{i=1}^n l_i}
\Res_{\substack{v_{i,a}=u_{i,a}+\gamma,\\ 1\le a\le l_i, 1\le i\le n}}
T'(\bu_1,\dots,\bu_n|\bv_1,\dots,\bv_n)\Bigr\}
\Bigl|_{\substack{u_{i,a}\to u_{i,a}-\gamma,\\ 1\le a\le l_i, 1\le i\le n}}.
\end{gather*}

Proof of Theorem \ref{thm:main} is now complete.
\end{proof}

\section[The cases n=1,2]{The cases $\boldsymbol{n=1,2}$}\label{sec:n12}

All results discussed so far are valid also in the case $n=1,2$ with suitable modifications.
In this section we indicate the necessary changes.

\subsection[Case n=2]{Case $\boldsymbol {n=2}$}%\label{sec:n=2}

Dealing with algebras $\E_n$ and $\cK_n$ for $n=2$, there are two points to be taken care of.

First, the structure functions are changed. We keep $g_{i,i}(z,w)$ unchanged
but replace $g_{i,j}(z,w)$ for $i\neq j$ with $g^{n\ge 3}_{i,j}(z,w)g^{n\ge 3}_{j,i}(z,w)$. Similar changes are due for
$G_{i,j}(x)$, $\lambda^0_{i,j}(x)$ and $\lambda_{i,j}(x)$.
Namely we use
\begin{gather*}
g_{i,1-i}(z,w)=(z-q_1w)(z-q_3w),
\\
G_{i,1-i}(x)=q^{-2}\frac{\big(1-q_1^{-1} x\big)\big(1-q_3^{-1} x\big)}{(1-q_1 x)(1-q_3 x)},
\\
\lambda^0_{i,1-i}(x)=\frac{1-q_1x}{1-x}\frac{1-q_3x}{1-x},
\\
\lambda_{i,1-i}(x)=x^\gamma \frac{1-q_1x}{1-x}\frac{1-q_3x}{1-x}\frac{\big(q_1^{-1}x,q_3^{-1}x;p\big)_\infty}{(q_1 x,q_3 x;p)_\infty}.
\end{gather*}
We set also $d_{i,i}=1$, $d_{i,1-i}=-1$.

Second, the Serre relations are modified.

The defining relations of $\E_2$ are the same as those for $n\ge 3$ except for the Serre relations
\begin{gather}
\Sym_{z_1,z_2,z_3}[e_i(z_1),[e_i(z_2),[e_i(z_3),e_{1-i}(w)]_{q^2}]]_{q^{-2}} =0,
\label{eeee}
\\
\Sym_{z_1,z_2,z_3}[f_i(z_1),[f_i(z_2),[f_i(z_3),f_{1-i}(w)]_{q^2}]]_{q^{-2}} =0.
\label{ffff}
\end{gather}
The defining relations of $\cK_2$ are \eqref{KK1}--\eqref{EE}, together with the Serre relations
\begin{gather}
\mathop{\Sym}_{z_1,z_2}\,
\bigl(q_1(z_1-q_3w)(z_2-q_3w)E_i(z_1)E_i(z_2)E_{1-i}(w)
 -\big(1+q_2^{-1}\big)(z_1-q_3w)(q_1z_2-w)\nn
\\ \qquad
{}\times E_i(z_1)E_{1-i}(w)E_i(z_2)
+q_3(q_1z_1-w)(q_1z_2-w)E_{1-i}(w)E_i(z_1)E_i(z_2)\bigr)\nn
\\ \qquad
{}=\mathop{\Sym}_{z_1,z_2}\,\biggl\{
q^{-3}w\bigg(\big(z_2-d^{-2}z_1\big)\frac{1-q_3^{-1}z_1/w}{1-q_3 z_1/w}
+\big(z_2-d^2z_1\big)\frac{1-q_1^{-1} w/z_2}{1-q_1 w/z_2}-z_2+q^2z_1\biggr)\nn
\\ \qquad\hphantom{=}
{}\times \delta\big(C^2z_1/z_2\big)K^-_i(z_1)E_{1-i}(w)K_i^+(z_2)\bigg\},\label{Serre_2}
\end{gather}
and the same relation with $q_1$ and $q_3$ interchanged.

At first glance, the quartic Serre relations \eqref{eeee} and~\eqref{ffff}
for $\E_2$ and the cubic Serre relations~\eqref{Serre_2} for $\cK_2$ look very different.
Actually the former are equivalent (under the rest of the relations)
to cubic relations which are the left-hand sides of \eqref{Serre_2} and their ``$f$-version''.
See~\cite[Lemma~2.1]{FJMM3} and remark after that.

With the above changes, formula for comodule structure (Theorem \ref{thm:comod}), formulas for
integrals of motion \eqref{IM1} and~\eqref{IMEM} and their commutativity (Theorem \ref{thm:main}) remain valid.

\subsection[Case n=1]{Case $\boldsymbol{n=1}$}%\label{sec:n=1}

Algebra $\cK_1$ has been discussed in \cite{FJMV}. For reader's reference we mention the necessary changes.

We drop suffixes $i$, $j$ from structure functions and set
\begin{gather*}
g(z,w)=\prod_{s=1}^3(z-q_sw),\\
G(x)=\prod_{s=1}^3\frac{1-q_s^{-1}x}{1-q_sx},\\
\lambda^0(x)=\frac{1-C^2x}{1-x}\frac{1-C^{-2}x}{1-x}\prod_{s=1}^3 \frac{1-q_s x}{1-x},\\
\lambda(x)=\frac{1-C^2x}{1-x}\frac{1-C^{-2}x}{1-x}\prod_{s=1}^3 \frac{1-q_s x}{1-x}
\frac{\big(q_s^{-1}x;p\big)_\infty}{(q_s x;p)_\infty}.
\end{gather*}
We change also the normalization of generators of $\E_1$ slightly, so that
\begin{gather*}
\psi^\pm(z)=\exp\bigg(\sum_{\pm r>0}\kappa_r h_{\pm r}z^{\mp r}\bigg),\\
[e(z),f(w)]=\frac{1}{\kappa_1}(\delta\bigl(C{w}/{z}\bigr)\psi^+(w)
-\delta\bigl(C{z}/{w}\bigr)\psi^-(z)),
\end{gather*}
where $\kappa_r=\prod_{s=1}^3(1-q_s^r)$.

The Serre relations for $\E_1$ are
\begin{gather*}
\Sym_{z_1,z_2,z_3}z_2z_3^{-1}[e(z_1),[e(z_2),e(z_3)]] =0,
\\
\Sym_{z_1,z_2,z_3}z_2z_3^{-1}[f(z_1),[f(z_2),f(z_3)]] =0.
\end{gather*}
The Serre relation for $\cK_1$ reads
\begin{gather*}
\mathop{\mathrm{Sym}}_{z_1,z_2,z_3}\frac{z_2}{z_3}
[E(z_1),[E(z_2),E(z_3)]]
\\ \qquad
{}=\mathop{\mathrm{Sym}}_{z_1,z_2,z_3}\bigg\{X(z_1,z_2,z_3)
\frac{1}{q-q^{-1}}\delta\big(C^2 z_1/z_3\big)K^-(z_1)E(z_2)K^+(z_3)\bigg\},
\end{gather*}
where
\begin{align*}
X(z_1,z_2,z_3)={}&
\frac{(z_1+z_2)\big(z_3^2-z_1z_2\big)}{z_1z_2z_3}G(z_2/z_3)
+\frac{(z_2+z_3)\big(z_1^2-z_2z_3\big)}{z_1z_2z_3}G(z_1/z_2)
\\
&+\frac{(z_3+z_1)\big(z_2^2-z_3z_1\big)}{z_1z_2z_3}.
\end{align*}

The space of theta functions $X_1$ is defined to be that of constant functions $\C$.
Replacing $h_M(\mathbf{u}_1,\dots,\mathbf{u}_n;\vartheta)$ by
\begin{gather*}
h_M(\mathbf{u})=\prod_{1\le a<b\le M}
\frac{\theta(u_a-u_b)\theta(u_a-u_b-\gamma)}{\theta(u_a-u_b-\beta-\gamma/2)\theta(u_a-u_b+\beta-\gamma/2)},
\end{gather*}
we define integrals of motion by \eqref{IM1}. Then the commutativity Theorem \ref{thm:main} holds true.

\section{Fusion}\label{sec:fusion}

It is known \cite{FJMM3} that the completed quantum toroidal algebra $\tilde{\E}_n$ with parameters $q_1$, $q_2$, $q_3$
contains various subalgebras isomorphic to $\tilde{\E}_k$,
with $1\le k\le n-1$ and suitable parame\-ters~$\bar q_1$, $\bar q_2$, $\bar q_3$.
In this section we discuss an analogue of this construction for $\cK_n$.
Throughout this section we take $n\ge2$, and
fix an admissible $\cK_n$ module $V$, where $C$ acts as a scalar such that $C^2\neq \pm1,q^{\pm2}$.

Consider the following set of operators $\big\{\bar{E}_i(z), \bar{K}^\pm_i(z)\big\}_{0\le i\le n-2}$ acting on $V$:
\begin{gather}
\bar{E}_0(z)=\bar{c}\, E_{n-1,0}(q_1z,z),\qquad
\label{fus1}
\bar{K}^\pm_0(z)=K^\pm_{n-1}(q_1z)K^\pm_0(z),
%\label{fus2}
\\
\bar{E}_i(z)=E_i\big(q_1^{\frac{i}{n-1}}z\big),
\qquad \bar{K}^\pm_i(z)=K^\pm_i\big(q_1^{\frac{i}{n-1}}z\big),\qquad 1\le i\le n-2.
\label{fus3}
\end{gather}
Here $\bar{c}=(qq_1)^{1/2}\big(1-q_1^{-1}\big) $ for $n\ge3$ and $\bar{c}=\big(1-q_1^{-1}\big)^2$ for $n=2$.

\begin{prop}%\label{prop:fus1}
The operators \eqref{fus1} and~\eqref{fus3} together with the same $C$
 give an action of~$\cK_{n-1}$ on $V$ with parameters
\begin{gather*}
\bar{q}_1=q_1q_1^{\frac{1}{n-1}},\qquad \bar{q}_2=q_2,\qquad \bar{q}_3=q_3q_1^{-\frac{1}{n-1}}.
\end{gather*}
\end{prop}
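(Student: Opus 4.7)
The plan is to verify directly that the operators defined in \eqref{fus1}--\eqref{fus3} satisfy the defining relations \eqref{KK1}--\eqref{Serre_3} of $\cK_{n-1}$ with parameters $\bar q_1,\bar q_2,\bar q_3$. The defining relations of $\cK_n$ are invariant under the node-by-node rescaling $E_i(z),K_i^\pm(z)\mapsto E_i(c^i z),K_i^\pm(c^i z)$ for any $c\in\C^\times$, because the only spectral parameter in the structure functions $g_{i,j}$ and $G_{i,j}$ enters through the ratio $w/z$ (for $i\neq j$) while the diagonal functions are genuinely homogeneous. Taking $c=q_1^{1/(n-1)}$, all relations among $\{\bar E_i,\bar K_i^\pm\}_{1\le i\le n-2}$ are therefore automatic, and only those involving the fused node $0$ need checking.

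For the $KK$-- and $KE$--type relations \eqref{KK1}--\eqref{KK3} with the fused generator, one computes directly using $\bar K_0^\pm(z)=K_{n-1}^\pm(q_1 z)K_0^\pm(z)$. Every exchange produces a product $G_{n-1,j}(q_1^{-1}x)\cdot G_{0,j}(x)$ (or its $\lambda^0$-analogue), and using $q_1q_3=q^{-2}$ this collapses to the single new structure function $\bar G_{0,j}(x)$ associated with $\bar q_1,\bar q_2,\bar q_3$. For the quadratic $EE$-relations, the non-adjacent cases $\bar E_0(z)\bar E_j(w)$ with $j\not\equiv 0,1,n-2$ are immediate: both $E_{n-1}$ and $E_0$ commute with $E_j$. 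For the adjacent pair $(\bar E_0,\bar E_1)$ (and symmetrically $(\bar E_{n-2},\bar E_0)$), one reorders $E_0(z)E_1(q_1^{1/(n-1)}w)$ using the original $(0,1)$ relation and checks that the resulting ratio is exactly $\bar g_{0,1}(z,w)/\bar g_{1,0}(w,z)$ with $\bar q_1=q_1q_1^{1/(n-1)}$, $\bar q_3=q_3q_1^{-1/(n-1)}$. For $\bar E_0(z)\bar E_0(w)$ one expands the four-fold product $E_{n-1}(q_1z)E_0(z)E_{n-1}(q_1w)E_0(w)$; applying \eqref{EE} to the two pairs $E_{n-1}\cdot E_{n-1}$ and $E_0\cdot E_0$ produces delta-function terms whose central factors combine into $\bar K_0^-(z)\bar K_0^+(C^2z)$ (here $\bar C=C$ since $\bar q_2=q_2$). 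By Proposition~\ref{prop:EErel}, this reduces to matching the single residue $\bar E_{0,0}(z,C^2z)$, which fixes the constant $\bar c$.

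The main obstacle is the Serre relations. By Proposition~\ref{prop:wheel} it suffices to verify the wheel conditions for the fused triples $\bar E_{0,0,1}$ and $\bar E_{0,0,n-2}$ (and their $\pm 1$ variants). Unpacking $\bar E_0$ yields a five-fold product
\[
E_{n-1}(q_1z_1)E_0(z_1)E_{n-1}(q_1z_2)E_0(z_2)E_{\bar\imath}(w),
\]
with $\bar\imath\in\{1,n-2\}$, which must vanish at the three specializations dictated by \eqref{wheel1}--\eqref{wheel3} for $\bar q_s$. One checks this by combining the existing wheel conditions of $\cK_n$ at the triples $(n-1,n-1,0)$, $(0,0,1)$, $(n-1,0,1)$ (and, for $n=3$, also $(1,1,2)$), together with the quadratic zero $\lambda^0_{n-1,0}(q_1^{-1})=0$ that defined the fusion. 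The $C^{\pm 2}$-type wheel conditions \eqref{wheel2}--\eqref{wheel3} transfer directly to the $\bar C^{\pm 2}$-type for $\cK_{n-1}$ since $\bar C=C$, while the $\bar q_2$-type condition \eqref{wheel1} for $\bar E_{0,0,1}$ emerges from \eqref{wheel1} of $\cK_n$ at triple $(0,0,1)$ combined with a regularity argument at $z=q_1w$. The bookkeeping is delicate because each fused monomial carries several potential poles that must be cancelled by these wheel zeros; this tracking is the principal technical difficulty.

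For $n=2$ the target algebra is $\cK_1$ with its cubic Serre relation recalled in Section~\ref{sec:n12}, which must be verified separately by the same wheel-condition strategy; the constant $\bar c=(1-q_1^{-1})^2$ reflects the fact that the $n=2$ fusion collapses two copies of the $\lambda^0_{1,0}$ regularization.
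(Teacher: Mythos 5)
Your plan matches the paper's proof in all essentials: the relations not involving the fused node follow from homogeneity of the structure functions under the node-dependent rescaling, the relations \eqref{KK1}--\eqref{KK3} and \eqref{EE} as rational functions reduce to products of $G$'s collapsing to the barred structure functions, the quadratic relation at node $0$ is settled by checking the residue condition \eqref{quad-res} of Proposition \ref{prop:EErel} for $\bar E_{0,0}\big(z,C^2z\big)$ (which fixes $\bar c$), and the Serre relations are verified through the wheel conditions of Proposition \ref{prop:wheel}, deduced from those of $\cK_n$ at the triples $(n-1,n-1,0)$, $(0,0,1)$, $(n-1,0,1)$. The one point worth making explicit is that in evaluating the residue of $\bar E_{0,0}$ at $z_2=C^2z_1$ you must first invoke the wheel conditions to see that the apparent double poles of the four-fold product are actually simple; the paper then extracts the residue via an explicit derivative term before substituting the expression of $E_{n-1,n-1,0,0}$ through $K_{n-1}^{\mp}$ and $E_{0,0}$.
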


\begin{proof}
Denote by $\bar{G}_{i,j}(x)$, $0\le i,j\le n-2$
the functions $G_{i,j}(x)$ for $\cK_{n-1}$ with the parameters given above.
Then we have
\begin{gather*}
\bar{G}_{i,i}(x)=G_{i,i}(x)^2G_{i+1,i}(q_1x)G_{i,i+1}\big(q_1^{-1}x\big),\qquad n\ge2,
\end{gather*}
and
\begin{gather*}
\bar{G}_{i,i\pm1}(x)=
\begin{cases}
G_{i,i\pm1}\big(q_1^{\pm\frac{1}{n-1}}x\big), & n\ge4,\\
G_{i,i\pm1}\big(q_1^{\pm\frac{1}{n-1}}x\big)G_{i\pm1,i}\big(q_1^{\mp\frac{1}{n-1}}x\big),& n=3.
\end{cases}
\end{gather*}
Using these one can verify the relations \eqref{KK1}--\eqref{EE} as rational functions.

For simplicity we assume $n\ge3$ below. The case $n=2$ can be treated with minor modifications.
Let us check the condition \eqref{quad-res} for $\bar E_0(z)$. We have
\begin{align*}
\bar E_0(z_1)\bar E_{0}(z_2)={}&E_{n-1,n-1,0,0}(q_1z_1,q_1z_2,z_1,z_2)
\bar c d \big(1-q_1^{-1}\big)^2\!
\frac{(1-z_2/z_1)^2}{(1-C^2z_2/z_1)^2}\frac{(1-z_2/z_1)^2}{(1-C^{-2}z_2/z_1)^2}
\\
&\times\frac{(1-z_2/z_1)^2}{\big(1-q^2z_2/z_1\big)^2}
\frac{\big(1-q_1^{-1}z_2/z_1\big)(1-q_1z_2/z_1)}{\big(1-q^{-2}z_2/z_1\big)(1-z_2/z_1)}.
\end{align*}
The wheel conditions \eqref{wheel1}--\eqref{wheel3}
ensure that the apparent double poles in the right-hand side are actually all simple.
To compute the residue at $z_2=C^2z_1$, it suffices to consider
\begin{gather*}
\frac{1}{1-C^{-2}z_2/z_1}E_{n-1,n-1,0,0}(q_1z_1,q_1z_2,z_1,z_2)\bigg|_{z_2=C^2z_1}
\\ \quad
{}=\!w \frac{\partial}{\partial w}E_{n-1,n-1,0,0}(q_1z_1,q_1z_2, w,z_2)
\bigg|_{\substack{w=z_1\\ z_2=C^2z_1}} \!\!\!\!\!
\!+\!E_{n-1,n-1,0,0}\big(q_1w,C^2q_1z_1,z_1,C^2z_1\big)\bigg|_{\substack{w=z_1\\ z_2=C^2z_1}}\!\!.
\end{gather*}
The second term in the right-hand side vanishes due to the wheel condition.
Substituting
\begin{align*}
E_{n-1,n-1,0,0}(q_1z_1,q_1z_2,w,z_2)
=&{}K_{n-1}^-(q_1z_1)E_{0,0}(w,q_1z_1)K_{n-1}^+\big(C^2q_1z_1\big)
\\
&\times \frac{1-w/z_1}{1\!-\!q_1^{-1}w/z_1}\frac{1-q^2C^{-2}w/z_1}{1\!-\!q_1^{-1}C^{-2}w/z_1}
\frac{-d^{-2}q^{-1}}{1-q_1^{-1}}\frac{1+C^2}{1-C^2}\frac{1-q^2C^2}{1\!-\!q_1^{-1}C^2},
\end{align*}
we arrive at
\begin{gather*}
\lambda^0_{0,0}(z_2/z_1)\bar E_0(z_1)\bar E_0(z_2)\bigl|_{z_2=C^2z_1}
=\frac{1}{q-q^{-1}}\frac{1+C^2}{1-C^2}\frac{1-q^2C^2}{1-C^2}\bar K_0^-(z_1)\bar K_0^+\big(C^2z_1\big).
\end{gather*}

{\sloppy
The wheel conditions for $\bar E_i(z)$ can be verified similarly.
As an example, consider $\bar E_{0,0,1}(z_1,z_2,w)$.
Up to an irrelevant factor it can be written as
\begin{gather}
\bar E_{0,0,1}(z_1,z_2,w) \propto
\frac{1}{\lambda^0_{0,0}(z_2/z_1)}E_{n-1,n-1,0,0,1}\big(q_1z_1,q_1z_2,z_1,z_2,q_1^{\frac{1}{n-1}}w\big).
\label{fus-wheel}
\end{gather}}
Since
\[
E_{n-1,n-1,0}(q_1z,q_1q_2z,z)=0
\qquad\text{and}\qquad \text E_{0,0,1}(z,q_2z,q_2q_3z)=0,
\]
$E_{n-1,n-1,0,0,1}\big(q_1z_1,q_1z_2,z_1,z_2,q_1^{-1}z_1\big)$ is divisible by $(z_2-q_2z_1)^2$.
Therefore \eqref{fus-wheel} vanishes at $z_2=q_2z_1$, $w=\bar q_1^{-1} z_1$.
Similarly, the condition $E_{n-1,n-1,0}(q_1z,q_1C^2z,z)=0$ and $E_{0,0,1}\big(z,C^2z,C^2q_3z\big)\allowbreak=0$ imply that
\eqref{fus-wheel} vanishes at $z_2=C^2z_1$, $w=C^2 \bar q_3 z_1$.
\end{proof}

Interchanging the roles of $q_1$ and $q_3$, one can equally well consider action of $\cK_{n-1}$ with parameters\vspace{-.5ex}
\begin{align*}
\bar{\bar{q}}_1=q_1q_3^{-\frac{1}{n-1}},\qquad \bar{\bar{q}}_2=q_2,\qquad \bar{\bar{q}}_3=q_3q_3^{\frac{1}{n-1}}.
\end{align*}
Note that in either case the values of $q$, $C$ are unchanged.

Iterating this construction we obtain in general
\begin{prop}\label{prop:fusion}
Let $1\le k\le n-1$. Then the following hold with an appropriate choice of constants
$\bar c_{n,k}$, $\bbar c_{n,n-k}$:
\begin{enumerate}\itemsep=0pt
\item[$(i)$] The operators\vspace{-.5ex}
\begin{gather*}
\bar{E}_0(z)=\bar{c}_{n,k}\, E_{k,k+1,\dots,n-1,0}\big(q_1^{n-k}z,q_1^{n-k-1}z,\dots,q_1z,z\big),
\\
\bar{K}^\pm_0(z)=K^\pm_k\big(q_1^{n-k}z\big)K^\pm_{k+1}\big(q_1^{n-k-1}z\big)\cdots K^\pm_{n-1}(q_1z)K^\pm_0(z),
\\
\bar{E}_i(z)=E_i\big(q_1^{\frac{n-k}{k}i}z\big),
\qquad \bar{K}^\pm_i(z)=K^\pm_i\big(q_1^{\frac{n-k}{k}i}z\big),\qquad 1\le i\le k-1,
\end{gather*}
together with the same $C$
give an action of $\cK_k$
on $V$ with parameters\vspace{-.5ex}
\begin{gather*}
\bar{q}_1=q_1q_1^{\frac{n-k}{k}},\qquad \bar{q}_2=q_2,\qquad \bar{q}_3=q_3q_1^{-\frac{n-k}{k}}.
\end{gather*}

\item[$(ii)$] The operators\vspace{-.5ex}
\begin{gather*}
\bbar{E}_0(z)=\bbar{c}_{n,n-k} E_{k,k-1,\dots,1,0}\big(q_3^{k}z,q_3^{k-1}z,\dots,q_3z,z\big),
\\
\bbar{K}^\pm_0(z)=K^\pm_k\big(q_3^{k}z\big)K^\pm_{k-1}\big(q_3^{k-1}z\big)\cdots K^\pm_{1}(q_3z)K^\pm_0(z),
\\
\bbar{E}_i(z)=E_{i+k}\big(q_3^{\frac{k}{n-k}i}z\big),
\qquad \bbar{K}^\pm_i(z)=K^\pm_{i+k}\big(q_3^{\frac{k}{n-k}i}z\big),\qquad 1\le i\le n-k-1,
\end{gather*}
together with the same $C$
give an action of $\cK_{n-k}$
on $V$ with parameters
\begin{gather*}
\bbar{q}_1=q_1q_3^{-\frac{k}{n-k}},\qquad \bbar{q}_2=q_2,\qquad \bbar{q}_3=q_3q_3^{\frac{k}{n-k}}.
\end{gather*}

\item[$(iii)$] The actions $(i)$ and $(ii)$ mutually commute.
\end{enumerate}
\end{prop}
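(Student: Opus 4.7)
The plan is to derive Proposition \ref{prop:fusion} by iterated application of the previous Proposition, which is itself the case $k=n-1$ of part (i), and of its $q_1 \leftrightarrow q_3$ variant, which gives the case $k=1$ of part (ii). I would prove (i) and (ii) by induction on $n-k$ and on $k$ respectively.

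For the inductive step of (i), I would assume that a $\cK_{k+1}$-action on $V$ has already been constructed whose fused node $0$ is built from the original currents $E_0,E_{n-1},\dots,E_{k+1}$ and whose parameters are specific products of powers of $q_1$, $q_2$, $q_3$. Applying the previous Proposition to this $\cK_{k+1}$ fuses its node $k$, which still carries the original $E_k$ with an accumulated shift, into its node $0$, producing the desired $\cK_k$-action. Composing the argument shifts through the iteration produces the powers $q_1^{n-k-j}$ appearing in \eqref{fus1} and the power $q_1^{(n-k)i/k}$ appearing in \eqref{fus3}, and a direct telescoping of the parameter recursion from the previous Proposition gives $\bar q_1 = q_1 q_1^{(n-k)/k}$ and $\bar q_3 = q_3 q_1^{-(n-k)/k}$. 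The constant $\bar c_{n,k}$ arises from keeping track of the scalar produced at each binary fusion. Part (ii) is obtained by the symmetric argument, iterating the $q_1 \leftrightarrow q_3$ variant.

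For part (iii), I would verify commutativity case by case. The operators $\bar E_i(z)$ for $1\le i \le k-1$ equal $E_i$ with an argument shift, and the operators $\bbar E_j(w)$ for $1\le j \le n-k-1$ equal $E_{j+k}$ with an argument shift; since $(j+k)-i\ge 2$, relation \eqref{EE2} yields commutativity of these pairs as rational functions, and the commutativity of the corresponding Cartan parts follows from \eqref{KK1}--\eqref{KK3}. The nontrivial content is the commutation of each fused current $\bar E_0$ or $\bbar E_0$ with the operators from the other action; I would expand $\bar E_0$ and $\bbar E_0$ using \eqref{fus1}--\eqref{fus3}, rearrange the underlying $E$-currents using \eqref{EE} and \eqref{ghatEE}, and invoke the wheel conditions \eqref{wheel1}--\eqref{wheel3} to show that the unwanted cross-residues vanish.

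The main obstacle will be verifying $[\bar E_0(z),\bbar E_0(w)]=0$. These two currents share the original nodes $0$ and $k$: $\bar E_0(z)$ involves $E_0(z)$ and $E_k(q_1^{n-k}z)$, while $\bbar E_0(w)$ involves $E_0(w)$ and $E_k(q_3^k w)$. Moving them past one another produces delta-function contributions from \eqref{EE} that are potentially dangerous, but the specific shifts in the fusion formulas and the wheel conditions are designed so that these contributions cancel; checking this in full generality is the most computationally involved step, although it is structurally analogous to the verification of \eqref{quad-res} and of the wheel conditions for $\bar E_0$ already carried out in the proof of the previous Proposition.
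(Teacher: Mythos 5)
Your proposal follows essentially the same route as the paper: parts (i) and (ii) are obtained by iterating the one-step fusion proposition (together with its $q_1\leftrightarrow q_3$ variant), and part (iii) is verified case by case, with the wheel conditions \eqref{wheel1}--\eqref{wheel3} cancelling the dangerous poles arising between the two fused currents. The only ingredient you leave implicit is the telescoping identity $G_{i,i-1}(q_1x)G_{i,i}(x)G_{i,i+1}\big(q_1^{-1}x\big)=1$ (and its analogue $G_{i-1,i}(q_3x)G_{i,i}(x)G_{i,i+1}\big(q_1^{-1}x\big)=1$), which is what makes the Cartan half-currents of one action commute with the fused current of the other.
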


\begin{proof}
We need only to prove the commutativity $(iii).$ For simplicity of presentation we assume $n\ge3$.
Evidently $\big\{\bar E_i(z),\bar K^\pm_i(z)\big\}_{1\le i\le k-1}$ commute with
$\big\{\bbar E_i(z),\bbar K^\pm_i(z)\big\}_{1\le i\le n-k-1}$.

To show that $\bbar K^+_i(z)$, $1\le i\le n-k-1$, commutes with $\bar E_0(w)$, it suffices to note that
\begin{align*}
K^+_j(z)E_{k,k+1,\dots,n-1,0}\big(q_1^{n-k}w,\dots,w\big)={}&E_{k,k+1,\dots,n-1,0}\big(q_1^{n-k}w,\dots,w\big)K^+_j(z)
\\
& \times G_{j,j-1}\big(q_1^{n-j+1}w/z\big)G_{j,j}\big(q_1^{n-j}w/z\big)
\\
& \times G_{j,j+1}\big(q_1^{n-j-1}w/z\big),\qquad k+1\le j\le n-1,
\end{align*}
and apply the identity\vspace{-.5ex}
\begin{gather*}
G_{i,i-1}(q_1 x)G_{i,i}(x)G_{i,i+1}\big(q_1^{-1}x\big)=1.
\end{gather*}
Commutativity of $\big\{\bbar E_i(z),\bbar K^\pm_i(z)\big\}_{1\le i\le n-k-1}$ with $\big\{\bar E_0(w),\bar K^\pm_0(w)\big\}$
(for $\bbar E_i(z)$ with $\bar E_0(w)$ as rational functions)
can be shown similarly.

We have further
\begin{align*}
\bbar K_0(z) \bar E_0(w) ={}&\bar E_0(w) \bbar K_0(z)
G_{k-1,k}\big(q_1^nq_2^kq_3 w/z\big)G_{k,k}\big(q_1^nq_2^k w/z\big)G_{k,k+1}\big(q_1^nq_2^kq_1^{-1} w/z\big)
\\
&\times G_{1,0}\big(q_3^{-1}w/z\big)G_{0,0}(w/z)G_{0,n-1}(q_1 w/z),
\end{align*}
so that we can apply
\begin{gather*}
G_{i-1,i}(q_3x)G_{i,i}(x)G_{i,i+1}\big(q_1^{-1}x\big)=1.
\end{gather*}
By the same token $\big\{\bbar E_0(z),\bbar K^\pm_0(z)\big\}$ commute with $\big\{\bar E_0(w),\bar K^\pm_0(w)\big\}$
(for $\bbar E_0(z)$ with $\bar E_0(w)$ as rational functions).

To finish the proof, it remains to examine the singularities of $\bbar E_i(z)\bar E_0(w)$.
For $1\le i\le n-k-1$, we use
\begin{align*}
E_j(z)E_{j-1,j,j+1}\big(q_1w,w,q_1^{-1}w\big)={}&E_{j-1,j,j,j+1}\big(q_1w,z,w,q_1^{-1}w\big)
\frac{1-q_1w/z}{1-q_2^{-1}w/z}\frac{1-w/z}{1-C^2w/z}
\\
&\times\frac{1-w/z}{1-q^2w/z}\frac{1-w/z}{1-C^{-2}w/z}\frac{1-q_1^{-1}w/z}{1-w/z},
\end{align*}
and observe that all poles are cancelled due to the wheel conditions. Finally for $i=0$ we have
\begin{gather*}
E_{k,k-1,\dots,1,0}\big(q_3^kz,\dots,z\big)E_{k,k+1,\dots,n-1,0}\big(q_1^{n-k}w,\dots,w\big)
\\ \qquad
{}=E_{0,0,1,\dots,k-1,k,k,k+1,\dots,n-1}\big(z,w,q_3z,\dots,q_3^{k-1}z,q_3z,q_1^{n-k}w,\dots,q_1w\big)
\\ \qquad\hphantom{=}
{}\times \bigl(\lambda^0_{k-1,k}\big(q_1^nq_2^k q_3w/z\big)\lambda^0_{k,k}\big(q_1^nq_2^kw/z\big)\lambda^0_{k,k+1}\big(q_1^nq_2^k q_1^{-1}w/z\big)\bigr)^{-1}
\\ \qquad\hphantom{=}
{}\times \bigl(\lambda^0_{0,n-1}(q_1 w/z)\lambda^0_{0,0}(w/z)\lambda^0_{1,0}\big(q_3^{-1}w/z\big)\bigr)^{-1},
\end{gather*}
so the cancellation works similarly as above.
\end{proof}

\appendix

\section{Theta function identities}\label{sec:identity}

As in Section \ref{subsec:IM} we use the space of theta functions $X_n(\mu_1,\dots,\mu_n)$
given by \eqref{theta-per1} and \eqref{theta-per2} for $n\ge2$.
For $n=1$, we define ${X}_1=\C$ to be the space of constant functions.
In this section we prove the following identity.

\begin{thm}\label{thm:KSidentity}
The following identities hold for all $n, M,N\ge1$, $\alpha,\gamma\in\C$ and $\vartheta_1,\vartheta_2\in {X}_n$:
\begin{gather}
\sum_{I_1,J_1}\cdots \sum_{I_n,J_n}
\vartheta_1(\ovu_{1,I_1},\dots,\ovu_{n,I_n})\vartheta_2(\ovu_{1,J_1},\dots,\ovu_{n,J_n})\nn
\\ \qquad
{}\times\prod_{i=1}^{n}
\frac{\prod_{\substack{a\in I_i\\ b\in J_{i+1}}}\theta(u_{i,a}-u_{i+1,b}-\alpha)
\prod_{\substack{b\in J_i\\ a\in I_{i+1}}}\theta(u_{i,b}-u_{i+1,a}-\alpha+\gamma)}
{\prod_{\substack{a\in I_i\\ b\in J_i}}\theta(u_{i,a}-u_{i,b})\theta(u_{i,a}-u_{i,b}-\gamma)}\nn
\\ \qquad
{}=\sum_{I_1,J_1}\cdots \sum_{I_n,J_n}
\vartheta_1(\ovu_{1,I_1},\dots,\ovu_{n,I_n})\vartheta_2(\ovu_{1,J_1},\dots,\ovu_{n,J_n})\nn
\\ \qquad\hphantom{=}
{}\times \prod_{i=1}^{n}
\frac{\prod_{\substack{b\in J_i\\ a\in I_{i+1}}}\theta(u_{i,b}-u_{i+1,a}-\alpha)
\prod_{\substack{a\in I_i\\ b\in J_{i+1}}}\theta(u_{i,a}-u_{i+1,b}-\alpha+\gamma)}
{\prod_{\substack{a\in I_i\\ b\in J_i}}\theta(u_{i,b}-u_{i,a})\theta(u_{i,b}-u_{i,a}-\gamma)}.
\label{main-idenity}
\end{gather}
Here we set $\ovu_{i,I}=\sum_{a\in I}u_{i,a}$ for a subset $I$ of $\{1,\dots,M+N\}$,
and the sum is taken over all partitions
\begin{gather*}
I_i\sqcup J_i=\{1,\dots,M+N\}\qquad \text{satisfying}\quad |I_i|=M, \quad |J_i|=N, \quad i=1,\dots,n.
\end{gather*}
\end{thm}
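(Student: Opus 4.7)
The plan is to prove \eqref{main-idenity} by induction on $M+N$, reducing each inductive step to an analysis of each side as a quasi-elliptic function of a single variable. The base cases $M=0$ or $N=0$ are immediate: one of the partitions collapses to the unique choice and both sides reduce to $\vartheta_1(\bar u_1,\dots,\bar u_n)\vartheta_2(0,\dots,0)$ (or the reverse), with all explicit theta-product factors being empty.

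For the inductive step I would fix all $u_{i,a}$ except $z:=u_{1,1}$, and let $F_L(z),F_R(z)$ denote the two sides. Since each side is symmetric in $\{u_{i,1},\dots,u_{i,M+N}\}$ for every $i$ (the sum over partitions is), treating one variable suffices. Using \eqref{theta-per0} and \eqref{theta-per1}--\eqref{theta-per2}, I would first check that $F_L$ and $F_R$ have identical quasi-periodicity $F(z+1)=F(z)$ and $F(z+\tau)=e^{\phi(z)}F(z)$. The multiplier $\phi(z)$ is determined by (a) the quasi-periodic contribution of $\vartheta_1$ or $\vartheta_2$ through the argument $\bar u_{1,I_1}$ or $\bar u_{1,J_1}$ carrying $u_{1,1}$, and (b) the multipliers of each theta factor in the numerator and denominator containing $u_{1,1}$. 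A careful count, using that the $\mu_i$ constraint $\sum\mu_i=0$ ties the two cases together, shows the $\phi(z)$'s on both sides coincide.

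Next I identify the poles of $F_L(z),F_R(z)$: they are at most simple and occur at $z\equiv u_{1,b}$ or $z\equiv u_{1,b}+\gamma$ modulo the period lattice, $b=2,\dots,M+N$. For the residue at $z=u_{1,2}+\gamma$, only partitions with $1\in I_1,\ 2\in J_1$ contribute on the LHS (since the relevant denominator factor $\theta(u_{1,1}-u_{1,2}-\gamma)$ requires these placements), and analogously on the RHS. Upon extracting the residue and relabeling the remaining indices, both residues become instances of \eqref{main-idenity} with the first-layer partition sizes reduced to $(M-1,N-1)$, with shifted theta functions such as $\vartheta'_1(w_1,\dots,w_n)=\vartheta_1(w_1+u_{1,2}+\gamma,w_2,\dots,w_n)$ — still in $X_n$ (with $\mu_1$ adjusted) by \eqref{theta-per1}--\eqref{theta-per2}. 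By the inductive hypothesis the residues agree. Residues at $z\equiv u_{1,b}$ (without the $\gamma$ shift) are handled in the same fashion; residues at candidate poles coming from the other groups of variables are reduced to the $u_{1,1}$ case by the symmetry observed above. With matching residues and matching quasi-periodicity, $F_L-F_R$ is a holomorphic quasi-periodic function of $z$; the space of such functions is finite-dimensional, and the conclusion $F_L\equiv F_R$ then follows from a Liouville-type argument (checking vanishing at one convenient specialization, e.g.\ collapsing a variable to make one of the $\vartheta$'s factor).

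The main obstacle will be the residue bookkeeping in the inductive step. When setting $u_{1,1}=u_{1,2}+\gamma$, the numerator factors $\theta(u_{1,1}-u_{2,b}-\alpha)$ and $\theta(u_{n,a}-u_{1,1}-\alpha+\gamma)$ must be reorganized and combined with the factors indexed by $2$ (now forced into $J_1$) to reproduce exactly the kernel of the smaller identity; signs coming from $\theta(-x)=-\theta(x)$, the relabeling of partitions, and the asymmetric way the shift $\gamma$ migrates between $\alpha$ and $-\alpha+\gamma$ in the two sides must all be reconciled. A secondary difficulty is the verification that $\phi(z)$ agrees on both sides in Step~2: the equality is not obvious at the level of individual partitions and relies on a cancellation after summation, which must be exhibited by grouping terms by the placement of $1$ and recording the $\mu$-twists from \eqref{theta-per2}.
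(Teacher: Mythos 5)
Your overall strategy (matching quasi-periodicity, matching residues, then a Liouville argument) is the right family of ideas, and for $n=1$ it is essentially Ruijsenaars' argument, which the paper reproduces as Lemma \ref{lem:n=1}. But for $n\ge2$ the central step of your induction on $M+N$ fails: the residue of either side at $u_{1,1}=u_{1,2}+\gamma$ is \emph{not} an instance of \eqref{main-idenity} with parameters $(M-1,N-1)$. The identity requires $|I_i|=M$, $|J_i|=N$ for every $i$ simultaneously; taking a residue only in the first group of variables forces $1\in I_1$, $2\in J_1$ (resp.\ $2\in I_1$, $1\in J_1$ on the other side) and shrinks that one partition, while the partitions of groups $2,\dots,n$ remain of full size $M+N$ and the cross-couplings $\theta(u_{1,a}-u_{2,b}-\alpha)$, $\theta(u_{n,a}-u_{1,b}-\alpha+\gamma)$ do not collapse. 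So the inductive hypothesis cannot be invoked on the residue. This is exactly why the paper's Lemma \ref{lem:it-res} takes the \emph{simultaneous} iterated residue $u_{i,1}=u_{i,2}+\gamma$ over all $i=1,\dots,n$ --- only then does one land on $\Phi_{n,M-1,N-1}$ --- and why the actual proof runs induction on $n$ rather than on $M+N$: to show that each partial iterated residue $Z_{i+1}$ vanishes, the authors need the extra structural input of Lemma \ref{lem:sum}, namely that a nonzero $Z_{i+1}$ would have to be a function of $\bar u_i=\sum_a u_{i,a}$ alone while simultaneously carrying the factor $\prod_{a}\theta(u_{i,a}-u_{i+1,2}-\alpha)$, a contradiction. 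Nothing in your outline supplies a substitute for this step.

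A second, smaller gap is the endgame. In the variable $z=u_{1,1}$ the common multiplier under $z\mapsto z+\tau$ is ${\rm e}^{-2\pi {\rm i}(2z+\mathrm{const})}$, so the holomorphic functions with these quasi-periods form a two-dimensional space of degree-two theta functions; vanishing at ``one convenient specialization'' does not force $F_L-F_R\equiv0$. The paper instead shows that $\Phi_{n,M,N}$ has no poles at all, concludes via Lemma \ref{lem:sum} that it depends only on $\bar u_n$, and then exhibits \emph{four} distinct zeros in $\bar u_n$ (obtained from the rank-reduction Lemma \ref{lem:special}, which again requires the induction on $n$) against a quasi-periodicity that permits only two. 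You would need an analogous count of zeros, and the specializations producing them are not visible in your plan. (Your worry about poles at $z\equiv u_{1,b}$ without the $\gamma$-shift is harmless: those candidate simple poles cancel by the symmetry in $u_{1,1}\leftrightarrow u_{1,b}$, as the paper notes, so there is nothing to match there.)
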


For the proof of theorem, we prepare several lemmas.

\begin{lem}\label{Theta Lem1}
If $\vartheta\in {X}_n(\mu_1,\dots,\mu_n)$, then
\begin{gather*}
\vartheta(u_1+u,\dots,u_n+u)=\vartheta(u_1,\dots,u_n),\qquad u\in\C.
\end{gather*}
\end{lem}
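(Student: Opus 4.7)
The plan is to fix $(u_1,\dots,u_n)$ and regard
\[
f(u)=\vartheta(u_1+u,\dots,u_n+u)
\]
as an entire function of the single complex variable $u$, then show it is doubly periodic with periods $1$ and $\tau$. Since $f$ is entire, Liouville's theorem on the torus will then force $f$ to be constant, and evaluating at $u=0$ yields $f(u)=\vartheta(u_1,\dots,u_n)$, as required. The case $n=1$ is immediate since by definition $X_1=\C$ consists of constant functions, so I would assume $n\ge 2$.

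The $1$-periodicity $f(u+1)=f(u)$ reads off at once from \eqref{theta-per1} applied in each of the $n$ arguments. For $f(u+\tau)=f(u)$, setting $v_i=u_i+u$, I would compute $\vartheta(v_1+\tau,\dots,v_n+\tau)$ by shifting the arguments by $\tau$ one at a time in the order $i=1,\dots,n$, using \eqref{theta-per2} at each stage. At step $i$ the prefactor produced is
\[
\exp\bigl({-}2\pi\mathrm{i}\bigl(2v_i-u^{(i)}_{i-1}-u^{(i)}_{i+1}-\mu_i+\tau\bigr)\bigr),
\]
where $u^{(i)}_j$ is the current value of the $j$-th argument just before step $i$: equal to $v_j+\tau$ for $j<i$ and to $v_j$ for $j>i$, with indices read modulo $n$ (so in particular the wrap-around $u^{(n)}_{n+1}=u^{(n)}_1=v_1+\tau$ feeds back the step-$1$ shift into step $n$).

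The substantive check is that summing these exponents over $i=1,\dots,n$ everything cancels. The $v_j$ contributions die by cyclic bookkeeping, each $v_j$ appearing with net coefficient $2-1-1=0$. The extra $\tau$'s produced whenever $u^{(i)}_{i-1}$ is the already-shifted $v_{i-1}+\tau$ (at steps $2,\dots,n$), together with the wrap-around $-\tau$ at step $n$, exactly cancel the two explicit $+\tau$ contributions coming from steps $i=1$ and $i=n$. What remains is $-\sum_{i=1}^n\mu_i$, which vanishes by the assumption \eqref{mui}. Therefore the total multiplier is $1$, i.e.\ $f(u+\tau)=f(u)$.

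The main obstacle is the careful bookkeeping in the $\tau$-shift computation, in particular tracking which arguments carry the extra $+\tau$ at each stage and handling the cyclic wrap-around between indices $n$ and $1$ correctly. The identity hinges crucially on $\sum_i\mu_i=0$; without that constraint $f$ would be only quasi-periodic in $\tau$ and the lemma would fail.
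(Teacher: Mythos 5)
Your proof is correct and essentially identical to the paper's: the paper likewise sets $\phi(u)=\vartheta(u_1+u,\dots,u_n+u)$, observes that it is entire with periods $1$ and $\tau$, and concludes it is the constant $\phi(0)$; you have merely written out the telescoping verification of $\tau$-periodicity that the paper leaves implicit. (One small prose slip in the bookkeeping: every step $i=1,\dots,n$ contributes an explicit $+\tau$, and these $n$ terms are cancelled by the $n-1$ already-shifted left neighbours at steps $2,\dots,n$ together with the wrap-around at step $n$ --- not just ``two'' explicit $+\tau$'s --- but the net cancellation you assert, and the essential role of $\sum_i\mu_i=0$, are both correct.)
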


\begin{proof}
Let $\phi(u)=\vartheta(u_1+u,\dots,u_n+u)$.
Then $\phi(u)$ is entire and satisfies $\phi(u+1)=\phi(u)$, $\phi(u+\tau)=\phi(u)$. Hence it is a constant:
$\phi(u)=\phi(0)$.
\end{proof}

\begin{lem}\label{lem:sum}
Let $\phi(v_1,\dots,v_L)$ be an entire function with quasi-periodicity property
\begin{gather*}
\phi(v_1,\dots,v_i+1,\dots,v_L)=\phi(v_1,\dots,v_i,\dots,v_L),
\\
\phi(v_1,\dots,v_i+\tau,\dots,v_L)={\rm e}^{-2\pi {\rm i}m \sum_{i=1}^L v_i +k}\phi(v_1,\dots,v_i,\dots,v_L)
\end{gather*}
with some $m\in\Z_{\ge0}$ and $k\in\C$. Then it is a function of $\sum_{i=1}^Lv_i$.
\end{lem}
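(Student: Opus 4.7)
The plan is to reduce the claim to Liouville's theorem in $L-1$ variables via a change of coordinates that separates out the total sum $s=\sum_{j=1}^L v_j$. Concretely, I would introduce
\begin{gather*}
\psi(v_1,\dots,v_{L-1},s)=\phi\bigl(v_1,\dots,v_{L-1},\,s-v_1-\cdots-v_{L-1}\bigr),
\end{gather*}
an entire function on $\C^L$. The assertion of the lemma is equivalent to the statement that $\psi$ is independent of $v_1,\dots,v_{L-1}$, and this will follow once we check that $\psi$ is doubly periodic (with periods $1$ and $\tau$) in each of those $L-1$ slots.

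Periodicity of $\psi$ in $v_i$ ($i<L$) by $1$ and by $\tau$ translates, in the original coordinates, to invariance of $\phi$ under the combined shifts $(v_i,v_L)\mapsto(v_i+1,v_L-1)$ and $(v_i,v_L)\mapsto(v_i+\tau,v_L-\tau)$, respectively; periodicity in $s$ by $1$ corresponds to $v_L\mapsto v_L+1$. The first is immediate from the $1$-periodicity of $\phi$ in each variable. For the $\tau$-shift, I would apply the quasi-periodicity of $\phi$ once to $v_i\mapsto v_i+\tau$, picking up the factor ${\rm e}^{-2\pi{\rm i}m\sum_jv_j+k}$, and once in inverse form to undo $v_L\mapsto v_L+\tau$ starting from the intermediate state; because the sum of the coordinates is invariant under the simultaneous shift $v_i\to v_i+\tau,\ v_L\to v_L-\tau$, the second multiplier has the same form with the same $S=\sum_jv_j$. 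The two exponential factors cancel, yielding
\begin{gather*}
\phi(v_1,\dots,v_i+\tau,\dots,v_L-\tau)=\phi(v_1,\dots,v_L).
\end{gather*}

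Since $\psi$ is entire and doubly periodic in each of $v_1,\dots,v_{L-1}$, Liouville's theorem (applied one slot at a time, with the other variables held fixed) forces $\psi(v_1,\dots,v_{L-1},s)=\psi(s)$, so $\phi(v_1,\dots,v_L)=\psi(\sum_jv_j)$ is indeed a function of the sum alone. There is no real obstacle here; the only delicate point is the bookkeeping of quasi-periodic prefactors, where one must verify that the argument $\sum_jv_j$ of the multiplier is genuinely the same in the two steps—and this works precisely because the combined shift $(v_i,v_L)\mapsto(v_i+\tau,v_L-\tau)$ preserves the total sum. It is worth noting that the hypothesis $m\in\Z_{\ge0}$ is never invoked in this argument; only the \emph{linear} dependence of the multiplier on $\sum_jv_j$ matters.
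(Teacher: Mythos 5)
Your proof is correct and is essentially the paper's own argument: both reduce to a one-variable Liouville/ellipticity statement for shifts $(v_i,v_j)\mapsto(v_i+u,v_j-u)$ that preserve $\sum_j v_j$, with the quasi-periodicity multipliers cancelling precisely because the total sum is unchanged (the paper phrases this as $\varphi(u)=\phi(\dots,v_i+u,\dots,v_j-u,\dots)$ being entire and doubly periodic, hence constant). Your closing observation that only the dependence of the multiplier on $\sum_j v_j$ matters, not $m\in\Z_{\ge0}$, is also consistent with the paper's proof.
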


\begin{proof}
Fix $i\neq j$, and set $\varphi(u)=\phi(v_1,\dots,v_i+u,\dots,v_j-u,\dots,v_L)$. Then $\varphi(u)$ is an entire function
and satisfies $\varphi(u+1)=\varphi(u)$, $\varphi(u+\tau)=\varphi(u)$. Hence it is a constant: $\varphi(u)=\varphi(0)$.
Since $i$, $j$, $u$ are arbitrary, this proves the lemma.
\end{proof}

Denote both sides of the identity \eqref{main-idenity} by ${\rm LHS}_{n,M,N}$ and ${\rm RHS}_{n,M,N}$, respectively, and set $\Phi_{n,M,N}={\rm LHS}_{n,M,N}-{\rm RHS}_{n,M,N}$.
Where necessary we exhibit also their dependence on the variables
$\bu_i=(u_{i,1},\dots,u_{i,M+N})$ and
the choice of $\vartheta_1,\vartheta_2\in {X}_n$. We set $\Phi_{n,M,N}=0$ if $M=0$ or~$N=0$.

\begin{lem}%\label{FMN-quasi}
For $n\ge 2$, $\Phi_{n,M,N}$ has the quasi-periodicity in each variable $u_{i,a}$,
\begin{gather*}
\Phi_{n,M,N}(\dots, u_{i,a}+1,\dots)=\Phi_{n,M,N}(\dots, u_{i,a},\dots),\\
\Phi_{n,M,N}(\dots, u_{i,a}+\tau,\dots)={\rm e}^{-2\pi{\rm i}(2\bar u_i-\bar u_{i-1}-\bar u_{i+1}-\mu_i+\tau)}
\Phi_{n,M,N}(\dots, u_{i,a},\dots).
\end{gather*}
For $n=1$, $\Phi_{1,M,N}$ has periods $1$ and $\tau$.

Modulo $\Z\oplus\Z\tau$, the only poles of $\Phi_{n,M,N}$ are $u_{i,a}=u_{i,b}+\gamma$ with some $a$, $b$, $i$.
\end{lem}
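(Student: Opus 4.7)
The plan is to prove both claims by analyzing each summand of the sums defining $\mathrm{LHS}_{n,M,N}$ and $\mathrm{RHS}_{n,M,N}$ separately. Each summand has the same overall shape, so the quasi-periodicity holds with the same multiplier for every summand, while pole cancellation at $u_{i,a}=u_{i,b}$ is obtained by pairing partitions that differ by exchanging $a$ and $b$ between $I_i$ and $J_i$.

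For the quasi-periodicity, fix $i$ and $u_{i,a}$ in a given summand, and treat the cases $a\in I_i$ and $a\in J_i$ separately. The factors depending on $u_{i,a}$ are $2N$ in the numerator (for $a\in I_i$: $N$ of type $\theta(u_{i,a}-u_{i+1,b}-\alpha)$ from the $i$-th index group and $N$ of type $\theta(u_{i-1,b}-u_{i,a}-\alpha+\gamma)$ from the $(i-1)$-th group) and $2N$ in the denominator, with $M$ replacing $N$ if $a\in J_i$. Under $u_{i,a}\to u_{i,a}+1$ the signs from \eqref{theta-per0} cancel and $\vartheta_1,\vartheta_2$ are invariant by \eqref{theta-per1}, giving period $1$. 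Under $u_{i,a}\to u_{i,a}+\tau$, I would use $\theta(u+\tau)=-e^{-2\pi{\rm i}u-\pi{\rm i}\tau}\theta(u)$ to collect exponential factors. The $u_{i,a}$-linear exponents cancel between numerator and denominator (both give coefficient $-4\pi{\rm i}N$), as do the $\alpha$- and $\gamma$-linear terms and the constant $\tau$-terms. The surviving exponential $e^{2\pi{\rm i}(\bar u_{i+1,J_{i+1}}+\bar u_{i-1,J_{i-1}}-2\bar u_{i,J_i})}$, together with the quasi-periodicity factor of $\vartheta_1$ from \eqref{theta-per2} (its $i$-th argument $\bar u_{i,I_i}$ shifts by $\tau$), yields precisely the claimed multiplier after using $\bar u_i=\bar u_{i,I_i}+\bar u_{i,J_i}$. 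The case $a\in J_i$ is parallel with the roles of $\vartheta_1$ and $\vartheta_2$ swapped. For $n=1$ one has $X_1=\C$ and $\bar u_{i\pm1}=\bar u_i$, so the remaining exponential collapses to $1$ and the function is doubly periodic.

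For the pole analysis, fix $a\neq b$ and all $I_j,J_j$ with $j\neq i$. A possible pole at $u_{i,a}=u_{i,b}$ in a summand arises only from the denominator factor $\theta(u_{i,a}-u_{i,b})$, which is present exactly when one of $a,b$ lies in $I_i$ and the other in $J_i$. Pair $P$ with $a\in I_i, b\in J_i$ with the partition $P^\ast$ obtained from $P$ by swapping $a$ and $b$. On the diagonal $u_{i,a}=u_{i,b}$, the multisets $\{u_{i,c}:c\in I_i\}$ and $\{u_{i,c}:c\in J_i\}$ are the same for $P$ and $P^\ast$, so $\vartheta_1$, $\vartheta_2$, every numerator factor, and every denominator factor other than the singular one, all agree. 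Only the singular factor changes, from $\theta(u_{i,a}-u_{i,b})$ in $P$ to $\theta(u_{i,b}-u_{i,a})=-\theta(u_{i,a}-u_{i,b})$ in $P^\ast$ by oddness of $\theta$, so the residues cancel. The same involution works for $\mathrm{RHS}_{n,M,N}$, hence for $\Phi_{n,M,N}$. The remaining singularities come from $\theta(u_{i,a}-u_{i,b}-\gamma)$ on the LHS and $\theta(u_{i,b}-u_{i,a}-\gamma)$ on the RHS, both located at $u_{i,a}\equiv u_{i,b}+\gamma$ modulo $\Z\oplus\Z\tau$.

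The main obstacle is the accounting in the $\tau$-shift calculation: four separate exponential contributions (two numerator products of $N$ factors each, one denominator product of $2N$ factors, and the $\vartheta_1$ transformation) must conspire to produce exactly the claimed multiplier, and one must check that it is the same for every partition so that it factors out of the whole sum. Once that is done, the pole cancellation is a clean swap-involution.
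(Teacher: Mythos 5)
Your proposal is correct and follows essentially the same route as the paper's (very terse) proof: the quasi-periodicity is the direct bookkeeping of the multipliers from the transformation laws of $\theta$ and of $\vartheta_1,\vartheta_2\in X_n$, and your swap involution exchanging $a$ and $b$ between $I_i$ and $J_i$ is exactly the explicit mechanism behind the paper's one-line observation that $\Phi_{n,M,N}$, being symmetric in each group $\{u_{i,a}\}_a$, cannot have a simple pole at $u_{i,a}=u_{i,b}$. Your exponent accounting does close correctly (the $u_{i,a}$-, $\alpha$-, $\gamma$-linear terms cancel and the residual factor is partition-independent), so the argument is complete.
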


\begin{proof}
The quasi-periodicity follows from \eqref{theta-per0}, \eqref{theta-per1} and \eqref{theta-per2}.
Since $\Phi_{n,M,N}$ is symmetric in each group of variables $\{u_{i,a}\}_{a=1,\dots,M+N}$, it cannot have a simple pole at
$u_{i,a}=u_{i,b}$.
\end{proof}

\begin{lem}\label{lem:it-res}
We have
\begin{gather*}
\Res_{u_{1,1}=u_{1,2}+\gamma}\cdots \Res_{u_{n,1}=u_{n,2}+\gamma}\,
\Phi_{n,M,N}(\bu_1,\dots,\bu_n|\vartheta_1,\vartheta_2)
\\ \qquad
{}=A^n
\frac{\prod_{i=1}^n\prod_{a=2}^{M+N}\theta(u_{i,a}-u_{i+1,2}-\alpha)\theta(u_{i,2}-u_{i+1,a}-\alpha+\gamma)}
{\prod_{i=1}^n\prod_{a=3}^{M+N}\theta(u_{i,a}-u_{i,2})\theta(u_{i,a}-u_{i,2}-\gamma)}
\\ \qquad\hphantom{=}
{}\times \Phi_{n,M-1,N-1}(\bu'_1,\dots,\bu'_n|\vartheta'_1,\vartheta'_2),
\end{gather*}
where
\begin{gather*}
\bu'_i=(u_{i,3},\dots,u_{i,M+N}),
\\
\vartheta'_s(v_1,\dots,v_n)=\vartheta_s(v_1+u_{1,2},\dots,v_n+u_{n,2}),\qquad s=1,2,
\end{gather*}
and $A=\theta(\gamma)^{-1}\Res_{u=0}\theta(u)^{-1}$.
\end{lem}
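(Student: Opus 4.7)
The plan is to evaluate the iterated residue separately on $\mathrm{LHS}_{n,M,N}$ and on $\mathrm{RHS}_{n,M,N}$ and show that each equals $A^n P$ times the corresponding side of $\Phi_{n,M-1,N-1}(\bu'_1,\dots,\bu'_n|\vartheta'_1,\vartheta'_2)$, where $P$ denotes the explicit prefactor stated in the lemma. Subtracting the two identities then yields the claim.

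First I would identify the surviving partitions. On $\mathrm{LHS}_{n,M,N}$ the pole at $u_{i,1}=u_{i,2}+\gamma$ comes only from the denominator factor $\theta(u_{i,a}-u_{i,b}-\gamma)$ at $(a,b)=(1,2)$, so only partitions with $1\in I_i$ and $2\in J_i$ for every $i$ contribute; writing $I_i=\{1\}\sqcup I'_i$, $J_i=\{2\}\sqcup J'_i$ gives $I'_i\sqcup J'_i=\{3,\dots,M+N\}$ with sizes $M-1,\,N-1$, which is exactly the index set for $\mathrm{LHS}_{n,M-1,N-1}$ in the reduced variables $\bu'_i$. Analogously, on $\mathrm{RHS}_{n,M,N}$ the roles of $1$ and $2$ are swapped: only partitions with $2\in I_i$, $1\in J_i$ contribute, and the reduced data $I''_i\sqcup J''_i=\{3,\dots,M+N\}$ indexes $\mathrm{RHS}_{n,M-1,N-1}$. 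Each level contributes a residue factor $1/(\theta(\gamma)\theta'(0))=A$ coming from $\theta(u_{i,1}-u_{i,2})\theta(u_{i,1}-u_{i,2}-\gamma)$ in the denominator, giving the total $A^n$. The arguments of $\vartheta_1,\vartheta_2$ collapse to $\vartheta'_1(\ovu'_{1,I'_1},\dots)$ and $\vartheta'_2(\ovu'_{1,J'_1},\dots)$ after using Lemma~\ref{Theta Lem1} to absorb the uniform extra shift by $\gamma$ produced by $u_{i,1}=u_{i,2}+\gamma$.

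The heart of the argument is to verify that the remaining theta factors split as the integrand of $\Phi_{n,M-1,N-1}$ times $P$. The factors with both indices in the reduced set already have the right form. The denominator "extras" (pairings of $1$ with $J'_i$ or of $I'_i$ with $2$) convert, via the oddness $\theta(-u)=-\theta(u)$ (itself a consequence of $\Theta_p(z^{-1})=-z^{-1}\Theta_p(z)$), into $\prod_i\prod_{c\in I'_i\cup J'_i}\theta(u_{i,c}-u_{i,2})\theta(u_{i,c}-u_{i,2}-\gamma)$, which by $I'_i\cup J'_i=\{3,\dots,M+N\}$ is the denominator of $P$. The numerator "extras" reorganize because the substitution $u_{i,1}=u_{i,2}+\gamma$ converts $\theta(u_{i,2}-u_{i+1,1}-\alpha+\gamma)$ into $\theta(u_{i,2}-u_{i+1,2}-\alpha)$ and $\theta(u_{i,1}-u_{i+1,b}-\alpha)$ into $\theta(u_{i,2}-u_{i+1,b}-\alpha+\gamma)$, so the factors paired with index $1$ in one numerator product merge with those paired with index $2$ in the other, and using $I'_{i+1}\cup J'_{i+1}=\{3,\dots,M+N\}$ the total collapses into $\prod_i\prod_{a=2}^{M+N}\theta(u_{i,a}-u_{i+1,2}-\alpha)\theta(u_{i,2}-u_{i+1,a}-\alpha+\gamma)$, the numerator of $P$. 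The identical analysis applies to $\mathrm{RHS}_{n,M,N}$ with $1$ and $2$ swapped and the two numerator products interchanged, yielding the same $A^nP$.

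The main obstacle is purely the bookkeeping of the six families of numerator "extras" and two families of denominator "extras" per color, together with the sign tracking via $\theta(-u)=-\theta(u)$. The conceptually crucial observation is that the specialization $u_{i,1}-u_{i,2}=\gamma$ is precisely the shift by $\gamma$ that distinguishes $I$-paired from $J$-paired factors in the numerators, which is exactly what allows the partition-dependent $I'$ and $J'$ extras to merge into partition-independent products over $\{3,\dots,M+N\}$ and thereby produce the same prefactor $P$ from both the LHS and RHS residues.
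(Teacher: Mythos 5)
Your proposal is correct and follows the same route as the paper's proof: identify the surviving partitions ($1\in I_i$, $2\in J_i$ for the LHS and the swapped condition for the RHS), use Lemma \ref{Theta Lem1} to absorb the uniform $\gamma$-shift in the arguments of $\vartheta_s$, and then verify by direct bookkeeping that the leftover theta factors assemble into $A^n$ times the stated partition-independent prefactor. The paper dismisses that last step as "a simple computation," and your detailed accounting of the numerator and denominator extras (including the sign cancellation from the oddness of $\theta$) correctly supplies it.
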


\begin{proof}
In the sum ${\rm LHS}_{n,M,N}$, the iterated residue is non-zero only for terms such that
$1\in I_i$ and $2\in J_i$ for all $i=1,\dots,n$.
Write $I_i=\{1\}\sqcup \bar I_i$ and $J_i=\{2\}\sqcup \bar J_i$.
Similarly, for ${\rm RHS}_{n,M,N}$ one must have
$2\in I_i$ and $1\in J_i$ for $i=1,\dots,n$. Write $I_i=\{2\}\sqcup \bar I_i$ and $J_i=\{1\}\sqcup \bar J_i$ in that case.

In either case, the functions $\vartheta_s$ become
\begin{gather*}
\vartheta_1(u_{1,2}+\bar u_{1,\bar I_1},\dots, u_{n,2}+\bar u_{n,\bar I_n})
\vartheta_2(u_{1,2}+\bar u_{1,\bar J_1},\dots, u_{n,2}+\bar u_{n,\bar J_n})
\end{gather*}
due to Lemma \ref{Theta Lem1}.

The rest is a simple computation.
\end{proof}

%\noindent{\it Remark.}\quad
\begin{rem}\label{rem:gap}
Lemma \ref{lem:it-res} is proved in \cite{KS}. However the rest of the
argument necessary for completing the proof of Theorem \ref{thm:KSidentity} is missing there.
\end{rem}

\begin{lem}\label{lem:special}
Fix $i\in\{1,\dots,n\}$. Then the following hold:
\begin{gather*}
\Phi_{n,M,N}(\bu_1,\dots,\bu_n|\vartheta_1,\vartheta_2)
\bigl|_{\substack{u_{i+1,a}=u_{i,a}-\alpha\\ 1\le a\le M+N}}
\\ \qquad
{}=(-1)^{MN}\Phi_{n-1,M,N}(\bu_1,\dots,\bu_{i},\bu_{i+2},\dots,\bu_n|\vartheta''_1,\vartheta''_2),
\\
\Phi_{n,M,N}(\bu_1,\dots,\bu_n|\vartheta_1,\vartheta_2)
\bigl|_{\substack{u_{i+1,a}=u_{i,a}-\alpha+\gamma\\ 1\le a\le M+N}}
\\ \qquad
{}=(-1)^{MN}\Phi_{n-1,M,N}(\bu_1,\dots,\bu_{i},\bu_{i+2},\dots,\bu_n|\vartheta'''_1,\vartheta'''_2),
\end{gather*}
where
\begin{gather*}
\vartheta''_1(v_1,\dots,v_{n-1})=\vartheta_1(v_1,\dots,v_i,v_i-M\alpha,v_{i+1},\dots,v_{n-1}),
\\
\vartheta''_2(v_1,\dots,v_{n-1})=\vartheta_2(v_1,\dots,v_i,v_i-N\alpha,v_{i+1},\dots,v_{n-1}),
\end{gather*}
and $\vartheta'''_s$ is given by replacing $\alpha$ by $\alpha-\gamma$ in $\vartheta''_s$.
\end{lem}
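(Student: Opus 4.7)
The plan is to specialize directly in $\Phi_{n,M,N}$ and match the result term-by-term with $(-1)^{MN}\Phi_{n-1,M,N}(\vartheta''_1,\vartheta''_2)$. I focus on the first identity; the second follows by the parallel argument with $\alpha$ replaced by $\alpha-\gamma$.

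First, I would isolate the surviving partitions. Under $u_{i+1,a}=u_{i,a}-\alpha$, the $i$-th factor of the LHS numerator in \eqref{main-idenity} contains the product $\prod_{a\in I_i,b\in J_{i+1}}\theta(u_{i,a}-u_{i+1,b}-\alpha)=\prod\theta(u_{i,a}-u_{i,b})$, which vanishes unless $I_i\cap J_{i+1}=\emptyset$. Combined with $|I_i|=|I_{i+1}|=M$, this forces $I_{i+1}=I_i$ and hence $J_{i+1}=J_i$. The RHS admits the same reduction via the analogous vanishing of $\theta(u_{i,b}-u_{i+1,a}-\alpha)$.

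Next, I would evaluate the $i$-th factor on the surviving partitions. By the oddness $\theta(-x)=-\theta(x)$, on both sides this factor simplifies to $(-1)^{MN}$, since the numerator $\prod\theta(u_{i,a}-u_{i,b})\cdot\prod\theta(u_{i,b}-u_{i,a}+\gamma)$ cancels against the denominator $\prod\theta(u_{i,a}-u_{i,b})\theta(u_{i,a}-u_{i,b}-\gamma)$ up to the sign $(-1)^{MN}$. Meanwhile, since $\ovu_{i+1,I_i}=\ovu_{i,I_i}-M\alpha$ and $\ovu_{i+1,J_i}=\ovu_{i,J_i}-N\alpha$, Lemma~\ref{Theta Lem1} turns $\vartheta_1$ into $\vartheta''_1$ and $\vartheta_2$ into $\vartheta''_2$ as defined. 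The summation then runs over independent $(I_j,J_j)$ for $j\in\{1,\dots,i,i+2,\dots,n\}$, matching the summation domain for $\Phi_{n-1,M,N}$.

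The main computational obstacle is then the identification of the remaining theta factors with those of $\Phi_{n-1,M,N}(\vartheta''_1,\vartheta''_2)$. Factors at positions $j\ne i,i+1$ are unchanged and match directly. The $(i+1)$-th factor, under the substitution, becomes the ``bridging'' factor between $\bu_i$ and $\bu_{i+2}$, but carries shifts $-2\alpha$ and $-2\alpha+\gamma$ rather than the $-\alpha$ and $-\alpha+\gamma$ appearing in $\Phi_{n-1}$. This apparent discrepancy is absorbed by the modified quasi-periodicity of $\vartheta''_1$ and $\vartheta''_2$: a direct check using \eqref{theta-per1}--\eqref{theta-per2} shows that $\vartheta''_1\in X_{n-1}(\bar\mu)$ with $\bar\mu_i=\mu_i+\mu_{i+1}+M\alpha$, $\bar\mu_{i+1}=\mu_{i+2}-M\alpha$ (and analogously for $\vartheta''_2$ with $N\alpha$); this shift in the quasi-periodicity data precisely compensates the $\alpha$ excess in the bridging factor upon rearrangement of the cyclic product, completing the verification.
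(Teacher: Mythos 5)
Your proposal follows the paper's own route for the main steps, and those steps are all correct: under $u_{i+1,a}=u_{i,a}-\alpha$ the factor $\prod_{a\in I_i,\,b\in J_{i+1}}\theta(u_{i,a}-u_{i+1,b}-\alpha)=\prod\theta(u_{i,a}-u_{i,b})$ forces $I_{i+1}=I_i$, $J_{i+1}=J_i$ (and similarly on the right-hand side of \eqref{main-idenity}); the surviving $i$-th numerator equals $(-1)^{MN}$ times the $(i+1)$-th denominator by oddness of $\theta$; and the $\vartheta$-factors become $\vartheta''_1,\vartheta''_2$ by direct substitution (Lemma~\ref{Theta Lem1} is not actually needed for that). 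You have also correctly spotted the one point that the paper's proof passes over in silence: after eliminating row $i+1$, the factor coupling rows $i$ and $i+2$ reads $\theta(u_{i,a}-u_{i+2,b}-2\alpha)$ and $\theta(u_{i,b}-u_{i+2,a}-2\alpha+\gamma)$, whereas the corresponding bridge in $\Phi_{n-1,M,N}$ carries $\alpha$.

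Your resolution of this discrepancy, however, is not an argument. The quasi-periodicity data $\bar\mu$ of $\vartheta''_1,\vartheta''_2$ (which you compute correctly) only determines which space $X_{n-1}(\bar\mu)$ these functions belong to; it is an intrinsic property of the two functions and cannot alter the explicit theta factors $\theta(u_{i,a}-u_{i+2,b}-\alpha)$ that appear in the \emph{definition} of $\Phi_{n-1,M,N}$. There is no ``rearrangement of the cyclic product'' that converts $\theta(x-2\alpha)$ into $\theta(x-\alpha)$. A change of variables $u_{j,a}\mapsto u_{j,a}+c_j$ cannot do it either: such a shift changes the parameter of the bridge $(j,j+1)$ by $c_j-c_{j+1}$, so the sum of bridge parameters around the cycle is invariant --- it equals $n\alpha$ for the specialized expression but $(n-1)\alpha$ for $\Phi_{n-1,M,N}$. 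What your computation actually establishes is that the specialization equals $(-1)^{MN}$ times a \emph{modified} $\Phi_{n-1,M,N}$ in which a single bridge carries $2\alpha$ (equivalently, after redistributing via a change of variables, a uniform $\Phi_{n-1,M,N}$ with $\alpha$ replaced by $n\alpha/(n-1)$ and the $\vartheta''_s$ further shifted, each $\vartheta''_s$ then landing in its own space $X_{n-1}$). For the induction in Theorem~\ref{thm:KSidentity} this is essentially as good, since the $(n-1)$-identity is invoked for all values of $\alpha$; but as a verification of the lemma as literally stated, your last step is a gap, and it should be replaced by an honest description of what the right-hand side actually is. (The paper's own proof simply says ``thereby reducing $n$ to $n-1$'' and does not address this point at all, so you noticed a genuine subtlety --- the proposed fix just does not work.)
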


\begin{proof}
Since the calculations are similar, we consider the first case.
Under this specialization, terms in ${\rm LHS}_{n,M,N}$ survive only when
$I_i\cap J_{i+1}=\varnothing$, which implies $I_i=I_{i+1}$ and $J_i=J_{i+1}$.
Then the factor
$\prod_{\substack{a\in I_i\\ b\in J_{i+1}}}\theta(u_{i,a}-u_{i+1,b}-\alpha)
\prod_{\substack{a\in I_{i+1}\\ b\in J_i}}\theta(u_{i,b}-u_{i+1,a}-\alpha+\gamma)$
from the numerator cancels
$\prod_{\substack{a\in I_{i+1}\\ b\in J_{i+1}}}\theta(u_{i+1,a}-u_{i+1,b})\theta(u_{i+1,a}-u_{i+1,b}-\gamma)$
from the denominator, thereby reducing~$n$ to $n-1$.
For ${\rm RHS}_{n,M,N}$ the situation is entirely similar.
\end{proof}

\begin{lem}[\cite{Ru}]
\label{lem:n=1}
The identity \eqref{main-idenity} holds for $n=1$.
\end{lem}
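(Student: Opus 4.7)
The plan is to proceed by induction on $\min(M, N)$, following Ruijsenaars \cite{Ru}. Since $X_1 = \C$ I will take $\vartheta_1 = \vartheta_2 = 1$ throughout. The base case $\min(M, N) = 0$ is immediate: if, say, $M = 0$, then $I_1 = \varnothing$ is forced, the product over $a \in I_1$, $b \in J_1$ is empty, and both sides of \eqref{main-idenity} collapse to $1$, giving $\Phi_{1, M, N} = 0$.

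For the inductive step I would first apply Lemma \ref{lem:it-res} with $n = 1$: the residue of $\Phi_{1, M, N}$ at $u_{1,1} = u_{1,2} + \gamma$ equals an explicit nonvanishing prefactor times $\Phi_{1, M-1, N-1}$, which vanishes by the inductive hypothesis. Both sides of \eqref{main-idenity} are manifestly $\mathfrak{S}_{M+N}$-symmetric in $(u_{1,a})_{a=1}^{M+N}$ (after performing the sum over partitions $I_1 \sqcup J_1$), so the residues at every $u_{1,a} = u_{1,b} + \gamma$ vanish. The quasi-periodicity lemma preceding Lemma \ref{lem:it-res} identifies these as the only candidate poles modulo $\Z \oplus \Z \tau$, so $\Phi_{1, M, N}$ is entire in each $u_{1,a}$. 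That same lemma asserts genuine double-periodicity with periods $1$ and $\tau$ in the case $n = 1$, so by Liouville's theorem $\Phi_{1, M, N}(u_{1,1}, \ldots, u_{1,M+N}; \alpha)$ is a constant $F(\alpha)$ in the $u$-variables.

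The principal obstacle is then to show $F(\alpha) \equiv 0$. A direct check using the quasi-periodicity of $\theta$ yields $F(\alpha + 1) = F(\alpha)$ and $F(\alpha + \tau) = e^{2 \pi i M N (\gamma - 2 \alpha - \tau)} F(\alpha)$, so $F$ is a theta function of degree $2MN$ in $\alpha$. Two zeros are easily located: at $\alpha = 0$ and at $\alpha = \gamma$, the simplification $\theta(-x) = -\theta(x)$ collapses each summand on both sides to $(-1)^{MN}$, so LHS and RHS both evaluate to the common value $(-1)^{MN}\binom{M+N}{M}$, whence $F(0) = F(\gamma) = 0$. For $MN \le 1$ this already exhausts the degree of $F$ and forces $F \equiv 0$; in general one must still produce the remaining $2MN - 2$ zeros in a fundamental domain. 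Ruijsenaars accomplishes this in \cite{Ru} by a further residue-and-asymptotics argument (equivalently, by reducing to the trigonometric limit $p \to 0$ where the identity can be verified directly), and this is the deep step of the proof. This closes the induction and establishes Lemma \ref{lem:n=1}.
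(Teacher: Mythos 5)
Your proof follows the paper's route for its first two stages: the induction on $\min(M,N)$, the use of Lemma \ref{lem:it-res} plus $\mathfrak{S}_{M+N}$-symmetry to kill the residues at $u_a=u_b+\gamma$, and the conclusion via double periodicity and Liouville that $\Phi_{1,M,N}$ is a constant in the $u$-variables. Up to that point the argument is correct and coincides with the paper's.

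The final step, however, has a genuine gap. You reduce the problem to showing that a theta function $F(\alpha)$ of degree $2MN$ vanishes identically, exhibit only the two zeros $\alpha=0$ and $\alpha=\gamma$, and then defer the remaining $2MN-2$ zeros to an unspecified ``residue-and-asymptotics argument'' attributed to \cite{Ru}. As written this proves the lemma only for $MN\le 1$; for general $M,N$ the decisive step is simply asserted, not carried out. The missing idea is that the constancy in the $u$-variables should be \emph{exploited} rather than discarded: since $\Phi_{1,M,N}$ does not depend on the $u_a$, one may evaluate it at the single point $u_a=a\alpha$, $a=1,\dots,M+N$. At that point $\theta(u_a-u_b-\alpha)=\theta((a-b-1)\alpha)$ vanishes whenever $a=b+1$, which forces $J$ to be upward closed; hence only the term $I=\{1,\dots,M\}$, $J=\{M+1,\dots,M+N\}$ survives in ${\rm LHS}_{1,M,N}$, and only the complementary term survives in ${\rm RHS}_{1,M,N}$. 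The two surviving products telescope to the common value
\begin{gather*}
(-1)^{MN}\,
\frac{\prod_{a=1}^M\theta((a-1)\alpha+\gamma)/\theta(a\alpha)\,\prod_{a=1}^N\theta((a-1)\alpha+\gamma)/\theta(a\alpha)}
{\prod_{a=1}^{M+N}\theta((a-1)\alpha+\gamma)/\theta(a\alpha)},
\end{gather*}
so the constant is zero for all $\alpha$ at once, with no need to count zeros of $F(\alpha)$. Replacing your last paragraph by this specialization closes the argument.
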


\begin{proof}
For $n=1$ we may assume $\vartheta_1=\vartheta_2=1$, so the identity takes the form
\begin{gather*}
\sum_{I,J}
\prod_{\substack{a\in I\\ b\in J}}
\frac{\theta(u_{a}-u_{b}-\alpha)\theta(u_{b}-u_{a}-\alpha+\gamma)}
{\theta(u_{a}-u_{b})\theta(u_{a}-u_{b}-\gamma)}
=\sum_{I,J}\prod_{\substack{a\in I\\ b\in J}}
\frac{\theta(u_{b}-u_{a}-\alpha)\theta(u_{a}-u_{b}-\alpha+\gamma)}
{\theta(u_{b}-u_{a})\theta(u_{b}-u_{a}-\gamma)}.
\end{gather*}
Lemma \ref{lem:it-res} together with the symmetry shows that
$\Phi_{1,M,N}$ has no pole at $u_a=u_b+\gamma$. Hence $\Phi_{1,M,N}$ is an elliptic function without poles, therefore it is a constant.
Upon specializing $u_a=a\alpha$, $a=1,\dots,M+N$, we obtain
\begin{align*}
{\rm LHS}_{1,M,N}&=(-1)^{MN}
\frac{\prod_{a=1}^M\theta((a-1)\alpha+\gamma)/\theta(a\alpha)\prod_{a=1}^N\theta((a-1)\alpha+\gamma)/\theta(a\alpha)}
{\prod_{a=1}^{M+N}\theta((a-1)\alpha+\gamma)/\theta(a\alpha)}
\\
&={\rm RHS}_{1,M,N},
\end{align*}
thereby proving $\Phi_{1,M,N}=0$.
\end{proof}

\begin{proof}[Proof of Theorem \ref{thm:KSidentity}]
We use induction on $n$. The base of induction is proved in Lem\-ma~\ref{lem:n=1}.
Assume that theorem is true for $n-1$.

For $i=1,\dots,n$ we set
\begin{gather*}
Z_i=\Res_{u_{i,2}=u_{i,1}-\gamma}\Res_{u_{i+1,2}=u_{i+1,1}-\gamma}
\cdots \Res_{u_{n,2}=u_{n,1}-\gamma}\, \Phi_{n,M,N}(\bu_1,\dots,\bu_n,\alpha,\gamma|\vartheta_1,\vartheta_2).
\end{gather*}
We show $Z_i=0$ by induction on $i$.
By Lemma \ref{lem:it-res} and the induction hypothesis $\Phi_{n-1,M,N}=0$, we have $Z_1=0$.
Suppose $Z_i=\Res_{u_{i,2}=u_{i,1}-\gamma}Z_{i+1}=0$. Then $Z_{i+1}$ has no poles in $\bu_i$.

Since $u_{i+1,2}=u_{i+1,1}-\gamma$, non-zero terms in $Z_{i+1}$ are such that
$1\in I_{i+1}$, $2\in J_{i+1}$ in {\rm LHS}. Then in the numerator we have factors
$\theta(u_{i,a}-u_{i+1,2}-\alpha)$, $a\in I_i$, and
$\theta(u_{i,b}-u_{i+1,1}-\alpha+\gamma)$, $b\in J_i$. Therefore each term has a factor
\begin{gather*}
\prod_{a=1}^{M+N}\theta(u_{i,a}-u_{i+1,2}-\alpha).
\end{gather*}
By the same argument, {\rm RHS}, and hence $Z_{i+1}$, is shown to have the same factor.
If $Z_{i+1}\neq 0$, this is a contradiction because the quasi-periodicity property of $Z_{i+1}$ implies that
it is a function of $\bar u_i=\sum_{a=1}^{M+N}u_{i,a}$, see Lemma~\ref{lem:sum}.
Hence we have $Z_{i+1}=0$.

It follows that $\Phi_{n,M,N}$ has no poles at $u_{n,a}=u_{n,b}-\gamma$.
Therefore, if non-zero, it is a function of the sum $\bar u_n=\sum_{a=1}^{M+N}u_{n,a}$.
On the other hand, by Lemma \ref{lem:special} and the induction hypothesis,
$\Phi_{n,M,N}$ has the following zeros with respect to $\bar u_{n}$:
\begin{gather*}
\bar u_{n}=\bar u_{1}+(M+N)\alpha,\qquad \bar u_{n}=\bar u_{1}+(M+N)(\alpha-\gamma),
\\
\bar u_{n}=\bar u_{n-1}-(M+N)\alpha,\qquad \bar u_{n}=\bar u_{n-1}-(M+N)(\alpha-\gamma).
\end{gather*}
From the quasi-periodicity, if $\Phi_{n,M,N}\neq0$, then it can have only two zeros modulo $\Z\oplus\Z\tau$,
which contradicts to the above zeros.

This completes the proof of Theorem \ref{thm:KSidentity}.
\end{proof}

\subsection*{Acknowledgments}

The study has been funded within the framework of the HSE University Basic Research Program.
MJ is partially supported by JSPS KAKENHI Grant Number JP19K03549.
EM is partially supported by grants from the Simons Foundation \#353831 and \#709444.

\pdfbookmark[1]{References}{ref}
\LastPageEnding

\end{document}